\tikzset{
	>=stealth',
	punktchain/.style={
		rectangle,
		rounded corners,
		draw=black, thick,
		minimum height=3em,
		text centered,
		on chain},
	line/.style={draw, thick, <-},
	element/.style={
		tape,
		top color=white,
		bottom color=blue!50!black!60!,
		minimum width=8em,
		draw=blue!40!black!90, very thick,
		text width=10em,
		minimum height=3.5em,
		text centered,
		on chain},
	every join/.style={->, thick,shorten >=1pt},
	decoration={brace},
	tuborg/.style={decorate},
	tubnode/.style={midway, right=2pt},
}
\numberwithin{equation}{section} 
\def\C{\ensuremath{\mathbb{C}}}
\def\P{\ensuremath{\mathbb{P}}}
\def\Q{\ensuremath{\mathbb{Q}}}
\def\R{\ensuremath{\mathbb{R}}}
\def\Z{\ensuremath{\mathbb{Z}}}
\def\Amp{\mathop{\mathrm{Amp}}\nolimits}
\def\ch{\mathop{\mathrm{ch}}\nolimits}
\def\Coh{\mathop{\mathrm{Coh}}\nolimits}
\def\dim{\mathop{\mathrm{dim}}\nolimits}
\def\ev{\mathop{\mathsf{ev}}\nolimits}
\def\Eff{\mathop{\mathrm{Eff}}}
\def\Ext{\mathop{\mathrm{Ext}}\nolimits}
\def\Hom{\mathop{\mathrm{Hom}}\nolimits}
\def\id{\mathop{\mathsf{id}}\nolimits}
\def\mod{\mathop{\mathrm{mod}}\nolimits}
\def\min{\mathop{\mathrm{min}}\nolimits}
\def\Nef{\mathop{\mathrm{Nef}}\nolimits}
\def\NS{\mathop{\mathrm{NS}}\nolimits}
\def\Pic{\mathop{\mathrm{Pic}}}
\def\rk{\mathop{\mathrm{rk}}}
\def\Spec{\mathop{\mathrm{Spec}}}
\def\Db{\mathrm{D}^{b}}
\def\Cone{\mathrm{Cone}}
\newtheorem*{rep@theorem}{\rep@title}
\newcommand{\newreptheorem}[2]{%
\newenvironment{rep#1}[1]{%
 \def\rep@title{#2 \ref{##1}}%
 \begin{rep@theorem}}%
 {\end{rep@theorem}}}
\newtheorem{Thm}{Theorem}[section]
\newtheorem{Prop}[Thm]{Proposition}
\newtheorem{PropDef}[Thm]{Proposition and Definition}
\newtheorem{Lem}[Thm]{Lemma}
\newtheorem{Cor}[Thm]{Corollary}
\newtheorem{Ques}[Thm]{Question}
\newtheorem{thm-int}{Theorem}
\theoremstyle{definition}
\newtheorem{Def-s}[Thm]{Definition}
\newtheorem{Def}[Thm]{Definition}
\newtheorem{Rem}[Thm]{Remark}
\newtheorem{Ex}[Thm]{Example}
\newtheorem{Not}[Thm]{Notation}
\def\C{\ensuremath{\mathbb{C}}}
\def\P{\ensuremath{\mathbb{P}}}
\def\Q{\ensuremath{\mathbb{Q}}}
\def\R{\ensuremath{\mathbb{R}}}
\def\Z{\ensuremath{\mathbb{Z}}}
\def\cA{\ensuremath{\mathcal A}}
\def\cI{\ensuremath{\mathcal I}}
\def\cL{\ensuremath{\mathcal L}}
\def\cO{\ensuremath{\mathcal O}}
\def\cW{\ensuremath{\mathcal W}}
\def\iff{\; \Longleftrightarrow \;}
\def\RHom{\mathrm{RHom}}
\def\Nef{\mathrm{Nef}}
\def\Peff{\overline{\mathrm{NE}}}
\def\NS{\mathrm{NS}}
\def\Amp{\mathrm{Amp}}
\def\cud{\mathbf{d}}
\def\cuh{\mathbf{h}}
\def\cuc{\mathbf{c}}
\begin{document}

\title[Spherical bundles]{A note on spherical bundles on K3 surfaces}

\author{Chunyi Li}
\address{C. L.:
Mathematics Institute, University of Warwick,
Coventry, CV4 7AL,
United Kingdom}
\email{C.Li.25@warwick.ac.uk}
\urladdr{https://sites.google.com/site/chunyili0401/}

\author{Shengxuan Liu}
\address{S. L.:
Mathematics Institute, University of Warwick,
Coventry, CV4 7AL,
United Kingdom}
\email{Shengxuan.Liu.1@warwick.ac.uk}
\urladdr{https://warwick.ac.uk/fac/sci/maths/people/staff/sliu/}

\address{G.O.:
Graduate School of Mathematics, Nagoya University, 
Furocho, Chikusaku, Nagoya, 464-8602, Japan}
\email{genki.ouchi@math.nagoya-u.ac.jp}
\urladdr{https://www.math.nagoya-u.ac.jp/en/people/faculty-05.html}

\keywords{}

\begin{abstract} 

Some questions are posted at the end of \cite[Chapter 16]{HuybrechtsK3book}, concerning the bounded derived category of a K3 surface $\Db(S)$. Let $E$ be a spherical object in $\Db(S)$. The first question asks if there always exists a non-zero object $F$ satisfying $\RHom(E,F)=0$. Further, let $E$ be a spherical bundle. The second question is whether $E$ is always semistable with respect to some polarization on $S$ and if there is a way to `count'  spherical bundles with a fixed Mukai vector. In this note, we provide (partial) answers to these two questions. In the appendix, Genki Ouchi shows that any spherical twist associated to an $n$-spherical object on a smooth projective $n$-dimensional variety is not conjugate to a standard autoequivalence.
\end{abstract}

\maketitle

\setcounter{tocdepth}{1}
\tableofcontents

\section{Introduction}
Let $S$ be a smooth K3 surface and $E$ be a spherical object in $\Db(S)$, in other words,
\begin{align*}
    \dim\Hom(E,E[i])=\begin{cases}
        1 & \text{ when }i=0,2;\\
        0 & \text{ otherwise. }
    \end{cases}
\end{align*}
The first question at the end of \cite[Chapter 16]{HuybrechtsK3book} asks whether $E^\perp\neq 0$, in other words, does there exist a non-trivial object $F$ such that $\Hom(E,F[i])=0$ for all $i\in \Z$.

In the case that the K3 surface $S$ is of Picard number one, the question is answered in \cite[Apendix A]{genkientropy} by Bayer using tools and results from \cite{K3Pic1}.  Here, we provide an affirmative answer to this question for all K3 surfaces and some more general cases.

\begin{Prop}[{Proposition \ref{prop:sphericalorth}}]\label{thm:mainEperp}
    Let $X$ be a smooth proper $n$-dimensional variety over $\C$ and $E$ be an $n$-spherical object in $\Db(X)$. Then there exists non-trivial object $F$ orthogonal to $E$, in other words, $\Hom(E,F[k])=\Hom(F,E[k])=0$ for every $k\in \Z$. 
\end{Prop}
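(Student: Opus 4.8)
The plan is to split on the support of $E$ and to reduce the two-sided orthogonality to a single right-orthogonality via Serre duality. First I would dispose of the case $\supp(E)\neq X$: choosing a closed point $x\in X\setminus\supp(E)$ and setting $F=\cO_x$, both $\RHom(E,\cO_x)$ and $\RHom(\cO_x,E)$ vanish for support reasons, so $F$ is orthogonal to $E$. Hence from now on I may assume $\supp(E)=X$, which is the essential case (for instance when $E$ is a spherical bundle).

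Since $X$ is smooth and proper it carries a Serre functor $S_X=(-)\otimes\omega_X[n]$, and Serre duality gives $\RHom(F,E)\cong\RHom(E,S_X F)^\vee$. Thus $F$ is orthogonal to $E$ if and only if $\RHom(E,F)=0$ and $\RHom(S_X^{-1}E,F)=0$, i.e. $F$ lies in the right orthogonal of $G:=E\oplus S_X^{-1}E$; on a Calabi--Yau variety (in particular a K3 surface) $S_X=[n]$ and this is just the condition $\RHom(E,F)=0$. It therefore suffices to produce a nonzero $F$ in $G^\perp$. For the numerical shadow I would note that the Euler form $\chi$ on $\Kn(X)$ is non-degenerate and $\rk\Kn(X)\ge 3$ once $\dim X\ge 2$ (for $\dim X=1$ the surviving full-support case has $\chi(E,E)=\rk(E)^2(1-g)=0$ with $\rk(E)\ge 1$, forcing $g=1$, hence $X$ Calabi--Yau), so the two conditions $\chi(E,-)=\chi(-,E)=0$ still leave a nonzero class $w$. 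The real content is then to realise $w$ by an object for which every $\Hom(E,F[k])$ and $\Hom(F,E[k])$ vanishes, not merely their alternating sum.

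This realisation is where the difficulty concentrates, precisely because $E$ is spherical and not exceptional: $\RHom(E,E)=\C\oplus\C[-n]$, the subcategory generated by $E$ is proper but not admissible, and the twist only shifts, $\RHom(E,T_E F)=\RHom(E,F)[1-n]$, rather than annihilating. On a K3 surface I would resolve it through Bridgeland stability: pick $\sigma$ for which $E$ is $\sigma$-stable of phase $\phi$, and use the existence theory for moduli of stable objects to find a $\sigma$-stable $F\not\cong E$ of the same phase $\phi$ whose vector $v(F)$ is Mukai-orthogonal to $v(E)$. Stability then forces $\Hom(E,F[k])=0$ for $k\le 0$ and, through Serre duality, for $k\ge 2$, while $\langle v(E),v(F)\rangle=0$ gives $\chi(E,F)=0$ and hence kills the remaining $\Ext^1(E,F)$, so that $F$ is orthogonal to $E$. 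The main obstacle I anticipate is executing this last step in the full generality of the statement, where a stability condition need not be available, so that the passage from the numerical class $w$ to a genuinely totally-orthogonal object must instead be engineered by a direct homological construction rather than by stability.
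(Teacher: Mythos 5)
Your preliminary reductions are correct but they are not where the difficulty lies, and the route you propose for the essential case does not close the gap. Two remarks on the reductions first: the case $\supp(E)\neq X$ is indeed trivial, and for the Serre-duality reduction you do not need $X$ to be Calabi--Yau — the defining condition $\mathsf{S}_X(E)=E[n]$ of an $n$-spherical object (Definition \ref{def:spherical}) already gives $\Hom(F,E[k])\cong\Hom(E,F[n-k])^*$ on any $X$, so right orthogonality to $E$ alone suffices; this is why $E^\perp$ is defined in the paper by the single condition $\RHom(E,-)=0$. The genuine gap is the step you yourself flag as ``the real content''. Your K3 argument opens with ``pick $\sigma$ for which $E$ is $\sigma$-stable'', but the existence of such a stability condition for an arbitrary spherical object is not known: it is precisely Question \ref{ques:sphbristab} of this paper, open for Picard rank $\geq 2$ and settled only for Picard number one by \cite{K3Pic1} — which is exactly the case already proved by Bayer in the appendix to \cite{genkientropy}. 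So at best your stability argument reproduces the previously known special case, whereas the proposition is claimed for every smooth proper $X$. (Even granting stability of $E$, producing a $\sigma$-stable $F\not\cong E$ of \emph{exactly} the same phase with $\langle v(E),v(F)\rangle=0$ needs more work: one must find an integral class $w\perp v(E)$ with $Z(w)\in\R_{>0}\,Z(v(E))$, a real-codimension-one condition with no integral solutions for generic $\sigma$, so one has to deform $\sigma$ while keeping $E$ stable and staying off the walls for $w$.) For general $X$ you offer no construction at all, so the statement is not proved.

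For comparison, the paper's proof (Proposition \ref{prop:sphericalorth}) is exactly the ``direct homological construction'' you anticipated, and it uses nothing beyond semicontinuity, Serre duality and sphericity. By semicontinuity one chooses distinct closed points $p,q$ with $\RHom(E,\cO_p)\cong\RHom(E,\cO_q)$; write $P=\cO_p$, $Q=\cO_q$ and set $F:=E\otimes\RHom(E,P)\cong E\otimes(\RHom(Q[-n],E))^*$. The evaluation maps $\ev_1\colon Q[-n]\to F$ and $\ev_2\colon F\to P$ induce surjections on all $\Hom(-,E[i])$, respectively $\Hom(E,-[i])$. Setting $S'_Q:=\Cone(\ev_1)$, the first surjectivity gives $\hom(F,E[i])=\hom(Q[-n],E[i])+\hom(S'_Q,E[i])$, while sphericity of $E$ gives $\hom(F,E[i])=\hom(E,P[-i])+\hom(E,P[n-i])$; combining these with Serre duality yields $\hom(E,S'_Q[j])=\hom(E,P[j])$ for all $j$. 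Hence the surjection $\Hom(E,S'_Q[i])\to\Hom(E,P[i])$ induced by $S'_Q\to P$ is an isomorphism in every degree, and the object $K:=\Cone(S'_Q\to P)[-1]$ is a nonzero object with $\RHom(E,K)=0$, i.e.\ $K\in E^\perp$. This is the construction your proposal is missing, and it is what makes the result hold uniformly for all smooth proper varieties rather than only for K3 surfaces of Picard number one.
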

Proposition \ref{thm:mainEperp} has some immediate corollaries. For example, \cite[Theorem 3.1]{genkientropy} now holds for all spherical objects in $\Db(X)$ without any extra assumptions. Another corollary is pointed out with this question in \cite[Chapter 16]{HuybrechtsK3book}, the composition of spherical twist $\mathsf T^k_E$ is never a homological shift when $k\neq 0$. In fact, the autoequivalence $\mathsf{T}^k_E$ is not conjugate to a standard autoequivalence with respect to $X$ in $\mathrm{Aut}(\Db(X))$. We refer to Theorem \ref{thm:non-standard} in the Appendix for more details.

We may ask a consecutive question about $E^\perp$.
\begin{Ques}\label{ques:nodalsg}
    Let $S$ be a smooth projective surface and $E$ be a $2$-spherical object in $\Db(S)$. Denote by $\langle E^\perp, E\rangle$ the full sub-triangulated category generated by $E$ and objects in $E^\perp$.  Do we always have the equivalence \begin{align*}
        \Db(S)/\langle E^\perp,E\rangle\cong \mathrm{D}_{\mathrm{sing}}(\Spec(\C[x]/(x^2)))?
    \end{align*}
\end{Ques}
A few more details regarding this question will be discussed in Remark \ref{rem:sphericalperp}. Moving on to the next question, consider a smooth projective K3 surface $S$ with Picard number greater than $1$, and let $E$ be a spherical bundle on $S$. The second question at the end of \cite[Chapter 16]{HuybrechtsK3book} asks if $E$ is always Gieseker stable with respect to some polarization, and if there is a way to `count' spherical bundles on $S$ with a given Mukai vector. Definitions on stability and Mukai vector will be recapped at the beginning of Section \ref{sec3}.

Unlike the first question, the naive answer to the above question is negative. 
\begin{Ex}\label{eg:main}
 \begin{enumerate}
        \item We give an example of a K3 surface $S$ of Picard number $2$ and a spherical  bundle $F$ on $S$. The bundle $F$ is not slope semistable (in particular, not Gieseker semistable) with respect to any polarization of $S$.
        \item We give an example of a K3 surface $S$ of Picard number $3$ and a Mukai vector $v=(2,D,c)$ satisfying $\langle v,v\rangle=-2$. We give examples of infinitely many spherical bundles $\{E_i\}$ with  $v(E_i)=v$. Each bundle $E_i$ is slope stable (in particular, Gieseker stable) with respect to some different polarization.
    \end{enumerate}
\end{Ex}

More details of these examples will be given in Example \ref{eg:nonstablesphericalbundle} and \ref{eg:infinitelymanyspherical}. Here we point out that in Example \ref{eg:main} (b), the nef cone of the surface $S$ is circular. This is the main reason that the moduli space of a given numerical class may have infinitely many walls and chambers. Conversely, when the nef cone of the surface is rational polyhedral, this can never happen. More precisely, we have the following result in the more general case.

\begin{Prop}[{Proposition \ref{prop:finwalls}}]\label{thm:mainfinitewall}
    Let $S$ be a smooth projective surface over $\C$ with a rational polyhedral nef cone. Let  $v=(\rk,\ell,s)\in\tilde{\mathrm{H}}(S,\Z)$ with $\rk>0$ and $(\rk,\ell)$ being primitive. Then the moduli space $M^s(v)$ has only finitely many walls and chambers in $\mathrm{Amp}(S)$.
\end{Prop}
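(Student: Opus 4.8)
The plan is to show that the chamber structure is governed by a finite set of numerical hyperplanes, and to extract finiteness of that set from the rational polyhedrality of $\Nef(S)$ via the integrality of intersection numbers with the finitely many generators of its extremal rays. I first recall the standard wall-and-chamber formalism for Gieseker stability: as $H$ varies in $\Amp(S)$ the moduli space $M^s_H(v)$ is constant off a locally finite union of walls, and each wall lies in a hyperplane $\xi^{\perp}:=\{H\in\Amp(S):\xi\cdot H=0\}$ for some $\xi\in\NS(S)$. Concretely, if a sheaf $E$ with $v(E)=v$ becomes strictly semistable along the wall, I take a stable Jordan--Hölder factor $F$ and its complementary quotient $Q=E/F$; both are $\mu_H$-semistable of slope $\mu_H(E)$. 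Since $E$ is torsion-free there are no torsion subsheaves, and a rank-$\rk$ subsheaf has strictly smaller reduced Hilbert polynomial once the slopes agree, so it never destabilizes; hence the ranks satisfy $0<r'<\rk$ with $r''=\rk-r'$, and the wall is cut out by $\xi=\rk\,\ch_1(F)-r'\,\ch_1(E)=r''\ch_1(F)-r'\ch_1(Q)$.

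Next I bound $\xi^2$ in a window depending only on $v$. Since $\xi\cdot H=0$ for an ample $H$ on the wall and $\xi\neq0$, the Hodge index theorem gives $\xi^2<0$. For the lower bound I use the discriminant identity
\[
\frac{\xi^2}{r'r''\rk}=\frac{\Delta(F)}{r'}+\frac{\Delta(Q)}{r''}-\frac{\Delta(E)}{\rk},\qquad \Delta(\cdot)=\ch_1^2-2\,\rk\cdot\ch_2,
\]
which follows from additivity of $\ch_1,\ch_2$ together with $\xi=r''\ch_1(F)-r'\ch_1(Q)$. Bogomolov's inequality $\Delta(F),\Delta(Q)\ge0$ then yields $\xi^2\ge-r'r''\Delta(v)\ge-\tfrac{\rk^2}{4}\Delta(v)=:-C$, a constant determined by $v$. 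Thus every wall lies in some $\xi^{\perp}$ with $-C\le\xi^2<0$ and $\xi^{\perp}\cap\Amp(S)\neq\emptyset$, and it remains to show that only finitely many such $\xi$ occur.

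The heart of the argument, and the step where polyhedrality is indispensable, is the following finiteness claim: for fixed $C$, the set of $\xi\in\NS(S)$ with $-C\le\xi^2<0$ and $\xi^{\perp}\cap\Amp(S)\neq\emptyset$ is finite. Let $A_1,\dots,A_k$ be primitive integral generators of the extremal rays of $\Nef(S)$. Because these span $\NS(S)_{\R}$ and $\xi\neq0$, the hyperplane $\xi^{\perp}$ meeting the interior forces $\xi\cdot A_i\ge1$ and $\xi\cdot A_j\le-1$ for some $i,j$ (integrality of the pairings). Set $P=\langle A_i,A_j\rangle_{\R}$. As $A_i,A_j$ are non-proportional nef classes, Hodge index gives $A_i^2A_j^2<(A_i\cdot A_j)^2$, so $P$ is nondegenerate of signature $(1,1)$ and $\NS(S)_{\R}=P\oplus P^{\perp}$ orthogonally with $P^{\perp}$ negative definite. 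Writing $\xi=\xi_P+\xi'$, the relations $\xi^2\ge-C$, $\xi_P^2\le0$, $\xi'^2\le0$ put both $\xi_P^2,\xi'^2$ in $[-C,0]$; negative definiteness of $P^{\perp}$ then bounds $\xi'$. For $\xi_P$ I set $p=\xi\cdot A_i\ge1$, $m=\xi\cdot A_j\le-1$ (integers) and compute, with $G$ the Gram matrix of $(A_i,A_j)$, that $B:=A_j^2p^2-2(A_i\cdot A_j)pm+A_i^2m^2=(\det G)\,\xi_P^2$ lies in $[0,\,C|\det G|]$; since $p>0>m$ all three terms of $B$ are non-negative, so $2(A_i\cdot A_j)\,|p|\,|m|\le B\le C|\det G|$, giving $|p|\,|m|\le C'$ for a constant $C'$. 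Hence only finitely many integer pairs $(p,m)$, so finitely many $\xi_P$, and thus finitely many $\xi$.

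I expect the main obstacle to be precisely this last finiteness. A naive compactness argument — slicing $\Nef(S)$ and letting walls accumulate — breaks down exactly at null extremal rays $A_i$ with $A_i^2=0$, where the orthogonal complement of a limiting nef class degenerates and the component $\xi_P$ can escape to infinity along the null direction. The resolution is that rational polyhedrality supplies \emph{finitely many integral} generators $A_i$, and the integrality of $p=\xi\cdot A_i$, $m=\xi\cdot A_j$ combined with the bounded discriminant converts this escape into the finiteness of the factorizations $|p|\,|m|\le C'$; this is exactly the mechanism that fails for a circular nef cone, matching Example \ref{eg:main}(b). Finally, finitely many hyperplanes $\xi^{\perp}$ decompose $\Amp(S)$ into finitely many chambers, which gives the statement.
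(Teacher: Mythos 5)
Your proposal is correct, and its skeleton matches the paper's: both proofs reduce Proposition \ref{prop:finwalls} to showing that there are only finitely many integral classes $\xi\in\NS(S)$ with $-C\le \xi^2<0$ (the lower bound coming from Bogomolov's inequality applied to the two destabilizing pieces, the upper bound from Hodge index) and $\xi^\perp\cap\Amp(S)\neq\emptyset$; your $\xi=\rk\ch_1(F)-r'\ch_1(E)$ is exactly the paper's $\delta_1=r\ell_1-r_1\ell$, and your discriminant identity is a cleaner packaging of the paper's chain of inequalities. Where you genuinely diverge is the key finiteness step. The paper isolates it as an abstract lattice statement (Lemma \ref{lem:toylattice}) and proves it with a dual-cone argument: near each integral null boundary ray $\mathbf{d}_i$ of the cone, integrality forces $Q(\mathbf{c},\mathbf{d}_i)\le-1$ for the relevant lattice points, making $Q(\mathbf{c},\mathbf{c})$ very negative, while in directions bounded away from the null rays a compactness argument gives a uniform negative bound. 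You instead choose extremal generators with $\xi\cdot A_i\ge 1$ and $\xi\cdot A_j\le-1$, split $\NS(S)_{\R}$ orthogonally along the hyperbolic plane $P=\langle A_i,A_j\rangle_\R$, and turn the bound on $\xi_P^2$ into the factorization inequality $|\xi\cdot A_i|\,|\xi\cdot A_j|\le C'$, which integrality reduces to finitely many pairs. Your route is more elementary and self-contained (no compactness, explicit bounds on the wall classes); the paper's lemma is more general, since its hypotheses only require the cone to be locally polyhedral with integral rays along the null cone, not globally rational polyhedral --- but both exploit the same mechanism, namely that integrality of the pairing with the null generators prevents walls from accumulating, exactly as your last paragraph observes. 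Two small patches you should make: the inequality $-r'r''\Delta(v)\ge-\tfrac{\rk^2}{4}\Delta(v)$ silently assumes $\Delta(v)\ge 0$ (if $\Delta(v)<0$ all the moduli are empty by Bogomolov, so one just sets $C=\max\{0,\tfrac{\rk^2}{4}\Delta(v)\}$, which is how the paper handles the sign), and the assertion $\xi_P^2\le 0$ is stated without proof --- either give the one-line sign argument in the hyperbolic plane, or note that it is not needed as an input, since $(\det G)\,\xi_P^2$ is a sum of three manifestly non-negative terms and the argument only requires the upper bound $(\det G)\,\xi_P^2\le C|\det G|$, which follows from $\xi_P^2=\xi^2-\xi'^2\ge-C$.
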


Note that for any given polarization $H$ and spherical Mukai vector $v$, there is at most one Gieseker $p_H$-stable (and slope $\mu_H$-stable) vector bundle.  Due to Proposition \ref{thm:mainfinitewall}, the finite theory on walls and chambers, we can talk about `counting' stable spherical bundles in the following sense for a large class of K3 surfaces.
\begin{Cor}\label{cor:maincounting}
    Let $S$ be a smooth projective K3 surface with $\Nef(S)$ being rational polyhedral and $v$ be a spherical Mukai vector  with rank $>0$, then
    \begin{align*}
        \mathsf{H}_S(v):=\#\{E\mid v(E)=v; E\text{ is $\mu_H$-stable with respect to some polarization $H$.}\}<+\infty.
    \end{align*}
\end{Cor}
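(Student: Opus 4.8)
The plan is to bound $\mathsf{H}_S(v)$ by the number of chambers supplied by Proposition \ref{thm:mainfinitewall}. First I would verify that its hypotheses are met. Writing $v=(\rk,\ell,s)$ with $\rk>0$, the spherical condition reads $\langle v,v\rangle=\ell^2-2\rk\,s=-2$. If $(\rk,\ell)=d(\rk',\ell')$ for some integer $d\geq 2$, then $-2=d\bigl(d(\ell')^2-2\rk' s\bigr)$, which forces $d\mid 2$ and hence $d=2$; but then $2(\ell')^2-2\rk' s=-1$ equates an even integer with an odd one, a contradiction. Thus $(\rk,\ell)$ is automatically primitive, and Proposition \ref{thm:mainfinitewall} applies: $M^s(v)$ has only finitely many chambers $C_1,\dots,C_N$ in $\Amp(S)$.

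The key point is that inside each chamber the relevant stability is locally constant while the spherical vector is rigid. Concretely, any bundle $E$ counted by $\mathsf{H}_S(v)$ is $\mu_H$-stable for some polarization $H$, hence Gieseker $p_H$-stable, so $[E]$ defines a point of $M^s(v)$ computed with respect to $H$. Because $\langle v,v\rangle=-2$, the expected dimension $\langle v,v\rangle+2$ of this moduli space is zero and rigidity forces it to consist of at most one point; this is precisely the ``at most one'' statement recorded just before the corollary. Since the set of stable objects of class $v$ does not change as $H$ varies within a single chamber $C_i$, there is a well-defined bundle $E_{C_i}$ (if any) attached to each chamber.

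The one subtlety is that the defining polarization $H$ of a counted bundle may lie on a wall rather than in an open chamber. Here I would invoke the openness of slope stability: for a fixed bundle $E$ the locus $\{H\in\Amp(S):E\text{ is }\mu_H\text{-stable}\}$ is open, since only finitely many subsheaf classes can destabilize $E$ and each imposes an open linear condition on $H$. This locus is non-empty by hypothesis, so it meets the complement of the (lower-dimensional) walls, i.e.\ some open chamber $C_i$; for $H$ in that intersection one has $E=E_{C_i}$. Consequently every bundle counted by $\mathsf{H}_S(v)$ equals one of the finitely many $E_{C_1},\dots,E_{C_N}$, giving $\mathsf{H}_S(v)\leq N<+\infty$.

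I expect the main obstacle to be the bookkeeping in this last step: one must ensure that the wall-and-chamber decomposition of Proposition \ref{thm:mainfinitewall}, which directly governs Gieseker stability, suffices to control slope stability, and that moving from a wall into an adjacent chamber preserves stability of a fixed bundle. The openness of $\mu$-stability, combined with the implication that $\mu_H$-stable bundles are Gieseker $p_H$-stable, resolves both issues; but establishing the boundedness underlying openness, so that only finitely many potentially destabilizing classes occur, is the technical heart of the argument.
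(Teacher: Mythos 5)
Your argument is correct, and its skeleton is the paper's own (Proposition and Definition \ref{propdef:finstabsph}): primitivity of $(\rk,\ell)$, finitely many numerical walls and chambers from Proposition \ref{prop:finwalls}, at most one slope-stable bundle per polarization from Theorem \ref{thm:existsph}, constancy of that bundle within a chamber, plus a separate treatment of bundles that are stable only for a polarization lying on a wall. (Your parity argument for primitivity is an explicit version of what the paper dismisses as ``clear.'') The only genuine divergence is the wall step. You appeal to openness in $H$ of $\mu_H$-stability of a fixed bundle, which---as you yourself flag---rests on a boundedness statement (only finitely many saturated subsheaf classes can destabilize near a given polarization) that the paper never establishes; as written this is the one unproved ingredient of your proof, though it is standard and citable. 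The paper instead closes this step with tools it already has: if $E$ is $\mu_H$-stable for $H$ on a wall and $H'$ is a rational point of an adjacent open chamber, then failure of $\mu_{H'}$-stability would, by Lemma \ref{lem:chaningpolarizationsegment}(2), produce a strictly semistable point $L=(1-t)H+tH'$ with $t\in(0,1]$, and $L$ would then lie on a numerical wall by Lemma \ref{lem:ssemiimpliesonwall}; but walls are hyperplanes, so chambers are convex and the half-open segment $(H,H']$ stays inside the open chamber, a contradiction. Hence such an $E$ coincides with the stable bundle of every neighboring chamber and contributes nothing beyond the chamber count. Your route buys a cleaner conceptual statement (the stable locus of a fixed bundle is open in $\Amp(S)$) at the cost of an external input; the paper's route is self-contained. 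One small correction: in your final paragraph the roles are reversed---the wall-and-chamber structure of Proposition \ref{thm:mainfinitewall} governs slope stability directly (Lemma \ref{lem:ssemiimpliesonwall} concerns slope semistability), so no transfer from Gieseker to slope stability is actually required.
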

We then show that the counting can be done purely numerically. In particular, the function $\mathsf H_S(-)$ is completely determined by the numerical information $(\Nef(S), \langle-,-\rangle_{\text{intersection}})$ of the surface $S$.  In Section \ref{sec5}, we make a case study on a generic elliptic K3 surface admitting a section. To convince the reader that such a counting theory may be not completely pointless, we highlight some interesting observations and results on $\mathsf H_S(-)$ when $S$ is an elliptic K3 surface case.

Let $S$ be a generic elliptic K3 surface admitting a section. Assume that $\NS(S)=\Z e\oplus \Z\sigma$, where $e$ stands for the class of the fiber and $\sigma$ for the class of the $(-2)$-curve.  Then for some special spherical Mukai vectors, the value of $\mathsf H_S(v)$ can be explicitly computed and verified. In Example \ref{eg:computeH}, we show that $\mathsf H_S\left((\rk,\sigma,0)\right)=\rk$. In some other cases, the counting seems to have some pattern, but we cannot give proof. For example, let $$b_1=b_2=1\text{ and } b_{n+1}:=b_n+b_{n-1}$$ be the Fibonacci sequence, then $$\mathsf H_S((b_{n+1},b_n\sigma+*e,*))=\left\lfloor\frac{n}2\right\rfloor^2+2,$$ where $*$'s are integers making the Mukai vector spherical.

 In general, it seems unrealistic to give an explicit formula of $\mathsf H_S(v)$ for all $v$. The asymptotic behavior of $\mathsf H_S$ with respect to the rank is already rather difficult.  We have the following expectations on some of the asymptotic behaviors.
 \begin{Ques}\label{ques:asymptofcounting}
    Let $S$ be a generic elliptic K3 surface admitting a section. Is it true that \begin{align*}
        \min_{\rk(v)=m}\{\mathsf H_S(v)\}= \Theta\left((\ln m)^2\right)?\\
        \mathrm{Average}_{\rk(v)=m}\{\mathsf H_S(v)\}= \Theta\left((\ln m)^2\right)?
    \end{align*}
    
 \end{Ques}
   Here we write $f(m)=\Theta(g(m))$ if there exist positive numbers $C_1$, $C_2$, and $N$ such that $C_1g(m)<f(m)<C_2g(m)$ whenever $m>N$. 

 Note that $\mathsf H_S((\rk,\sigma,0))=\rk$, the spherical Mukai character $(\rk,\sigma,0)$ is an outlier. In general, a spherical Mukai vector $v=(m,a\sigma+*e,*)$ is with `$\mathsf H(v)\gg (\ln m)^2$' only when $\tfrac{a}m$ is `very close' to a rational number $\tfrac{s}{r}$ with `$r\ll m$'. For example, the ratio $\tfrac{1}{\rk}$ is very close to $\tfrac{0}{1}$. Such outlier rational numbers consist of a tiny proportion of rational numbers, which tends to $0$ as $m$ tends to infinity.

As a piece of evidence for  Question \ref{ques:asymptofcounting}, we prove the following weaker form on the average estimation for $\mathsf H_S$.
\begin{Prop}[Proposition \ref{prop:estimationofH}]\label{prop:averestiinintro}
      Let $S$ be a generic elliptic K3 surface admitting a section. Then for any $\alpha>0$, there are positive constants $C_1$ and $C_2$ such that for $m\gg0$, \begin{align*}
         C_1\tfrac{\varphi(m)}m(\ln m)^2\leq \mathrm{Average}_{\rk(v)=m}\{\mathsf H_S(v)\}\leq C_2 m^{\alpha}.
    \end{align*} 
\end{Prop}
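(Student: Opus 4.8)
The plan is to convert both inequalities into statements about continued fractions of the ratio $a/m$ and then feed them into classical metric estimates for the Euclidean algorithm. First I would fix the book-keeping for the index set. Writing a spherical Mukai vector of rank $m$ as $v=(m,\,a\sigma+be,\,s)$, the condition $\langle v,v\rangle=-2$ reads $a(b-a)+1\equiv 0\pmod m$ (using $\sigma^2=-2$, $e\cdot\sigma=1$, $e^2=0$), which is solvable in $b$ precisely when $\gcd(a,m)=1$, and then pins down $b\bmod m$ and $s$. Since tensoring by a line bundle shifts $a$ and $b$ by multiples of $m$ and leaves $\mathsf{H}_S$ unchanged, the rank-$m$ spherical vectors up to twist are indexed by $a\in(\Z/m\Z)^\times$; hence there are $\varphi(m)$ of them and the average in question is $\tfrac{1}{\varphi(m)}\sum_{a}\mathsf{H}_S(v_a)$.

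Next I would reduce $\mathsf{H}_S(v_a)$ to a wall count. As $\NS(S)$ has rank two, $\Amp(S)$ is a single one-parameter family, which I parametrise by the slope $t\in(2,\infty)$ of $H$ against the basis $e,\sigma$; a potential destabiliser $v'=(m',a'\sigma+b'e,s')$ then produces a numerical wall at $t=2-\tfrac{mb'-m'b}{ma'-m'a}$. By Proposition \ref{thm:mainfinitewall} only finitely many of these meet $\Amp(S)$, and $\mathsf{H}_S(v_a)$ is the number of chambers whose unique stable object is locally free. The spherical hypothesis together with the Bogomolov inequality forces both $v'$ and $v-v'$ to have self-intersection $\ge -2$, and a short computation shows this confines the sub-ratio $a'/m'$ to a good rational approximation of $a/m$. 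In this way the walls for $v_a$ are governed by the convergents (equivalently, the partial quotients) of the continued fraction of $a/m$; this is the numerical recipe behind the Fibonacci computation quoted above, where $a/m=b_n/b_{n+1}$ has the extremal expansion $[0;1,\dots,1]$.

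For the upper bound I would not attempt a pointwise estimate — the outlier $\mathsf{H}_S((m,\sigma,0))=m$ shows individual values can be as large as $m$ — but instead bound the total $\sum_{a}\mathsf{H}_S(v_a)$. Each wall of $v_a$ corresponds to an unordered decomposition $v=v'+(v-v')$ into classes of self-intersection $\ge -2$, and summing the number of such decompositions over all $a$ turns into counting integral solutions of a fixed quadratic congruence; the standard divisor bound $d(n)\ll_\alpha n^{\alpha}$ then yields $\sum_{a}\mathsf{H}_S(v_a)\ll_\alpha m^{1+\alpha}$, so that the average is $\ll_\alpha m^{\alpha}$. The point is that although large values of $\mathsf{H}_S$ occur, they are concentrated on the few $a$ whose ratio $a/m$ is abnormally close to a fraction of small denominator, and such $a$ are too sparse to affect a sub-polynomial bound.

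The lower bound is where I expect the real work to lie. For it I would produce, for each $a$, an explicit chain of walls indexed by consecutive pairs of convergents of $a/m$, and check that crossing each such wall replaces the stable object by a genuinely different \emph{locally free} sheaf (rather than a non-locally-free sheaf or a two-term complex). Granting this, the number of walls is bounded below by a quadratic expression in the partial quotients, so that summing over $a$ reduces to a second-moment statistic of the Euclidean algorithm. The mechanism producing the two logarithmic factors is that a positive proportion of $a\in(\Z/m\Z)^\times$ have continued fractions of length $\asymp\ln m$ (Gauss–Kuzmin), and the pairs of convergents then contribute $\asymp(\ln m)^2$ walls each; the factor $\tfrac{\varphi(m)}m$ appears to be exactly what is lost when one insists that the sub-ranks $m'$ occurring in these walls be admissible (coprimality of $m'$ with $m$), which is the price of making the construction uniform in $a$. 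The main obstacle is therefore twofold: geometrically, guaranteeing that enough of these convergent-walls carry \emph{distinct} locally free stable bundles, so that the count is a genuine lower bound for $\mathsf{H}_S$ rather than merely for the number of numerical walls; and arithmetically, extracting the full $(\ln m)^2$ (two factors of $\ln m$, not one) from the average, which requires a quadratic rather than linear continued-fraction statistic restricted to the residues coprime to $m$.
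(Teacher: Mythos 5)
Your index-set bookkeeping (rank-$m$ spherical vectors up to twist $\leftrightarrow$ $a\in(\Z/m\Z)^\times$) and your upper bound follow the paper's proof almost exactly: the paper bounds the total $\sum_{\gcd(n,m)=1}\mathsf H(\tfrac{n}{m})$ by the number of numerical destabilizers of sub-rank $r\leq m/2$, converts that count (Lemma \ref{lem:boundonF}, parts (4) and (6), and Lemma \ref{lem:boundonHandF}) into sums of the divisor function $\tau$ over the quadratic sequences $m^2+r^2-tmr$, and finishes with $\tau(n)=o(n^{\epsilon})$. So the upper half of your plan is essentially the paper's argument.

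The lower bound is where you diverge, and where your plan has a genuine gap. You need that for a positive proportion of $a$ (those of typical continued-fraction length $\asymp\ln m$) the class $v_a$ carries $\gtrsim(\ln m)^2$ actual walls indexed by pairs of convergents. This is exactly the quadratic-in-length phenomenon behind the Fibonacci values $\mathsf H(b_n/b_{n+1})=\lfloor n/2\rfloor^2+2$, which the paper states as an observed pattern it \emph{cannot prove}; and if your claim were established it would yield the conjectured average $\Theta((\ln m)^2)$ of \eqref{eq5116}, strictly stronger than the proposition. The difficulty is concrete: the walls attached to a single convergent $s_i/r_i$ come from the twist parameter $k$ in the destabilizers $v(\tfrac{s_i}{r_i},k)$, and by Lemma \ref{lem:boundonF}(2) their number is roughly $\tfrac{m}{r_id_i}\approx a_{i+1}$, the next partial quotient; summing over convergents therefore gives only the sum of partial quotients, which for typical $a$ is $\asymp\ln m\,\ln\ln m\ll(\ln m)^2$. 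The additional walls you need come from non-convergent approximations satisfying \eqref{eq513} (this is what happens in the Fibonacci case), and no Gauss--Kuzmin or second-moment statement controls those. The paper's proof sidesteps typical $a$ entirely: it restricts to the fractions $n/m$ lying within $1/\sqrt m$ of some $s/r$ with $r<m^{1/4}$ (i.e., having one huge partial quotient), where the single ratio $s/r$ already supplies $\mathsf F(\tfrac{n}{m},\tfrac{s}{r})\geq\left\lfloor\tfrac{m^2-d^2}{mrd}\right\rfloor$ distinct actual walls with $d=|rn-ms|$ (Lemma \ref{lem:desfactor}, Lemma \ref{lem:boundonF}(2),(3)); the two logarithms then arise from the elementary double harmonic sum $\sum_{r<m^{1/4}}\tfrac{1}{r}\sum_{d<r\sqrt m}\tfrac{1}{d}$, and the loss $\varphi(m)/m$ from the coprimality constraints $\gcd(n,m)=\gcd(d,mr)=1$ in that sum, not from admissibility of sub-ranks. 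Finally, your geometric worry about crossing walls producing genuinely different locally free stable sheaves is already packaged numerically: by Proposition and Definition \ref{propdef:actualwall} and Corollary \ref{cor:HvdeterminedbyAmp}, $\mathsf H$ equals the number of chambers and actual walls are detected by the purely lattice-theoretic criterion (3), so the hard part of any lower bound is arithmetic, not geometric.
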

To get the asymptotic behavior of $\mathrm{Average}_{\rk(v)=m}\{\mathsf H_S(v)\}$, we have reduced the main difficulty to the following analytic number theory question, which might hold independent interests.
\begin{Ques}\label{ques:averofdivfunction}
    For $m\in\Z_{\geq 2}$, let $A_{m}:=\{m^2+r^2-trm\in \Z\mid r,t\in \Z_{\geq 1}, \gcd(m,r)=1\}\cap [1,m^2]$
    and $\mathsf G'(m):=\sum_{a\in A_{m}} \tau(a)$, where $\tau(n)=\sum_{d\vert n} 1$ is the divisor function. What is the asymptotic behavior of $\mathsf G'(m)$?
\end{Ques}

\subsection*{Acknowledgement}
The author wishes to thank Arend Bayer and Xiaolei Zhao for their useful comments. Part of the paper is written during the junior trimester program of  S. Liu and G. Ouchi, funded by the Deutsche Forschungsgemeinschaft (DFG, German Research Foundation) under Germany's Excellence Strategy – EXC-2047/1 – 390685813. We would like to thank the Hausdorff Research Institute for Mathematics for their hospitality. C. Li and S. Liu are supported by the Royal Society URF$\backslash$R1$\backslash$201129 “Stability condition and application in algebraic geometry”.

\textbf{Notations:} We  adopt the standard notions from  \cite{HuybrechtsK3book} in general and we will always work over $\C$. We will recap some notions at the beginning of each section for the convenience of the readers.

For a  smooth proper variety $X$, we will denote by $\Db(X)$ the bounded derived category of coherent sheaves on $X$. Given two objects $E$ and $F$ in $\Db(X)$, we will write $\hom(E,F):=\dim \Hom(E,F)$. Let $S$ be a smooth projective surface over $\C$. We adopt the following standard notions on the cone of $S$: N\'eron--Severi   group  $\NS(S):=\Pic(S)/\Pic^0(S)$ and $\NS(S)_{\R}:=\NS(S)\otimes_{\Z}\R$;  ample cone $\Amp(S)$; nef cone $\Nef(S)=\overline{\Amp}(S)$; effective cone $\Eff(S)$; and  Mori cone $\Peff(S)=\overline{\Eff}(S)$.

\section{Orthogonal component to a spherical object}
\begin{Def}\label{def:spherical}

Let $X$ be a smooth proper $n$-dimensional variety over $\C$ ($n\geq 1)$. A non-zero object $E$ in $\Db(X)$ is called \textit{$n$-spherical} if $\mathsf{S}_X(E)=E[n]$, and
\begin{align*}
    \Hom(E,E[i])=\begin{cases}
        \C & \text{ when $i=0,n$;}\\
        0 & \text{ otherwise.}
    \end{cases}
\end{align*}
    An object $G$ is defined to be in $E^\perp$ if $\RHom(E,G)=0$. 
\end{Def}
\begin{Ex}[an element in $\cO_X^\perp$]\label{eg:spherical}
    Assume that $X$ is a strict Calabi--Yau variety of dimension $n$, in other words, $\omega_X\cong \cO_X$ and $H^i(\cO_X)=0$ when $1\leq i\leq n-1$, then the structure sheaf $\cO_X$ is $n$-spherical. 
    
    Let $p$ and $q$ be two different closed points on $X$. Denote by $\cO_p$ and $\cI_q$ the skyscraper sheaf at $p$ and the ideal sheaf of $q$ respectively. By Serre duality, $\Hom(\cO_p,\cI_q[n])=(\Hom(\cI_q,\cO_p))^*=\C.$ Let $f$ be a non-zero morphism in $\Hom(\cO_p,\cI_q[n])$. We claim that the object $$F:=\Cone(\cO_p\xrightarrow{f}\cI_q[n])$$
    is in $\cO_X^\perp$.
    \end{Ex}
 \begin{proof} Apply $\Hom(\cO_X,-)$ to the distinguished triangle
    \begin{align}
        \cO_p\xrightarrow{f}\cI_q[n]\to F\xrightarrow{+}. \label{eq:2.1}
    \end{align} We get the long exact sequence:
    \begin{align*}
        \dots\to& \Hom(\cO_X,\cO_p[i])\to \Hom(\cO_X,\cI_q[n+i])\to \Hom(\cO_X,F[i])\\\to &\Hom(\cO_X,\cO_p[i+1])\to \dots.
    \end{align*}
    Note that $\RHom(\cO_X,\cI_q[n])=\C$ and $\RHom(\cO_X,\cO_p)=\C$, we get $\Hom(\cO_X,F[i])=0$ when $i\neq 0,-1$. Moreover, the spaces
    \begin{align}\label{eq2112}
        \Hom(\cO_X,F)\cong \Hom(\cO_X,F[-1]),\text{ and they are both $0$ or $\C$.}
    \end{align}

    Apply $\Hom(-,\cO_X)$ to \eqref{eq:2.1}, we get the long exact sequence
    \begin{align}\label{eq:2.111}
        \dots\to& \Hom(\cO_p,\cO_X[n-1])=0\to \Hom(F,\cO_X[n])\to \Hom(\cI_q[n],\cO_X[n])\\ \xrightarrow{-\circ f} & \Hom(\cO_p,\cO_X[n])\to \dots.\notag
    \end{align}
Note that $\Hom(\cI_q[n],\cO_X[n])=\C$. Let $g$ be a non-zero element in $\Hom(\cI_q[n],\cO_X[n])$, then $\Cone(g)\cong \cO_q[n]$. Apply $\Hom(\cO_p,-)$ to the distinguished triangle
\begin{align}
        \cI_q[n]\xrightarrow{g}\cO_X[n]\to \cO_q[n]\xrightarrow{+}, \label{eq:2.100}
    \end{align}

As $\RHom(\cO_p,\cO_q)=0$, the map $g\circ -:\Hom(\cO_p,\cI_q[n])\to \Hom(\cO_p,\cO_X[n])$ is an isomorphism. It follows that the composition $g\circ f\neq 0$.

Therefore, in \eqref{eq:2.111}, the map $\Hom(\cI_q[n],\cO_X[n])\xrightarrow{-\circ f}\Hom(\cO_p,\cO_X[n])$ is non-zero. Note that  $\hom(\cI_q[n],\cO_X[n])=\hom(\cO_p,\cO_X[n])=1$, the map $-\circ f$ is an isomorphism. It follows that $\Hom(F,\cO_X[n])=0$. By Serre duality, $\Hom(\cO_X,F)=0$.

Together with \eqref{eq2112}, we get $\RHom(\cO_X,F)=0$, in other words, the object $F\in\cO_X^\perp$. 
 \end{proof}  

The similar construction as that in Example \ref{eg:spherical} works for all $n$-spherical objects in $\Db(X)$. The statement reads as follows.

\begin{Prop}\label{prop:sphericalorth}
Let $X$ be an $n$-dimensional smooth proper variety ($n\geq 1$) over $\C$. Then for every $n$-spherical object $E$ in $\Db(X)$, its orthogonal category $E^\perp$ has non-trivial objects.    
\end{Prop}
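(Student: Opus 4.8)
The plan is to reproduce the construction of Example \ref{eg:spherical} with $\cO_X$ replaced by $E$. The first thing I would observe is that the object $\cI_q[n]$ used there is nothing but $\mathsf{T}_{\cO_X}(\cO_q)[n-1]$: the defining triangle of the twist reads $\cO_X\to\cO_q\to\mathsf{T}_{\cO_X}(\cO_q)$, so $\mathsf{T}_{\cO_X}(\cO_q)=\cI_q[1]$. This tells me how to proceed in general. Given an $n$-spherical $E$, fix two distinct general closed points $p\neq q$ in the locus where the cohomology sheaves of $E$ are locally free, and set $A:=\cO_p$ and $B:=\mathsf{T}_E(\cO_q)[n-1]$. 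Using that $\mathsf{T}_E$ is an autoequivalence with $\mathsf{T}_E(E)=E[1-n]$, one computes $\RHom(E,B)=\RHom(E,\cO_q)[1-n][n-1]=\RHom(E,\cO_q)$, so that $\RHom(E,A)$ and $\RHom(E,B)$ are isomorphic graded vector spaces (both compute the derived fibre of $E^\vee$, whose graded dimension is constant on a dense open set). I would then take $F:=\Cone(A\xrightarrow{f}B)$ for a suitable $f$, and run the two long exact sequences obtained by applying $\Hom(E,-)$ and $\Hom(-,E)$ to the defining triangle, exactly as in Example \ref{eg:spherical}, to conclude $\RHom(E,F)=0$; the object $F$ is non-zero because $A\not\cong B$.

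The crucial input is a morphism $f\colon A\to B$ for which $\RHom(E,f)\colon\RHom(E,A)\to\RHom(E,B)$ is an isomorphism. The relevant $\Hom$-space is computable: by Serre duality $\Hom(\cO_p,B)\cong\Ext^1(\mathsf{T}_E\cO_q,\cO_p)^*$, and applying $\Hom(-,\cO_p)$ to the twist triangle $\RHom(E,\cO_q)\otimes E\to\cO_q\to\mathsf{T}_E\cO_q$ together with $\RHom(\cO_q,\cO_p)=0$ (valid since $p\neq q$) identifies this with $\RHom(E,\cO_q)^\vee\otimes\RHom(E,\cO_p)$ up to shift. In the sheaf case $\RHom(E,\cO_p)=\C^{\rk E}$ in degree $0$, so $\Hom(A,B)$ has dimension $(\rk E)^2$, matching $\Hom_\C(\RHom(E,\cO_p),\RHom(E,\cO_q))$. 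Under this identification I would choose $f$ corresponding to an isomorphism. When $\rk E=1$ there is nothing to choose, since any non-zero element of a one-dimensional space is invertible, and this is exactly the content of the auxiliary step in Example \ref{eg:spherical} showing $g\circ f\neq 0$.

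The step I expect to be the main obstacle is the higher-rank refinement of this last point: one must show that the natural comparison map $\Hom(A,B)\to\Hom_\C\big(\RHom(E,A),\RHom(E,B)\big)$ actually meets the invertible (degree-preserving) elements, so that a genuine isomorphism $\RHom(E,f)$ can be arranged rather than merely a non-zero map; the dimension count above makes surjectivity plausible but it must be proved. Once such an $f$ is fixed, the vanishing $\RHom(E,F)=0$ follows from the long exact sequences verbatim. Two remarks are worth recording. First, for a spherical object Serre duality together with $\mathsf{S}_X(E)=E[n]$ gives $\Hom(E,G[k])\cong\Hom(G,E[n-k])^*$, hence $E^\perp={}^\perp E$; so the one-sided vanishing produced here already yields the two-sided orthogonality asserted in Proposition \ref{thm:mainEperp}. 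Second, no purely formal argument seems available: a spherical object does not generate an admissible subcategory of $\Db(X)$ (its twist is an autoequivalence, not a projection onto $\langle E\rangle$), so $E^\perp\neq 0$ cannot be extracted from a semiorthogonal decomposition, and an explicit construction such as the above is genuinely needed.
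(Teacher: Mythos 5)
Your reduction is internally consistent, and your target object is in fact the right one: since the paper's $S'_Q\cong\mathsf{T}_E^{-1}(\cO_q)[1-n]$ and $\mathsf{T}_E$ fixes every object of $E^\perp$, the paper's orthogonal object $K$ satisfies $K[n]\cong\Cone\bigl(\cO_q\to\mathsf{T}_E(\cO_p)[n-1]\bigr)$, which is exactly your $\Cone(A\to B)$ with the roles of $p$ and $q$ exchanged. But the step you yourself flag as the main obstacle --- the existence of $f\colon A\to B$ with $\RHom(E,f)$ invertible --- is a genuine gap, and it is where the entire content of the proposition lies. The dimension count does not deliver it: knowing that $\hom(A,B)$ equals the dimension of the space of degree-preserving graded maps $\RHom(E,A)\to\RHom(E,B)$ says nothing about whether the comparison map $f\mapsto\RHom(E,f)$ is injective, surjective, or meets an invertible element; a priori its image could consist entirely of non-invertible maps. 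Note that already in the rank-one case of Example \ref{eg:spherical} ``$f\neq 0$'' was not enough by itself: the paper needed the auxiliary argument that $g\circ f\neq 0$, which exploits $\RHom(\cO_p,\cO_q)=0$ to identify two one-dimensional spaces, and that trick has no evident analogue for an abstractly chosen higher-rank $f$. To close your gap along your own lines you would have to prove that the identification of $\Hom(A,B)$ with $\bigoplus_j\Hom_{\C}\bigl(\Hom(E,A[j]),\Hom(E,\cO_q[j])\bigr)$, obtained from the twist triangle and Serre duality, actually intertwines the functor $\RHom(E,-)$; that compatibility is a real diagram-chase and is nowhere addressed in your proposal.

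The paper avoids this issue entirely, and this is the key idea your proposal is missing: it never chooses a map abstractly. It sets $F=E\otimes\RHom(E,\cO_p)$ and uses the canonical maps $\ev_2\colon F\to\cO_p$ and (Serre-dually) $\ev_1\colon \cO_q[-n]\to F$. Because $\RHom(E,E)=\C\oplus\C[-n]$, the induced maps $\Hom(E,F[i])\to\Hom(E,\cO_p[i])$ and $\Hom(F,E[i])\to\Hom(\cO_q[-n],E[i])$ are \emph{split} surjections --- this comes for free from the universal property of evaluation, with no choice to justify. The second surjectivity forces all connecting maps in the $\Hom(-,E)$ long exact sequence to vanish, giving the bookkeeping identity \eqref{eq1}; combined with \eqref{eq2} and Serre duality this yields $\hom(E,S'_Q[i])=\hom(E,\cO_p[i])$ for every $i$. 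The map $\Hom(E,S'_Q[i])\to\Hom(E,\cO_p[i])$ induced by the diagram is surjective (again via $\ev_2$), hence an isomorphism, being a surjection between finite-dimensional spaces of equal dimension; the fibre $K$ of $S'_Q\to\cO_p$ then lies in $E^\perp$. In short, the paper manufactures the isomorphism you need as ``split surjection plus equal dimensions'' rather than exhibiting it inside $\Hom(A,B)$. Either import this evaluation-map mechanism (at which point you have reproduced the paper's proof up to applying $\mathsf{T}_E$), or supply the compatibility check above; as written, the argument is incomplete at its central point.
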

\begin{proof}
     By semicontinuity, we may pick two different closed points $p$ and $q$ on $X$ (denote by $P=\cO_p$ and $Q=\cO_q$ the skyscraper sheaves) such that $\RHom(E,P)\cong\RHom(E,Q)$.

      By Serre duality, we may consider the object $$F:=E\otimes \RHom(E,P)\cong E\otimes (\RHom(Q[-n],E))^*.$$ 
    
    We complete the commutative square 
     \begin{center}
	\begin{tikzcd}
		 Q[-n] \arrow{r} \arrow{d}{\ev_1} & 0\arrow{d} \\
		 F
		 \arrow{r}{\ev_2} & P 
	\end{tikzcd}
\end{center}

    to the following diagram of distinguished triangles (arrows `$\xrightarrow{+}$' are all  omitted).

    \begin{center}
	\begin{tikzcd}
		Q[-n] \arrow{d} \arrow{r}{\id}
		& Q[-n] \arrow{r} \arrow{d}{\ev_1} & 0\arrow{d} \\
		S_P \arrow{r} \arrow{d}& F
		\arrow{d} \arrow{r}{\ev_2} & P \arrow{d}{\id}\\
		K \arrow{r} & S'_Q\arrow{r}{f} & P.
	\end{tikzcd}
\end{center}

All squares except the bottom-left one are commutative.

    Apply $\Hom(-,E)$ to the distinguished triangle $Q[-n]\xrightarrow{\ev_1}F\rightarrow S'_Q\xrightarrow{+}$. We get surjective maps $\Hom(F,E[i])\xrightarrow{-\circ\ev_1}\Hom(Q[-n],E[i])$ for every $i\in\Z$ due to the nature of the evaluation map $\ev_1$. So in the long exact sequence
\begin{align*}
   \dots &\xrightarrow{-\circ\ev_1}\Hom(Q[-n],E[i-1])\\
     \xrightarrow{g_{i-1}}\Hom(S'_Q,E[i])\rightarrow \Hom(F,E[i])&\xrightarrow{-\circ\ev_1}\Hom(Q[-n],E[i])\\
     \xrightarrow{g_i} \Hom(S'_Q,E[i+1])\rightarrow \dots, &
\end{align*}
the map $g_i=0$ for every $i\in\Z$.
    
    It follows that

    \begin{equation}
    \hom(F,E[i])=\hom(Q[-n],E[i])+\hom(S'_Q,E[i])\label{eq1}
    \end{equation}
for every $i\in\Z$.

    As $E$ is $n$-spherical, by the definition of $F=E\otimes \RHom(E,P)$, we have \begin{equation}
        \hom(F,E[i])=\hom(E,P[-i])+\hom(E,P[n-i])\label{eq2}
    \end{equation} for every $i\in\Z$.
    
    By Serre duality, $\hom(Q[-n],E[i])=\hom(E,P[-i])$ for every $i\in\Z$. It follows by \eqref{eq1}, \eqref{eq2}, and Serre duality  that 
    \begin{equation}
        \hom(E,P[n-i])=\hom(S'_Q,E[i])=\hom(E,S'_Q[n-i])\label{1}
    \end{equation}
    for every $i\in\Z$.
    
    Finally, we apply $\Hom(E,-)$ to the commutative diagram 
     \begin{center}
	\begin{tikzcd}
		 F
		\arrow{d} \arrow{r}{\ev_2} & P \arrow{d}{\id}\\
		 S'_Q\arrow{r}{f} & P.
	\end{tikzcd}
\end{center}
 By definition of the evaluation map $\ev_2$, the map $\Hom(E,F[i])\xrightarrow{\ev_2\circ -}\Hom(E,P[i])$ is surjective for every $i\in\Z$. It follows that the map $\Hom(E,S'_Q[i])\xrightarrow{f\circ -}\Hom(E,P[i])$ is surjective for every $i\in \Z$ as well. By \eqref{1}, the map $\Hom(E,S'_Q[i])\xrightarrow{f\circ -}\Hom(E,P[i])$ is isomorphic for every $i\in \Z$. Apply $\Hom(E,-)$ to the distinguished triangle $$K\to S'_Q\xrightarrow{f} P\xrightarrow[]{+}$$ at the bottom. It follows that $\Hom(E,K[i])=0$ for every $i\in\Z$.

 Therefore, the object  $ K\in E^\perp$ and it is clearly non-zero. \end{proof}

\begin{Def}\label{def:sphericaltwist}
Let $E$ be a shperical object in $\Db(X)$, the \textit{spherical twist} $\mathsf{T}_E$ is defined to be the Fourier--Mukai transformation from $\Db(X)$ to $\Db(X)$ with kernel $\Cone(E^\vee\boxtimes E\xrightarrow{\mathsf{can}}\cO_{\Delta})$.
\end{Def}
 The spherical twist $\mathsf{T}_E$ is an exact autoequivalence on $\Db(X)$ (see \cite[Theorem 1.2]{Seidel-Thomas:braid}). In practice, the spherical twist  $\mathsf{T}_E$ on an object $F$  can  be computed by the equivalent definition as that in \cite[Definition 2.5]{Seidel-Thomas:braid}: $\mathsf{T}_E(F)=\Cone(E\otimes\RHom(E,F)\xrightarrow{\ev} F)$.

It is clear that $\mathsf{T}_E(E)=E[1-n]$ and $\mathsf{T}_E$ fixes objects in $E^\perp$. Proposition \ref{prop:sphericalorth} gives the following corollary immediately.
\begin{Cor}\label{cor:sphericaltwistnonzero}
    Let $X$ be an $n$-dimensional proper smooth variety over $\C$ and $E$ be an $n$-spherical object, then $\mathsf{T}_E^k$ is non-trivial unless $k=0$.
\end{Cor}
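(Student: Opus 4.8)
The plan is to argue by contradiction. Following the phrasing in the introduction, I read ``non-trivial'' as ``not isomorphic to a shift functor'', so I assume that $\mathsf{T}_E^k\cong[\ell]$ for some $k\in\Z$ and some $\ell\in\Z$, and I aim to conclude $k=0$. The conceptual point is that $E$ alone cannot witness this: since $\mathsf{T}_E(E)=E[1-n]$, the power $\mathsf{T}_E^k$ shifts $E$ in exactly the way the genuine shift $[k(1-n)]$ does, so when tested against $E$ it looks like a shift no matter what $k$ is. The remedy is to test against two objects whose behaviour under the twist is complementary: one that is fixed, and $E$ itself which moves.

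First I would apply Proposition \ref{prop:sphericalorth} to choose a nonzero $K\in E^\perp$. Because $\mathsf{T}_E$ fixes every object of $E^\perp$, we obtain $K\cong\mathsf{T}_E^k(K)\cong K[\ell]$. Since a nonzero object of $\Db(X)$ has bounded and not identically vanishing cohomology, it cannot be isomorphic to a nonzero shift of itself; hence $\ell=0$, and therefore $\mathsf{T}_E^k\cong\id$. This is the step that uses Proposition \ref{prop:sphericalorth} essentially, and it is what the remark ``Proposition \ref{prop:sphericalorth} gives the following corollary immediately'' refers to.

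Next I would plug $E$ into the identity just obtained. From $\mathsf{T}_E(E)=E[1-n]$ we get $\mathsf{T}_E^k(E)=E[k(1-n)]$, and comparing with $\mathsf{T}_E^k\cong\id$ gives $E\cong E[k(1-n)]$. By the same elementary fact about self-shifts this forces $k(1-n)=0$; as $n\geq 2$ we have $1-n\neq 0$, so $k=0$, contradicting $k\neq 0$.

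The one genuinely problematic case, and the main obstacle, is $n=1$: then $\mathsf{T}_E(E)=E$, so $E$ is fixed as well and the last step becomes vacuous. To handle it I would descend to the induced action on the numerical Grothendieck group, where $\mathsf{T}_E$ acts by $x\mapsto x-\chi(E,x)[E]$. For $n$ odd one has $\chi(E,E)=1+(-1)^n=0$, so this action is a transvection and $\mathsf{T}_E^k$ acts as $x\mapsto x-k\,\chi(E,x)[E]$; provided $[E]\neq 0$ (so that some class, e.g.\ that of a point, has $\chi(E,x)\neq 0$) this is trivial only for $k=0$, whereas a shift acts as $\pm\id$, again ruling out $\mathsf{T}_E^k\cong[\ell]$ with $k\neq 0$. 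Apart from this degenerate dimension, the only real ingredient beyond Proposition \ref{prop:sphericalorth} is the trivial but crucial observation that a nonzero bounded complex admits no nontrivial self-shift.
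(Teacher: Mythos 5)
Your proposal is correct, and on the case the paper actually argues it is the same proof: use Proposition \ref{prop:sphericalorth} to produce a nonzero $K\in E^\perp$, which is fixed by $\mathsf{T}_E^k$ and so kills any candidate shift $[\ell]$ (no nonzero bounded complex is a nonzero shift of itself), then feed in $E$ itself, where $\mathsf{T}_E^k(E)=E[k(1-n)]$ forces $k(1-n)=0$. Where you go beyond the paper is the dimension-one case: you correctly observe that for $n=1$ the above test is vacuous, since then $\mathsf{T}_E$ fixes $E$ as well as $E^\perp$ --- a point the paper's ``immediately'' glosses over, as its stated corollary does allow $n=1$. Your patch via the induced transvection $x\mapsto x-k\,\chi(E,x)[E]$ on the numerical Grothendieck group is sound in outline (and the comparison with $[\ell]$ acting as $(-1)^\ell\id$ does rule out shifts), but it has two blemishes. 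First, it is conditional on $[E]\neq 0$ in $\Kn$, which you do not verify; it is true, and can be seen because on a smooth projective curve every object of $\Db(C)$ splits as a direct sum of shifts of its cohomology sheaves, so a spherical (hence indecomposable) object is a shift of a sheaf, and a nonzero sheaf on a curve pairs nontrivially either with a point class (if it has positive rank) or with $\cO_C$ (if it is torsion). Second, your parenthetical example is wrong in exactly the relevant situation: for the basic $1$-spherical object $E=\cO_x$ one has $\chi(\cO_x,\cO_y)=0$ for every closed point $y$, so the class of a point detects nothing; one must pair with $\cO_C$, where $\chi(\cO_x,\cO_C)=-1$. With those two repairs your argument proves the statement in full generality, including the case the paper's own two-line justification does not reach.
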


By the philosophy on the relation between even-dimensional nodal singularity and $2$-spherical object as that in \cite{shengxuannodal} and \cite{kuznetsov2023categorical}, one may reach the Question \ref{ques:nodalsg} in the introduction. Namely, let $S$ be a smooth projective surface with a $2$-spherical object $E$ in $\Db(S)$.  Do we always have the quotient category\begin{align*}
        \Db(S)/\langle E^\perp,E\rangle\cong \mathrm{D}_{\mathrm{sing}}(\Spec(\C[x]/(x^2)))?
    \end{align*}

Here we write $\mathrm{D}_{\mathrm{sing}}(X)=\Db(X)/\mathrm{D}_{\mathrm{perf}}(X)$ for the singular category of a variety $X$.

\begin{Rem}\label{rem:sphericalperp}We finish this section with some remarks on Question \ref{ques:nodalsg}.
    \begin{enumerate}
    \item For a special case and the starting point of this question, assume that the surface $S$ contains a smooth integral rational curve $C\cong \mathbf P^1$ with self-intersection $-2$. Then $E=\cO_C(-1)$ is a $2$-spherical object. Denote by $\pi:S\to T$ the morphism contracting $C$. Then $\pi_*$ induces the equivalence $\Db(S)/\langle \cO_C(-1)\rangle\cong \Db(T)$ and $\pi^*(\mathrm{D}^{\mathrm{perf}}(T))=\langle E^\perp\rangle$. It follows that 
$$\Db(S)/\langle E^\perp,E\rangle\cong \Db(T)/\mathrm{D}^\mathrm{perf}(T)=\mathrm{D}_{\mathrm{sing}}(T)\cong \mathrm{D}_{\mathrm{sing}}(\Spec(\C[x]/(x^2))).$$
        \item The even-dimensional nodal singularity category $\mathrm{D}_{\mathrm{sing}}(\Spec(\C[x]/(x^2)))$  is generated by the unique simple object $A$ with $A=A[1]$. Each object in $\mathrm{D}_{\mathrm{sing}}(\Spec(\C[x]/(x^2)))$ is of the form  $A^{\oplus m}$ for some $m\in\Z_{\geq 0}$. By the argument in Proposition \ref{prop:sphericalorth}, one may check that $\underline{\cO_p}=\underline{\cO_q}=\underline{\cO_q}[1]$ in $\Db(S)/\langle E^\perp,E\rangle$ for any points $p$ and $q$ on the support of $E$. One may further show that every object in $\Db(S)/\langle E^\perp, E\rangle$ is equivalent to $\cO_p$. This gives a piece of evidence for the question.

\item The question may also have a more ambitious version in the general case. Replace $S$  by an arbitrary smooth proper variety $X$ and $E$  by a Serre invariant full subcategory $\cA$ in $\Db(X)$. One may ask whether $\Db(X)/\langle \cA^\perp,\cA\rangle\cong \cA^\dag$ which is independent on the choice of $X$. 
 \end{enumerate}
\end{Rem}
\section{Spherical bundles and stability}\label{sec3}
From this section, we will focus on smooth projective surfaces rather than varieties with arbitrary dimensions. We briefly recap some standard notions from \cite{HL:Moduli} on the slope and Gieseker stability of coherent sheaves on a surface.  As we will be mainly interested in torsion-free coherent sheaves, some notions are simplified in the note.

\subsection{Recap: Stability, Mukai vector}
\begin{Def}\label{def:stab}

Let $S$ be a smooth projective surface and $H$ be an ample divisor. For a torsion-free sheaf $E$, the \textit{reduced Hilbert polynomial} of $E$ is defined to be $p_H(E,m):=\frac{\chi(E(mH))}{\rk(E)}$. The \textit{slope} of $E$ is defined to be $\mu_H(E):=\frac{H.\ch_1(E)}{\rk(E)}$.

A torsion-free coherent sheaf $E$ is called Gieseker (semi)stable with respect to $H$ (or $p_H$-(semi)stable) if $p_H(F, m) < (\leq) p_H(E, m)$ when $m\gg 0$, for all proper non-trivial subsheaves $F \subset E$. The sheaf $E$ is called slope (semi)stable with respect to $H$ (or $\mu_H$-(semi)stable) if $\mu_H(F) <  (\leq) \mu_H(E)$ for all non-trivial subsheaves $F \subset E$ with $\rk(F)<\rk(E)$.
    
\end{Def}

For a given polarization $H$, the following slogan is well-known:
\begin{align}\label{eq:slopestableslogan}
    \text{slope stable}\implies \text{Gieseker stable}\implies\text{Gieseker semistable}\implies\text{slope semistable.}
\end{align} 

When $E$ is strictly  Gieseker semistable, in other words, semistable but not stable, $E$ admits a Jordan--H\"older filtration $0=E_0\subset E_1\subset\dots\subset E_n=E $ such that all quotients $E_{i+1}/E_i$ are stable with the same reduced Hilbert polynomial as that of $E$. The direct sum of all factors $\oplus_{i=1}^n(E_i/E_{i-1})$ does not depend on the filtration. When $E$ is strictly slope semistable, a Jordan--H\"older filtration exists such that all quotients $E_{i+1}/E_i$ are slope stable with the same slope as that of $E$. The direct sum of all factors $\oplus_{i=1}^n(E_i/E_{i-1})$ is only unique up to a modification of skyscraper sheaves.

\begin{Def}\label{def:mukaivector}
Let $S$ be a smooth projective K3 surface. The \emph{Mukai vector} of an object $E\in\Db(S)$ is defined to be:
\begin{align*}
    v(E):=(\rk(E),\ch_1(E),\ch_2(E)+\rk(E))\in \tilde{\mathrm{H}}(S,\Z).
\end{align*}
The lattice product structure on $\tilde{\mathrm{H}}(S,\Z)=\mathrm H^0(S,\Z)\oplus \NS(S)\oplus \mathrm H^4(S,\Z)$ is defined to be \begin{align*}
    \langle (r_1,\ell_1,c_1),(r_2,\ell_2,c_2)\rangle:=\ell_1.\ell_2-r_1c_2-r_2c_1.
\end{align*}
We call a Mukai vector $v=(\rk,\ell,c)\in\tilde H(S,\Z)$ \textit{spherical} if  $\rk>0$ and $\langle v,v\rangle =-2$. 
\end{Def}
Here we add the condition $\rk>0$ in the definition to simplify some statements in the future. It is clear by Riemann--Roch that $\chi(E,F)=-\langle v(E),v(F)\rangle$. The Mukai vector $v(E)$ of a spherical bundle $E$ is spherical.

\subsection{Unstable spherical bundles}
Before giving an example of a spherical bundle on a K3 surface that is never stable, it is worth mentioning that a spherical coherent sheaf can be non-torsion-free but with a positive rank. 

\begin{Ex}\label{eg:torgluespherical}
Let $S$ be a K3 surface containing a smooth rational curve $C$. Then we may consider the object $\mathsf{T}_{\cO_S}(\cO_C(-2))$ which is given by the distinguished triangle:
\begin{equation}\label{eq3.1}
    \to \cO_S[-1]\xrightarrow{\ev}\cO_C(-2) \to \mathsf{T}_{\cO_S}(\cO_C(-2)) \to \cO_S\to.
\end{equation}
As $\cO_C(-2)$ is spherical and $\mathsf{T}_{\cO_S}$ is an autoequivalence, the object $\mathsf{T}_{\cO_S}(\cO_C(-2))$ is spherical. As $\mathsf{T}_{\cO_S}(\cO_C(-2))$ is the extension of two coherent sheaves $\cO_C(-2)$ and $\cO_S$, it is a coherent sheaf. It is clear that $\mathsf{T}_{\cO_S}(\cO_C(-2))$ is not  torsion-free.
\end{Ex}

We now make a further spherical twist on $\mathsf{T}_{\cO_S}(\cO_C(-2))$ to get an example of a spherical bundle that is never stable. 
\begin{Ex}\label{eg:nonstablesphericalbundle}
    Let $S$ be a smooth projective K3 surface with Picard number $2$ containing two smooth integral rational curves (the existence of such a K3 surface is ensured by  \cite[Theorem 2]{Kovacs:K3}). In particular, the cone $\Nef(S)$ is strictly contained in $\Peff(S)$.

Denote one of the smooth rational curves by $C$ and its divisor class by $[C]$. Let $\beta$ be a non-zero divisor in $[C]^\perp\cap \partial\Nef(S)$. Then the two rays of $\partial \Nef(S)$ are spanned by $\beta$ and $\beta-a_1[C]$ for some $a_1>0$. 

 The two rays of $\partial\Peff(S)$ are spanned by $[C]$ and $\beta-a_2[C]$ for some $a_2>a_1>0$. Moreover, it follows from the duality between $\Nef(S)$ and $\Peff(S)$ that 
\begin{align}\label{eq4}
     \beta^2= 2a_1a_2.
\end{align}

As $a_2>a_1$, there exists a very ample divisor $[G]=s(\beta-a[C])$ for some $a\in\Q$ such that $$a_2-\sqrt{a_2^2-a_1a_2}<a<a_1.$$

Claim: For every integer $m\gg 0$, $F_m:=\mathsf{T}_{\cO_S(-mG)}(\mathsf{T}_{\cO_S}(\cO_C(-2)))[-1]$ is a spherical bundle that is not slope semistable with respect to any polarization $H$.
\end{Ex}
\begin{proof}
Apply $\mathsf T_{\cO_S(-mG)}$ to the short exact sequence:
$$0\to \cO_C(-2)\to \mathsf T_{\cO_S}(\cO_C(-2))\to \cO_S\to 0.$$

We have the following diagram of distinguished triangles (arrows `$\xrightarrow[]{+}$' are omitted for simplicity):
 \begin{center}
	\begin{tikzcd}
		A_m \arrow{d} \arrow{r}{}
		& F_m \arrow{r} \arrow{d}{} & B_m\arrow{d} \\
		\cO_S(-mG)^{\oplus mGC-1} \arrow{r} \arrow{d}{\ev_1}& \cO_S(-mG)^{\oplus mGC+m^2G^2/2+1}
		\arrow{d}{\ev} \arrow{r}{} & \cO_S(-mG)^{\oplus m^2G^2/2+2} \arrow{d}{\ev_2}\\
		\cO_C(-2) \arrow{r} & \mathsf{T}_{\cO_S}(\cO_C(-2))\arrow{r} & \cO_S,
	\end{tikzcd}
 \end{center}
where $A_m=\mathsf T_{\cO_S(-mG)}(\cO_C(-2))[-1]$ and $B_m=\mathsf T_{\cO_S(-mG)}(\cO_S)[-1]$. Note that $\ev_1$ and $\ev_2$ are both surjective in $\Coh(S)$ when $m\gg 0$, it follows that the objects $A_m$ and $B_m$ are torsion-free spherical sheaves, hence spherical bundles. So $F_m$ is a spherical bundle. \\

 To finish the claim, we show that when $m\gg 0$, the slope $\mu_H(A_m)>\mu_H(B_m)$ for every polarization $H$ of $S$.

 It follows by a direct computation that:
 \begin{align*}
     \mu(A_m)-\mu(B_m) & = \frac{-mG(mGC-1)-C}{mGC-1} - \frac{-mG(\frac{m^2G^2}{2}+2)}{\frac{m^2G^2}{2}+1} \\
     & = \frac{2mG}{m^2G^2+2}-\frac{C}{mGC-1}\\
     & = \frac{2ms}{m^2G^2+2}\left(\beta-aC-\frac{m^2G^2+2}{2ms(mGC-1)}C\right).
 \end{align*}

Note that 
\begin{align*}
    \lim_{m\to +\infty} a+ \frac{m^2G^2+2}{2ms(mGC-1)} & = a+\frac{G^2}{2sGC} =a+\frac{s^2\beta^2-2a^2s^2}{4s^2a} \\ 
   & =\frac{a_1a_2+a^2}{2a}<a_2.
\end{align*}

Therefore, when $m\gg 0$, the divisor $ \mu(A_m)-\mu(B_m)$ is effective. For every ample divisor $H$, it follows that $\mu_H(A_m)-\mu_H(B_m)=H(\mu(A_m)-\mu(B_m))>0$. By definition, the spherical bundle $F_m$ is not $\mu_H$-semistable. \end{proof}

\begin{Rem}\label{rem:unstablesph}
    We do not know if there are such kind of never-stable spherical bundles on a K3 surface with other types of Mori cones, for example, an elliptic K3 surface admitting a section.

    For a given spherical Mukai vector, we also do not know if there can be infinitely many never-stable spherical bundles with that vector
    \end{Rem}
\begin{Ques}\label{ques:sphbristab}
    A related question is that: given a spherical object (or semi-rigid object, see \cite[Definition 2.4]{K3Pic1}) $E\in\Db(S)$, does there exist a Bridgeland stability condition $\sigma$ such that $E$ is $\sigma$-semistable?
\end{Ques}
 Note that when the K3 surface $S$ is of Picard number one, the question above is true due to \cite[Lemma 6.3]{K3Pic1} (or in the spherical case, more precisely stated in Corollary 6.9).
\subsection{Infinitely many spherical bundles}

In this section, we give examples of infinitely many stable spherical bundles with the same Mukai vector. In general, we expect there exist such examples when the cone $\Nef(S)$ is circular. 

\begin{Ex}\label{eg:infinitelymanyspherical}
Let $\Lambda$ be a rank $3$ primitive sublattice in $ U^{\oplus 3}\subset U^{\oplus 3}\oplus (-E_8)^{\oplus 2}$ with an orthogonal basis $\{\mathbf h, \mathbf e,\mathbf f\}$ satisfying: $\mathbf h^2=12$, $\mathbf e^2=-6$ and $\mathbf f^2=-30$.  By \cite[Corollary 1.9]{Morrison:K3}, there exists a projective  K3 surface $S$  such that $\NS(S)\cong \Lambda$.

We claim that there exist infinitely many spherical bundles with Mukai vector $(2,\mathbf f,-7)$ on $S$. Each of them is slope stable with respect to some polarization on $S$.
    \end{Ex}

\begin{proof}
  
For every pair of  positive integer solutions $(a,b)$  to the Pell equation $2x^2-y^2=1$, we may consider the divisors 
$D_a=a\mathbf h+b\mathbf e+\mathbf f$ and $E_a=-a\mathbf h-b\mathbf e$. 

As $\NS(S)$ has no class with self-intersection $-2$, its effective cone is circular and is equal to the nef cone. A divisor $D=x\mathbf h+y\mathbf e+z\mathbf f\in \Peff(S)=\Nef(S)$ if and only if $D^2\geq 0$ and $x\geq 0$.  

Note that $$(D_a-E_a)^2=48a^2-24b^2-30=-6,$$ neither the divisor $D_a-E_a$ or $E_a-D_a$ is effective. It follows that $\Hom(\cO_S(D_a),\cO_S(E_a))=\Hom(\cO_S(E_a),\cO_S(D_a))=0$

By Riemann--Roch, $\chi(\cO_S(E_a),\cO_S(D_a))=\tfrac{1}{2}(D_a-E_a)^2+2=-1$. So $\Ext^1(\cO_S(E_a),\cO_S(D_a))=\C$, and  there is a unique rank $2$ bundle $V_{a}$, non-trivially extended by $\cO_S(D_a)$ and $\cO_S(E_a)$. Namely, it fits into the short exact sequence
$$0\to \cO_S(D_a)\to V_{a}\to \cO_S(E_a)\to 0.$$

The bundle $V_a$ is also given as $\mathsf T_{\cO_S(E_a)}(\cO_S(D_a))$ by definition. So it is spherical. Its Mukai vector is given as

$$v(V_a)=(1,D_a,\tfrac{1}{2}D_a^2+1)+(1,E_a,\tfrac{1}{2}E_a^2+1)=(2,\mathbf f,-7).$$

For two different pairs of positive integer solutions $(a_1,b_1)$ and $(a_2,b_2)$ with $a_1>a_2$. By the criterion of the effective divisor, $D_{a_2}-D_{a_1}$ and $E_{a_2}-D_{a_1}$ are not effective.
It follows that $\Hom(\cO_S(D_{a_1}),V_{a_2})=0$. 

Therefore $V_{a_1}$ and $V_{a_2}$ are not isomorphic to each other when $a_1\neq a_2$. As the Pell equation $2x^2-y^2=1$ has infinitely many positive integer solutions, there are infinitely many spherical bundles $V_{a}$ with the same Mukai vector $(2,\mathbf f,-7)$.\\

We then show the stability of $V_a$'s. For every $V_{a}$, we may consider the divisor $H_a:=7a\mathbf h+7b\mathbf e+3\mathbf f$. By a direct computation, $H_a^2=588a^2-294b^2-270=24>0$ and $7a>0$.  So the divisor $H_a$ is ample.

By a direct computation, $$H_aD_a=42-90=-48,\; H_aE_a=-42,\;\text{ and }\mu_{H_a}(V_a)=-45.$$ Suppose $V_{a}$ is not $\mu_{H_a}$-stable, then it must be destabilized by a line bundle. So there exists a sub line bundle $\cO_S(C)$ with $H_aC=\mu_{H_a}(C)\geq \mu_{H_a}(V_{a})=-45$.

As $\cO_S(C)\subset V_a$, we have  $\Hom(\cO_S(C),V_a)\neq 0$. If $\Hom(\cO_S(C),\cO_S(D_a))\neq 0$, then $D_a-C$ is effective. It follows that $H_aC\leq H_aD_a=-48$, which contradicts the conclusion that $H_aC\geq -45$. Hence, $\Hom(\cO_S(C),\cO_S(E_a))\neq 0$. As the extension is non-trivial, $C\neq E_a$.  So $E_a-C$ is non-zero effective.  It follows that $H_aC=H_aE_a-H_a(E_a-C)\leq -42-6<-45$, which is again a contradiction. Therefore, $V_{a}$ cannot be destabilized by any line bundle and it is $\mu_{H_a}$-stable.
  \end{proof}

\section{Walls and chambers}\label{sec4}
\subsection{A finite theory for walls and chambers}
Let $S$ be a smooth projective surface of Picard number at least $2$. Let $v=(\rk,\ell,s)\in \widetilde{\mathrm{H}}(S,\Z)$ satisfying $\rk>0$ and $(\rk,\ell)$ being primitive, in other words, $(\rk,\ell)\neq m(\rk',\ell')$ for any $m\in \Z_{\geq2}$. Rescaling the polarization does not affect the stability of any sheaves, so for any ample divisor $H$ and element $H'\in \R_{>0}. H\subset\Amp(S)$, we may define the (semi)stability with respect to $H'$ and $\mu_{H'}$-slope (semi)stability the same as that with respect to $H$. Denote by $\Amp_{\Q}(S)$ the subset of all rational points in $\Amp(S)$. For every $H\in \Amp_{\Q}(S)$, denote by $M^s_H(v)$ ($M^{ss}_H(v)$) the moduli space of $\mu_H$-slope (semi)stable torsion-free sheaves with class $v$.

In this section, we review the wall and chamber structure for $M^s_H(v)$  while the polarization $H$ is changing in $\Amp(S)$. The upshot is a finite theory on the numerical walls when the nef cone is rational polyhedral. For simplicity, we focus on slope stability and give a remark on the theory of Gieseker stability at the end of this section. 

For a coherent sheaf $F$ on $S$, we will denote by $\ch(F):=(\rk(F),\ch_1(F),\ch_2(F))$.  Note that by a little abuse of notations, in this section, the vector $v\in \widetilde{\mathrm{H}}(S,\Z)$ is assumed to be $\ch(F)$ which is different from the Mukai vector $v(F):=(\rk(F),\ch_1(F),\ch_2(F)+\rk(F))$ in the K3 surface case. The discriminant is denoted as $\Delta(v):=(\ch_1)^2-2\rk\ch_2$.

\begin{Lem}[{\cite[Lemma 4.C.5]{HL:Moduli}}]\label{lem:chaningpolarizationsegment}
    Let $F$ be a torsion-free sheaf that is $\mu_H$-slope stable with respect to some $H\in\Amp_\Q(S)$. Let $H'\in\Amp_\Q(S)$. 
    \begin{enumerate}[(1)]
        \item If $F$ is $\mu_{H'}$-slope stable, then $F$ is $\mu_{L}$-slope stable for every $L=(1-t)H+tH'$, where $t\in [0,1]\cap \Q$.
        \item If $F$ is not $\mu_{H'}$-slope stable, then $\exists L=(1-t)H+tH'$ for some $t\in (0,1]\cap \Q$ such that $F$ is strictly $\mu_L$-slope semistable.\hfill $\Box$
    \end{enumerate}
\end{Lem}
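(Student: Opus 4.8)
The plan is to rewrite slope (semi)stability as a family of sign conditions on linear functionals and exploit the affine-linearity of the pairing with $L_t=(1-t)H+tH'$ along the segment. For a subsheaf $F'\subset F$ with $0<\rk(F')<\rk(F)$, introduce the class
\[
\xi_{F'}:=\rk(F)\,\ch_1(F')-\rk(F')\,\ch_1(F)\in\NS(S).
\]
A direct manipulation of slopes shows that $\mu_L(F')<\mu_L(F)\iff L.\xi_{F'}<0$ and $\mu_L(F')\le\mu_L(F)\iff L.\xi_{F'}\le 0$. Hence $F$ is $\mu_L$-stable precisely when $L.\xi_{F'}<0$ for every such $F'$, and $\mu_L$-semistable precisely when $L.\xi_{F'}\le 0$ for every such $F'$. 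The decisive feature is that, for each fixed $F'$, the function $t\mapsto L_t.\xi_{F'}=(1-t)(H.\xi_{F'})+t(H'.\xi_{F'})$ is affine in $t$, and that $L_t\in\Amp(S)$ for all $t\in[0,1]$ by convexity of the ample cone (with $L_t\in\Amp_\Q(S)$ when $t\in\Q$).

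Part (1) is then immediate: if $F$ is both $\mu_H$- and $\mu_{H'}$-stable, then $H.\xi_{F'}<0$ and $H'.\xi_{F'}<0$ for every $F'$, so each $L_t.\xi_{F'}$, being a convex combination of two negative numbers, is negative for all $t\in[0,1]$. Thus $F$ is $\mu_{L}$-stable for every $L=(1-t)H+tH'$ with $t\in[0,1]\cap\Q$.

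For part (2), since $F$ is $\mu_H$-stable we have $H.\xi_{F'}<0$ for all $F'$, whereas $F$ failing to be $\mu_{H'}$-stable means $H'.\xi_{F_1}\ge 0$ for some subsheaf $F_1$. For every subsheaf $F'$ with $H'.\xi_{F'}\ge 0$, the affine function $\phi_{F'}(t):=L_t.\xi_{F'}$ satisfies $\phi_{F'}(0)<0\le\phi_{F'}(1)$ and so has a unique zero
\[
t_{F'}=\frac{H.\xi_{F'}}{H.\xi_{F'}-H'.\xi_{F'}}\in(0,1]\cap\Q,
\]
with $\phi_{F'}(t)<0$ for $t<t_{F'}$; rationality follows because $H,H'\in\Amp_\Q(S)$ and $\xi_{F'}$ is integral. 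I would then set $t^*:=\inf_{F'}t_{F'}$, the infimum over all $\mu_{H'}$-destabilizing subsheaves, and claim that $F$ is strictly $\mu_{L_{t^*}}$-semistable. Indeed, at $t^*$ the minimizer $F_0$ gives $\phi_{F_0}(t^*)=0$ (so $F$ is not $\mu_{L_{t^*}}$-stable), every other destabilizing $F'$ has $\phi_{F'}(t^*)\le 0$ since $t^*\le t_{F'}$, and every non-destabilizing $F'$ (with $H'.\xi_{F'}<0$) has $\phi_{F'}(t^*)<0$ as $\phi_{F'}$ is negative at both endpoints; hence $L_{t^*}.\xi_{F'}\le 0$ for all $F'$ and $F$ is $\mu_{L_{t^*}}$-semistable.

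The main obstacle is to show that this infimum is strictly positive, is attained, and is rational. For this I would invoke boundedness: the subsheaves $F'$ that can destabilize $F$ with respect to some polarization on the compact segment $[H,H']$ form a bounded family, via the discriminant and Hodge-index estimates underlying the wall-and-chamber theory of \cite{HL:Moduli} (the quantity $H.\xi_{F'}$ is bounded, a Bogomolov-type inequality bounds $\xi_{F'}^2$ from below, and the Hodge index theorem then confines $\xi_{F'}$ to a bounded region, leaving only finitely many integral classes). Finiteness of the relevant set $\{\xi_{F'}\}$ makes $\{t_{F'}\}$ a finite subset of $(0,1]\cap\Q$, so $t^*$ is attained at some $F_0$, is positive (it cannot accumulate at $0$, which would contradict $\mu_H$-stability), and is rational; since $t^*\in(0,1]$ the class $L_{t^*}$ lies in $\Amp_\Q(S)$, completing the argument.
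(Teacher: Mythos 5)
Your reduction of (semi)stability along the segment to sign conditions on the affine functions $\phi_{F'}(t)=L_t.\xi_{F'}$ is exactly the right mechanism, and it is the same skeleton as the proof in \cite[Lemma 4.C.5]{HL:Moduli}, which the paper cites in place of giving its own argument. Part (1), the formula for $t_{F'}$ and its rationality, and the verification that $F$ is strictly semistable at an \emph{attained} minimal wall are all correct. You also correctly identify the crux: one must show that the set $\{t_{F'}\}$ of walls met by the segment is finite (or at least that its infimum is attained and positive). This is not a technicality: if there were destabilizers $F_n$ with $t_{F_n}$ strictly decreasing to $t^*$ and no attainment, then $F$ would be stable at $L_{t^*}$ and unstable (indeed not even semistable) at every $t>t^*$, and the conclusion of (2) would be false. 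So everything rests on that finiteness.

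The gap is in your justification of it. A Bogomolov-type lower bound on $\xi_{F'}^2$ is not available for an arbitrary $\mu_{H'}$-destabilizing subsheaf: writing $r=\rk(F)$, $r'=\rk(F')$, $r''=r-r'$, one has $\xi_{F'}^2=rr''\Delta(\ch(F'))+rr'\Delta(\ch(F/F'))-r'r''\Delta(\ch(F))$, so a lower bound requires $\Delta\geq 0$ for \emph{both} $F'$ and $F/F'$, i.e.\ requires them to be $\mu$-semistable with respect to some polarization. Arbitrary destabilizers need not satisfy this, and the discriminant of an unstable sheaf can be arbitrarily negative (e.g.\ $\cO_S(nD)\oplus\cO_S(-nD)$ has $\Delta=-4n^2D^2$). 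The Bogomolov-plus-Hodge-index mechanism is what underlies local finiteness of the \emph{numerical} walls, where one may choose $F'$ so that sub and quotient are semistable because $F$ itself is semistable there (equal-slope subsheaves of a semistable sheaf are semistable; this is how Lemma \ref{lem:ssemiimpliesonwall} of the paper works). Invoking it only at $t^*$ is circular, since semistability of $F$ at $t^*$ with an equal-slope subsheaf is exactly what attainment of the infimum gives. The standard repair, and the one used in \cite{HL:Moduli}, is Grothendieck's Lemma (\cite[Lemma 1.7.9]{HL:Moduli}): after replacing each destabilizer by its saturation (which increases $\phi_{F'}$ pointwise, hence only decreases $t_{F'}$, so this costs nothing), the saturated subsheaves $F'\subset F$ with $\mu_{H'}(F')\geq\mu_{H'}(F)$ form a bounded family, so only finitely many classes $\xi_{F'}$, hence finitely many walls $t_{F'}$, occur. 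Note that passing to saturations is not optional: the family of all destabilizing subsheaves is certainly unbounded, so boundedness in any form can only be asserted for the saturated ones, a reduction your write-up omits. (Your route can be salvaged, but it needs the extension of Bogomolov's inequality to unstable sheaves, bounding $\Delta$ below in terms of the Harder--Narasimhan slope-spread, together with the observation that saturated destabilizers have uniformly bounded spread; that is substantially more work than the one-line appeal you make.)
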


For a pair of classes $v=(\rk,\ell,s)$ and $v'=(\rk',\ell',s')\in\tilde{\mathrm{H}}(S,\Z)$. We denote by $\cW(v,v'):=(\rk\cdot \ell'-\rk'\cdot \ell)^\perp$ whenever this is codimension-$1$ $\R$-linear subspace  in $\NS(S)_\R$.

\begin{Def}\label{def:numwall}
Let $v=(\rk,\ell,s)\in\tilde{\mathrm{H}}(S,\Z)$ with $\rk>0$ and $(\rk,\ell)$ being primitive.   We call a codimension-$1$ $\R$-linear subspace $\cW$ in $\NS(S)_{\R}$ a \textit{numerical wall} of $v$ if $\cW=\cW(v,v_i)$ for some $ v_i=(\rk_i,\ell_i,s_i)\in \widetilde{\mathrm{H}}(S,\Z)$, $i=1,2$, such that:
  \begin{enumerate}
      \item $v=v_1+v_2$, $\rk_i>0$, and $\Delta(v_i)\geq 0$;
      \item $\cW\cap \Amp(S)\neq \emptyset$.
  \end{enumerate}
\end{Def}

\begin{Lem}\label{lem:ssemiimpliesonwall}
      Let $v=(\rk,\ell,s)\in\tilde{\mathrm{H}}(S,\Z)$ with $\rk>0$ and $(\rk,\ell)$ being primitive. Let $F$ be a strictly $\mu_H$-slope semistable torsion-free sheave with class $v$. Then $H$ is on a numerical wall of $v$.
\end{Lem}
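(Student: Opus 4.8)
The plan is to read off the wall directly from the failure of stability, by decomposing $F$ into two pieces of equal slope whose numerical classes span the wall. Since $F$ is strictly $\mu_H$-slope semistable, its slope Jordan--H\"older filtration $0=E_0\subset E_1\subset\dots\subset E_n=F$ has $n\geq 2$, and each factor $E_i/E_{i-1}$ is $\mu_H$-slope stable of slope $\mu_H(F)$. I would set $F_1:=E_1$ and $F_2:=F/F_1$. Then $F_1$ is $\mu_H$-stable (hence torsion-free), while $F_2$ inherits the filtration with factors $E_i/E_{i-1}$, so it is an iterated extension of $\mu_H$-semistable torsion-free sheaves all of slope $\mu_H(F)$; consequently $F_2$ is torsion-free and $\mu_H$-semistable with $\mu_H(F_2)=\mu_H(F)$. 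Writing $v_i:=\ch(F_i)=(\rk_i,\ell_i,s_i)$, additivity of the Chern character gives $v=v_1+v_2$.

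Next I would verify that $\cW(v,v_1)$ is a numerical wall in the sense of Definition \ref{def:numwall} and that it contains $H$. Both $F_1$ and $F_2$ are nonzero torsion-free of positive rank, and $\rk_1<\rk$ because $n\geq 2$, so $\rk_1,\rk_2>0$. Since $F_1$ and $F_2$ are $\mu_H$-slope semistable torsion-free sheaves, the Bogomolov inequality \cite{HL:Moduli} gives $\Delta(v_i)=\ell_i^2-2\rk_i s_i\geq 0$ for $i=1,2$; together with $v=v_1+v_2$ and $\rk_i>0$ this is condition (1). The equality of slopes $\mu_H(F_1)=\mu_H(F)$ reads $\rk\,(H.\ell_1)=\rk_1\,(H.\ell)$, i.e. $H.(\rk\ell_1-\rk_1\ell)=0$, so $H\in(\rk\ell_1-\rk_1\ell)^\perp=\cW(v,v_1)$; in particular $\cW(v,v_1)\cap\Amp(S)\neq\emptyset$, which is condition (2).

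The step that is not purely formal, and the one I expect to be the crux, is checking that $\cW(v,v_1)$ is genuinely a hyperplane, i.e. that $D:=\rk\ell_1-\rk_1\ell\neq 0$ in $\NS(S)$ (so that $D^\perp$ is codimension $1$, using nondegeneracy of the intersection form on $\NS(S)_\R$ from the Hodge index theorem). This is exactly where the primitivity of $(\rk,\ell)$ is needed. Suppose instead $\rk\ell_1=\rk_1\ell$; setting $d:=\gcd(\rk,\rk_1)$ and writing $\rk=d\rk'$, $\rk_1=d\rk_1'$ with $\gcd(\rk',\rk_1')=1$, one gets $\rk'\ell_1=\rk_1'\ell$, and a B\'ezout identity $a\rk'+b\rk_1'=1$ yields $\ell=\rk'(a\ell+b\ell_1)$ together with $\rk=\rk'\cdot d$. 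Thus $(\rk,\ell)=\rk'\,(d,\,a\ell+b\ell_1)$, so primitivity forces $\rk'=1$, i.e. $\rk\mid\rk_1$, contradicting $0<\rk_1<\rk$. Hence $D\neq 0$, $\cW(v,v_1)$ is a numerical wall, and $H$ lies on it, which completes the argument.
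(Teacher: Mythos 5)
Your proof is correct and follows essentially the same route as the paper's: decompose $F$ by an equal-slope torsion-free subsheaf with torsion-free quotient (the paper saturates a destabilizing subsheaf; you take the first Jordan--H\"older step), apply the Bogomolov inequality to get $\Delta(v_i)\geq 0$, and read off $H\in\cW(v,v_1)\cap\Amp(S)$ from the equality of slopes. The only place you go beyond the paper's one-line proof is the explicit check that $\rk\,\ell_1-\rk_1\ell\neq 0$ (so that $\cW(v,v_1)$ is genuinely a hyperplane) via the B\'ezout argument from primitivity of $(\rk,\ell)$ --- a point the paper leaves implicit, and which is exactly where that hypothesis is used.
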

\begin{proof}
   Let $F_1\subset F$ be a proper saturated subsheaf of $F$ with $\mu_H(F_1)=\mu_H(F)$. Let $v_1=\ch(F_1)$ and $v_2=\ch(F_2)$, then the only non-trivial part $\Delta(v_i)\geq 0 $ follows from the Bogomolov inequality, see \cite[Theorem 3.4.1]{HL:Moduli}.
\end{proof}

\begin{Prop}\label{prop:finwalls}
    Let $S$ be a smooth projective surface over $\C$ with $\Nef(S)$ being rational polyhedral. Then for every class $v=(\rk,\ell,s)\in\tilde{\mathrm{H}}(S,\Z)$ with $\rk>0$ and $(\rk,\ell)$ being primitive, there are only finitely many numerical walls of $v$. In particular, there are finitely many walls and chambers for the moduli space $M^s_H(v)$ in $\Amp(S)$.
\end{Prop}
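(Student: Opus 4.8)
The plan is to reduce the statement to a single numerical bound on the normal vector of a wall, and then to exploit the rational polyhedrality of $\Nef(S)$ to rule out an accumulation of walls at the boundary of the nef cone.

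First I would isolate the normal vector. Writing $v=v_1+v_2$ as in Definition \ref{def:numwall} and setting $\xi:=\rk\ell_1-\rk_1\ell=\rk_2\ell_1-\rk_1\ell_2$, the wall is $\cW=\xi^\perp$ and $\xi\neq 0$ since $\cW$ is codimension $1$. A direct computation gives the identity
\[
\frac{\xi^2}{\rk_1\rk_2\rk}=\frac{\Delta(v_1)}{\rk_1}+\frac{\Delta(v_2)}{\rk_2}-\frac{\Delta(v)}{\rk}.
\]
Condition (2) supplies an ample $H\in\xi^\perp$, so $\xi\in H^\perp$; since $H^2>0$, the Hodge index theorem forces $\xi^2<0$. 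Feeding $\Delta(v_i)\geq 0$ from condition (1) into the identity yields
\[
0<-\xi^2\leq \rk_1\rk_2\,\Delta(v)\leq \tfrac{\rk^2}{4}\,\Delta(v)=:M,
\]
so walls occur only when $\Delta(v)>0$, and every (primitively normalised) wall-normal $\xi$ satisfies $-\xi^2\leq M$.

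Next I would argue by contradiction. If there were infinitely many walls we would obtain infinitely many distinct primitive $\xi_n$ with $-\xi_n^2\leq M$ together with ample $H_n\in\xi_n^\perp$. Fixing an ample $A$ and rescaling, I put $H_n$ in the compact slice $\Sigma:=\{H\in\Nef(S): A\cdot H=1\}$ (compact because $A$ is strictly positive on $\Nef(S)\setminus\{0\}$, again by Hodge index). Since a fixed norm on $\NS(S)_\R$ contains only finitely many integral classes of bounded length, $\lVert\xi_n\rVert\to\infty$; after passing to a subsequence and fixing signs, $H_n\to H_\infty\in\Sigma\subset\Nef(S)$ and $\hat\xi_n:=\xi_n/\lVert\xi_n\rVert\to\hat\xi_\infty$ with $\hat\xi_\infty^2=\lim \xi_n^2/\lVert\xi_n\rVert^2=0$ and $\hat\xi_\infty\cdot H_\infty=0$. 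As $H_\infty$ is nef, $H_\infty^2\geq 0$. If $H_\infty^2>0$ then $\hat\xi_\infty\in H_\infty^\perp\setminus\{0\}$ gives $\hat\xi_\infty^2<0$ by Hodge index, contradicting $\hat\xi_\infty^2=0$. Hence $H_\infty^2=0$, so $\hat\xi_\infty$ and $H_\infty$ are orthogonal isotropic vectors and therefore proportional, since the intersection form has signature $(1,\rho-1)$ and so contains no totally isotropic plane. A short face computation, using that nef classes have nonnegative self-intersection, then shows an isotropic nef class spans an extremal ray; rational polyhedrality gives a primitive integral $N$ with $N^2=0$ spanning it, $\hat\xi_\infty\parallel N$, and $N\cdot N'>0$ for every other extremal generator $N'$. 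Choosing an isotropic $N^\vee$ with $N\cdot N^\vee=1$ and negative definite complement $W=\langle N,N^\vee\rangle^\perp$, I write $\xi_n=\alpha_n N+\beta_n N^\vee+w_n$ with $w_n\in W$. Then $\hat\xi_n\to N/\lVert N\rVert$ forces $\alpha_n=\xi_n\cdot N^\vee\to+\infty$, whence $\xi_n\cdot N'>0$ for all other generators; for $\xi_n^\perp$ to meet $\Amp(S)$, the functional $\xi_n$ must take a negative value on $\Nef(S)$, which can only come from $\beta_n=\xi_n\cdot N\leq -1$. But then $-\xi_n^2=-2\alpha_n\beta_n-w_n^2\geq 2\alpha_n\to+\infty$, contradicting $-\xi_n^2\leq M$.

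The main obstacle is exactly this last point: the Bogomolov bound $-\xi^2\leq M$ is by itself far too weak to yield finiteness, because an indefinite lattice contains infinitely many classes of bounded negative square, and indeed walls genuinely accumulate when $\Nef(S)$ is circular, as in Example \ref{eg:infinitelymanyspherical}. The rational polyhedral hypothesis enters precisely to force any accumulation direction to be a \emph{rational} isotropic extremal ray $N$, against which the integrality $\xi_n\cdot N\in\Z$ and the bound on $\xi_n^2$ can be played against each other. Finally, the moduli statement is formal: by Lemma \ref{lem:ssemiimpliesonwall} strict semistability of a class-$v$ sheaf occurs only on a numerical wall, and by Lemma \ref{lem:chaningpolarizationsegment} stability is constant on each connected component of $\Amp(S)$ minus the finitely many walls, so there are finitely many chambers.
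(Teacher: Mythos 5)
Your proof is correct, and its first half is essentially the paper's reduction in cleaner form: the identity
\[
\frac{\xi^2}{\rk_1\rk_2\rk}=\frac{\Delta(v_1)}{\rk_1}+\frac{\Delta(v_2)}{\rk_2}-\frac{\Delta(v)}{\rk}
\]
repackages the chain of inequalities the paper runs to obtain $\delta_1^2\geq-\rk^2\ell^2+2\rk^3s$, and both arguments thereby reduce the Proposition to the same statement: only finitely many integral classes $\xi$ can satisfy a bound $-\xi^2\leq M$ while having $\xi^\perp\cap\Amp(S)\neq\emptyset$. Where you genuinely diverge is in proving that finiteness. The paper isolates it as a standalone quantitative lattice lemma (Lemma \ref{lem:toylattice}): working with the dual cone of $\Nef(S)$, it combines integrality constants $c_i$ at each rational boundary ray $\cud_i$ with a compactness constant $\delta$ away from those rays to show that every qualifying lattice class lies in an explicit ball; this is effective, and the lemma needs only local polyhedrality at the finitely many rational rays where the cone touches the isotropic boundary of the positive cone. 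You instead run a compactness contradiction: normalized wall-normals accumulate at an isotropic direction orthogonal to a limiting nef class, hence proportional to it by the signature argument; that direction must then be a rational isotropic extremal ray $N$ of $\Nef(S)$, and the hyperbolic decomposition $\xi_n=\alpha_nN+\beta_nN^\vee+w_n$ together with the integrality gap $\xi_n\cdot N\leq-1$ forces $-\xi_n^2\geq 2\alpha_n\to\infty$. Both arguments rest on the same two ingredients, Hodge index away from the boundary and integrality against rational isotropic rays, but yours is shorter, avoids the dual-cone bookkeeping, and makes transparent why walls can only accumulate when the nef cone has round or irrational boundary (consistent with Example \ref{eg:infinitelymanyspherical}); the paper's version buys an explicit bound on the wall-normals and a slightly more general hypothesis. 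Your handling of the final moduli statement via Lemmas \ref{lem:chaningpolarizationsegment} and \ref{lem:ssemiimpliesonwall} matches the paper's.
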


We will reduce the argument for the proposition to some elementary lattice theory.

Let $N=\R^n$ be an $n$-dimensional real vector space and $\Lambda=\Z^n$ the integer points in it.  Let $|\bullet|$ be a standard norm on $N$. For a vector $\cud\in N$ and $r\in\R_{>0}$, denote by $B(\cud,r):=\{\cuc\in N\;|\; |\cud-\cuc|<r\}$ the neighbourhood at $\cud$. Denote by $\tilde{B}(\cud,r):=\{\R_{>0}.\cuc\;|\; \cuc\in B(\cud,r) \}$ the cone neighbourhood. 

Let $Q\in \mathrm{Mat}(n\times n,\Z)$ be a non-degenerate symmetric bilinear form on $N$ with $\Z$-coefficients  and signature $(1,n-1)$. Denote by $C_0=\{x\in N\;|\;Q(x)>0\}=C_0^+\coprod C_0^-,$ where $C_0^\pm$ are the two connected components of $C$. It follows by H\"older's Inequality that $Q(\cud,\cud')>0$ for all $\cud,\cud'\in C^+_0$. For a vector $\cud\in N$, we denote by $\cud^\perp:=\{\cuc\in N\;|\; Q(\cud,\cuc)=0\}$.
\begin{Lem}\label{lem:toylattice}
Let $A$ be a non-empty open convex cone in $C_0^+$ such that 
\begin{align}\label{eq4.1}
    \partial A\cap\partial C_0^+ = \cup_{i=1}^k \R_{>0}.\cud_i
\end{align}
for finitely many $\cud_i$ in $\Lambda$ such that $ \overline{A}$ is locally polyhedral at every $\cud_i$.

Then for any real value $t<0$,
    $$\#\{\cud\in \Lambda\;|\; Q(\cud,\cud)\geq t,\exists \;\cuh\in A, Q(\cud,\cuh)=0\}<+\infty.$$
\end{Lem}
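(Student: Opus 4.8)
The plan is to reduce the count to a finiteness statement for lattice vectors of one fixed $Q$-norm, and then run a compactness-plus-integrality argument. First I would dispose of the timelike and null $\cud$: if $Q(\cud,\cud)>0$ then $\cud^\perp$ is negative definite and meets $C_0^+$ only at $0$, while if $Q(\cud,\cud)=0$ then $\cud^\perp$ is tangent to the light cone and meets $\overline{C_0^+}$ only along the boundary ray $\R\cud\subset\partial C_0^+$; in either case $\cud^\perp\cap A=\emptyset$, since $A\subset C_0^+$ is open. Hence every contributing $\cud$ is spacelike, and as $Q$ is integral on $\Lambda$ we have $Q(\cud,\cud)\in\Z$, so $Q(\cud,\cud)\ge t$ forces $Q(\cud,\cud)\in\Z\cap[t,-1]$, finitely many values. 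It therefore suffices to fix $c<0$ and show that $\{\cud\in\Lambda:Q(\cud,\cud)=c,\ \exists\,\cuh\in A,\ Q(\cud,\cuh)=0\}$ is finite.

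Suppose not. Then there are distinct such $\cud^{(j)}$, necessarily with $|\cud^{(j)}|\to\infty$. Normalizing to unit length and passing to a subsequence, $\cud^{(j)}/|\cud^{(j)}|\to\uu_*$; since $Q(\cud^{(j)},\cud^{(j)})=c$ is fixed while $|\cud^{(j)}|\to\infty$, we get $Q(\uu_*,\uu_*)=0$, so $\uu_*$ is null. Choosing signs (replacing some $\cud^{(j)}$ by $-\cud^{(j)}$, which changes neither $\cud^{(j)\perp}$ nor its norm) so that infinitely many directions lie near one nappe, I may assume $\uu_*\in\partial C_0^+$. Picking witnesses $\cuh_j\in A$ with $Q(\cud^{(j)},\cuh_j)=0$, normalizing to unit length, and passing to a further subsequence $\cuh_j\to\cuh_*\in\overline A$, the relation $Q(\cud^{(j)}/|\cud^{(j)}|,\cuh_j)=0$ gives $Q(\uu_*,\cuh_*)=0$ with $\uu_*$ future null and $\cuh_*\in\overline A\subseteq\overline{C_0^+}$ future causal. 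A future null vector has strictly positive product with any future timelike vector, and two orthogonal future null vectors must be parallel (a totally isotropic plane is impossible in signature $(1,n-1)$); hence $\cuh_*$ is null and parallel to $\uu_*$. Since $\cuh_*\in\overline A\cap\partial C_0^+=\partial A\cap\partial C_0^+=\cup_i\R_{>0}\cud_i$, the accumulation direction is one of the special rays, say $\R_{>0}\cud_1$.

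Next I would adapt coordinates to $\cud_1$. As $\cud_1$ is null, choose a null $\cud_1'$ with $Q(\cud_1,\cud_1')=1$ and split $N=\langle\cud_1,\cud_1'\rangle\oplus W$ with $W$ negative definite; writing $(a,b,\ww)$ for $a\cud_1+b\cud_1'+\ww$ and $\|\ww\|^2:=-Q(\ww,\ww)$, one has $Q((a,b,\ww),(a,b,\ww))=2ab-\|\ww\|^2$ and $Q(\cud_1,(a,b,\ww))=b$. Thus $A\subseteq C_0^+\subseteq\{b>0\}$ and $\cud_1^\perp=\{b=0\}$ supports $A$ at $\cud_1$, meeting $\overline{C_0^+}$ only in $\R_{\ge0}\cud_1$. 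The decisive use of local polyhedrality is to upgrade this to an \emph{ice-cream} bound: in a conical neighborhood $U'$ of $\R_{>0}\cud_1$ the set $\overline A\cap U'$ is a polyhedral cone $P$ with $P\subseteq\{b\ge0\}$ and $P\cap\{b=0\}=\R_{\ge0}\cud_1$; the recession cone of the slice $P\cap\{b=1\}$ then lies in $P\cap\{b=0\}$, so it has vanishing $\ww$-component, the slice is $\ww$-bounded, and by conicity there is $M>0$ with $\|\ww\|\le Mb$ throughout $P$. I expect this to be the main obstacle: it is exactly here that polyhedrality is indispensable, since a merely tangential (horoball-type) contact at the ideal vertex would violate such a conical bound.

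Finally I would combine the three constraints. Writing $\cud^{(j)}=(a_j,b_j,\ww_j)$, convergence of directions to $\R_{>0}\cud_1$ gives $a_j\to+\infty$ and $\|\ww_j\|=o(a_j)$, while $Q(\cud^{(j)},\cud^{(j)})=c$ reads $b_j=(c+\|\ww_j\|^2)/(2a_j)$. Normalizing each witness as $\cuh_j=(1,q_j,\mathbf z_j)\in A$ (its $\cud_1$-component being dominant), we have $q_j=Q(\cud_1,\cuh_j)>0$ and $\|\mathbf z_j\|\le Mq_j$ by the ice-cream bound. Orthogonality $Q(\cud^{(j)},\cuh_j)=0$ becomes $b_j=-a_jq_j+\langle\ww_j,\mathbf z_j\rangle$, and since $|\langle\ww_j,\mathbf z_j\rangle|\le M q_j\|\ww_j\|=o(a_jq_j)$ the right-hand side is strictly negative for large $j$; hence $b_j<0$, which forces $\|\ww_j\|^2<|c|$ and therefore $|b_j|\le|c|/(2a_j)\to0$. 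But $b_j=Q(\cud_1,\cud^{(j)})\in\Z$ because $Q$ is integral and $\cud_1,\cud^{(j)}\in\Lambda$, so a nonzero $b_j$ with $|b_j|<1$ is impossible. This contradiction establishes finiteness for each fixed $c$, and summing over the finitely many admissible values $c\in\Z\cap[t,-1]$ yields the lemma.
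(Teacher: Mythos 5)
Your proof is correct, but it takes a genuinely different route from the paper's. The paper argues directly and quantitatively: it passes to the dual cone $A^*$, identifies the contributing lattice vectors with $\{0\}\cup\left(\Lambda\setminus \mathrm{Int}(A^*\cup(-A^*))\right)$, uses local polyhedrality to show that $\partial A^*$ coincides with $\cud_i^\perp$ near each ray (its \eqref{eq4.3}), and then combines two estimates: near the rays, integrality (a lattice vector off the rational hyperplane $\cud_i^\perp$ has distance at least a fixed $c_i>0$ from it) forces $Q(\cud,\cud)<t$, while away from the rays and from $\pm A^*$, compactness of a piece of the unit sphere gives $Q\leq -\delta<0$, so every contributing vector lies in the explicit ball $B(\mathbf 0,\sqrt{-t/\delta})$. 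You instead first use integrality of $Q(\cud,\cud)$ to reduce to a single fixed norm value $c\in\Z\cap[t,-1]$, and then argue by contradiction via sequential compactness: directions accumulate at a null ray, which by \eqref{eq4.1} and the impossibility of a totally isotropic plane in signature $(1,n-1)$ must be one of the $\R_{>0}.\cud_i$; null coordinates plus your ice-cream bound $\|\ww\|\leq Mb$ then show $b_j=Q(\cud_1,\cud^{(j)})$ is a strictly negative number tending to $0$, contradicting $b_j\in\Z$. The underlying mechanisms coincide (polyhedrality exactly at the rays, integrality of the pairing with $\cud_i$, compactness elsewhere), but your version never constructs the dual cone and works with the witnesses $\cuh_j\in A$ directly, which is arguably more transparent; what you give up is effectivity, since the paper's argument confines the whole set in an explicit ball, uniformly over all norms $\geq t$ at once, which is closer in spirit to the algorithmic wall-counting of Section \ref{sec5}.

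One small repair to your ice-cream step: $\overline A\cap U'$ is not literally a polyhedral cone, since its boundary contains part of the round cone $\partial U'$. What local polyhedrality actually gives is that $\overline A\cap U'=T\cap U'$, where $T$ is the (closed, polyhedral) tangent cone of $\overline A$ at $\cud_1$; one checks $T\subseteq\{b\geq 0\}$ and $T\cap\{b=0\}=\R.\cud_1$, and then your recession-cone/Minkowski--Weyl argument, applied to the slice $T\cap\{b=1\}$ rather than to $P\cap\{b=1\}$, yields $\|\ww\|\leq Mb$ on all of $T$, hence on $\overline A\cap U'$. This is a wording issue, not a gap; your observation that polyhedrality is indispensable here, because horoball-type tangential contact would destroy any linear bound $\|\ww\|\leq Mb$, identifies precisely the same use of the hypothesis as in the paper.
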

\begin{proof}
   Denote by $A^*:=\{\cud\in N\;|\; Q(\cud,\cuh)\geq 0 $ for all $\cuh\in A\}$ the closed dual cone of $A$. 

    It is clear that $A^*\supset C^+_0$, and  
    
    \begin{align}\label{eq442}
     & \partial A^*\cap \partial C^+_0=\partial A\cap \partial C^+_0=\cup_{i=1}^k \R_{>0}.\cud_i;\\
      &  \{\cud\in \Lambda\;|\; \exists\; \cuh\in A, Q(\cud,\cuh)=0\}=\{0\}\cup (\Lambda\setminus\mathrm{Int}(A^*\cup(-A^*)).\label{eq443}
    \end{align}

    For each $\cud_i$ spanning a ray of $\partial A\cap \partial C^+_0$, as $A$ is locally polyhedral and $C^+_0$ is circular, the boundary of the dual cone $A^*$ is given by $\cud_i^\perp$ locally at $\cud_i$. In other words, $\exists r_i>0$ such that 
    \begin{align}\label{eq4.3}
     &\partial A^*\cap B(\cud_i,r_i)=\cud_i^\perp\cap B(\cud_i,r_i); \\
     & \forall \cud\in B(\cud_i,r_i)\setminus A^*\text{, we have } Q(\cud,\cud_i)<0\text{ and  } \cud -\tfrac{1}2\cud_i\notin A^*\cup(-A^*).\label{eq4.4}
    \end{align}

    Since $\cud_i\in\Lambda$ and $Q(-,-)$ is with coefficient in $\Z$, the sublattice $\cud_i^\perp\cap \Lambda$ is of rank $n-1$ and $\exists c_i\in \Q_{>0}$ such that for every $\cud\in\Lambda$, the distance from $\cud$ to $\cud_i^\perp$ is $nc_i$ for some $n\in\Z$.

    Let $\epsilon_i:=\min\{r_i,-\frac{c_i}{t}\}$, then for every $\cuc\in(\tilde B(\cud_i,\epsilon_i)\cap \Lambda)\setminus A^*$, by \eqref{eq4.3}, the distance from $\cuc$ to $\R.\cud_i$ is at least $c_i$. It follows that $\cuc\in B(m\cud_i,m\epsilon_i)$ for some $m\epsilon_i\geq c_i$.

    By \eqref{eq4.4}, the vector $\cuc-\frac{m}2\cud_i\notin A^*\cup(-A^*)$, so $ Q(\cuc-\frac{m}2\cud_i,\cuc-\frac{m}2\cud_i)<0$. As $Q(-,-)$ is with $\Z$-coefficient and both $\cud_i,\cuc\in \Lambda$, by \eqref{eq4.4}, $Q(\cuc,\cud_i)\leq -1$.  It follows that \begin{align}\label{eq4.5}
        Q(\cuc,\cuc)& = Q(\cuc-\tfrac{m}2\cud_i,\cuc-\tfrac{m}2\cud_i)+mQ(\cuc,\cud_i)-\tfrac{m^2}4Q(\cud_i,\cud_i)<-m\leq -\tfrac{c_i}{\epsilon_i}\leq t.
    \end{align}
    So
    \begin{align}\label{eq4.7}
        \forall\; \cuc \in (\pm \tilde{B}(\cud_i,\epsilon_i)\cap \Lambda)\setminus(\pm A^*)=\pm((\tilde B(\cud_i,\epsilon_i)\cap \Lambda)\setminus A^*)\text{, we have }Q(\cuc,\cuc)<t.
    \end{align}

    Let  $M:=\partial B(\mathbf{0},1)\setminus\left(\pm\left(\mathrm{Int} A^*\bigcup \left(\cup_{i=1}^k\tilde B_i(\cud_i,\epsilon_i)\right)\right)\right)$. By \eqref{eq442}, $\mathrm{Int} A^*\bigcup \left(\cup_{i=1}^k\tilde B_i(\cud_i,\epsilon_i)\right)\cup\{\mathbf 0\}\supset \overline{C^+_0}$, so $M\cap \overline{C}_0=\emptyset$. Since $M$ is compact, the function $Q(\cud):=Q(\cud,\cud)$ has minimum value on $M$. As $\overline C_0=\{\cud\in N\;|\; Q(\cud,\cud)\geq 0\}$,  the value $\min_{\cud\in M}\{Q(\cud,\cud)\}=-\delta<0.$
    
It follows that 
\begin{align}\label{eq4.8}
    \forall\; \cud\notin \mathrm{Int} A^*\bigcup \left(\cup_{i=1}^k\tilde B_i(\cud_i,\epsilon_i)\right)\text{ and } |\cud|>\sqrt{-\tfrac{t}\delta}\text{, we have } Q(\cud,\cud)<t.
\end{align}
    As summary, by (\ref{eq443}, \ref{eq4.7}, \ref{eq4.8}),
    \begin{align*}
        \{\cud\in \Lambda\;|\; Q(\cud,\cud)\geq t,\exists \;\cuh\in A, Q(\cud,\cuh)=0\} \subset \Lambda\cap B\left(\mathbf{0}, \sqrt{-\tfrac{t}{\delta}}\right),
    \end{align*}
    which is finite.
\end{proof}

Now we can prove the main result on finite walls.

\begin{proof}[Proof of Proposition \ref{prop:finwalls}]
    Assume $v=v_1+v_2$ for some $v_i=(r_i,\ell_i,s_i)$ satisfying conditions in Definition \ref{def:numwall}. Denote by $\delta_i=r\cdot\ell_i-r_i\cdot \ell$, then by Definition \ref{def:numwall}.(b), $\exists H\in \Amp(S)$ such that $\delta_i.H=0$. By Hodge Index Theorem, $\delta_i^2\leq 0$.
    
    It follows by Definition \ref{def:numwall}.(a) that\begin{align*}
        &\Delta(v_i)=\ell_i^2-2r_i s_i\geq 0\\
        \implies & \delta_i^2 +2rr_i\ell.\ell_i-r^2_i\ell^2-2r_ir^2s_i\geq 0
        \iff  \tfrac{r}{r_i}\delta_i^2+2r^2\ell.\ell_i-r_ir\ell^2-2r^3s_i\geq 0\\
        \implies & \delta_i^2+2r^2\ell.\ell_i-r_ir\ell^2-2r^3s_i\geq \tfrac{r_i-r}{r_i}\delta^2_i\geq 0\\
         \implies & \delta_1^2+\delta_2^2+r^2\ell^2-2r^3s = \sum_{i=1,2}(\delta_i^2+2r^2\ell.\ell_i-r_ir\ell^2-2r^3s_i)\geq 0\\
         \implies & \delta_1^2\geq -r^2\ell^2+2r^3s-\delta^2_2\geq -r^2\ell^2+2r^3s.
   \end{align*}
   Let $NS(S)_\R$ be $N$, $NS(S)$ be $\Lambda$ equipped with the intersection as the bilinear form. Let $\Nef(S)$ be $A$, as it is rational polyhedral, the assumption \eqref{eq4.1} is satisfied. If $-r^2\ell^2+2r^3s\geq 0$, then it can be replaced by $-1$. So we may assume $-r^2\ell^2+2r^3s< 0$  By Lemma \ref{lem:toylattice}, 
   $$\#\{\delta_1\in\NS(S)\;|\;\delta^2_1\geq -r^2\ell^2+r^3s,\exists\; H\in\Amp(S),\delta_1.H=0\}<+\infty.$$
   As a numerical wall of $v$ is given by $\delta_1^\perp$, there are only finitely many of them.
\end{proof}

\subsection{Walls and chambers of a spherical Mukai vector}
From now on we will focus on the case of K3 surfaces and spherical bundles.

 The existence of semistable spherical bundles is due to Kuleshov \cite{Kuleshov:spherical}. 
\begin{Thm}[{\cite[Theorem 10.2.7]{HuybrechtsK3book}}]\label{thm:existsph}
     Let $S$ be a smooth projective K3 surface over $\C$ with an ample line bundle $H$. For any spherical Mukai vector $v$, there exists a $\mu_H$-semistable vector bundle $E$ with $v(E)=v$. If $E$ is $\mu_H$-stable, then it is the unique $\mu_H$-semistable bundle with Mukai vector $v$.\hfill $\Box$
\end{Thm}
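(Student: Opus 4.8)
The plan is to separate the two assertions, disposing of the formal uniqueness claim first and then addressing the genuinely harder existence statement. For uniqueness I would take $E$ to be $\mu_H$-stable and $E'$ an arbitrary $\mu_H$-semistable bundle with $v(E')=v(E)=v$, and exploit that $\langle v,v\rangle=-2$ forces $\chi(E,E')=-\langle v,v\rangle=2$. Serre duality on the K3 surface gives $\Ext^2(E,E')\cong\Hom(E',E)^\vee$, so
$$\hom(E,E')+\hom(E',E)\geq\chi(E,E')=2>0,$$
and there is a non-zero morphism in at least one direction. The key point is that any non-zero $\phi\colon E\to E'$ must be an isomorphism: its image is a quotient of the stable $E$ (slope $\geq\mu_H(E)$, strict unless $\phi$ is injective) and a subsheaf of the semistable $E'$ (slope $\leq\mu_H(E')=\mu_H(E)$), which forces $\phi$ injective; then $E'/E$ has rank $0$ and $\ch_1=0$, and since $v(E')=v(E)$ it has length $0$, so $E\cong E'$. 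A non-zero $E'\to E$ is treated symmetrically. This step needs only stability and equal slopes, not local freeness.

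For existence I would first record that a spherical $v$ is automatically primitive ($v=kv'$ gives $k^2\langle v',v'\rangle=-2$ with $\langle v',v'\rangle$ even, so $k=1$), and then build the bundle first for a $v$-generic polarization $H_0$ and transport it to the given $H$. Over such an $H_0$ semistability coincides with stability, and the moduli space of $H_0$-semistable sheaves with vector $v$ is non-empty — this is the essential input, available either through Yoshioka's non-emptiness theorem or through Kuleshov's original induction on the rank whose base case is line bundles. Any stable member $E_0$ is rigid, since $\hom(E_0,E_0)=\ext^2(E_0,E_0)=1$ together with $\chi(E_0,E_0)=2$ give $\ext^1(E_0,E_0)=0$; and a stable rigid sheaf on a K3 surface is locally free (Mukai), its reflexive hull $E_0^{\vee\vee}$ being a bundle whose finite-length quotient $E_0^{\vee\vee}/E_0$, if non-zero, would contradict $\ext^1(E_0,E_0)=0$. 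Thus $E_0$ is already a $\mu_{H_0}$-stable bundle with the correct Mukai vector.

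To reach an arbitrary ample $H$ I would choose $H_0$ in the open chamber whose closure contains $H$; such a chamber exists because the slope walls of $v$ are locally finite in the interior of $\Amp(S)$ (near a fixed interior class the candidate destabilizing classes lie in a bounded region of the negative-definite part, exactly as in the proof of Lemma \ref{lem:toylattice}, with no polyhedrality assumption needed). By Lemma \ref{lem:chaningpolarizationsegment} the $\mu_{H_0}$-stable bundle $E_0$ stays $\mu_L$-stable along the open segment from $H_0$ towards $H$ and degenerates at worst to a strictly $\mu_H$-semistable sheaf at the endpoint, remaining literally the same locally free sheaf throughout; this is the required $\mu_H$-semistable bundle.

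The hardest part is precisely this existence-and-local-freeness core. Everything else is formal bookkeeping with slopes and Mukai vectors, but two points carry the real content: proving non-emptiness of the moduli space for a prescribed spherical class (Yoshioka/Kuleshov), and then upgrading the resulting stable sheaf to a locally free one. I expect the local-freeness step to demand the most care, since simple examples show that a merely semistable sheaf with spherical Mukai vector need not be locally free — one can excise a finite-length quotient from a bundle of larger self-intersection — so it is genuinely the rigidity of the stable representative, not mere semistability, that forces local freeness.
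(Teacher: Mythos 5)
You should know at the outset that the paper contains no proof of this statement to compare against: Theorem \ref{thm:existsph} is imported verbatim from \cite[Theorem 10.2.7]{HuybrechtsK3book}, attributed to Kuleshov \cite{Kuleshov:spherical}, and closed with a box precisely because it is used as a black box. Judged on its own terms, your proposal is a correct reconstruction of the standard argument and is at least as complete as the paper's treatment. The uniqueness half is fully proved as you write it: $\chi(E,E')=-\langle v,v\rangle=2$ together with Serre duality produces a non-zero map in one direction, comparing the image against stability of $E$ and semistability of $E'$ at equal slope forces injectivity, and the cokernel then has rank $0$, $\ch_1=0$, and length $\ch_2=0$, hence vanishes; note this in fact proves the stronger statement that $E$ is the unique $\mu_H$-semistable torsion-free sheaf with vector $v$, not merely the unique such bundle. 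The existence half correctly isolates the two genuinely hard inputs and defers them to the literature: non-emptiness of the moduli space for a $v$-generic polarization (Yoshioka, or Kuleshov's induction on rank), and Mukai's lemma that a stable sheaf with $\ext^1(E_0,E_0)=0$ is locally free. Since that is exactly the content the paper itself imports through the citation, deferring it is not a gap here. Your transport step is also sound, and your parenthetical observation is the right one: local finiteness of numerical walls near a compact subset of $\Amp(S)$ follows from the compactness portion of the proof of Lemma \ref{lem:toylattice} (Hodge index plus the Bogomolov-type lower bound on $\delta^2$) and needs no polyhedrality, which is required only for the global finiteness of Proposition \ref{prop:finwalls}. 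Combined with Lemma \ref{lem:ssemiimpliesonwall} and Lemma \ref{lem:chaningpolarizationsegment} (the latter is stated for rational classes, which is harmless since $H$ is integral and $H_0$ may be chosen rational in the open chamber), the locally free sheaf $E_0$ stays $\mu_L$-stable on the segment from $H_0$ up to, but possibly excluding, $H$, and is therefore $\mu_H$-semistable at the endpoint while remaining the same bundle, exactly as you claim.
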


\begin{PropDef}\label{propdef:finstabsph}
    Let $S$ be a smooth projective K3 surface with rational polyhedral nef cone and $v$ be a spherical Mukai vector. Then $$\mathsf{H}_S(v):=\#\{E\in\Coh(S)\;|\;v(E)=v,\; E\text{ is }\mu_H\text{-stable for some ample divisor } H.\}$$ is finite.
\end{PropDef}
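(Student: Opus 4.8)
The plan is to combine the existence and uniqueness result of Theorem \ref{thm:existsph} with the finiteness of numerical walls established in Proposition \ref{prop:finwalls}. The key observation is that the counting function $\mathsf{H}_S(v)$ gathers together sheaves that are each $\mu_H$-stable for \emph{possibly different} polarizations $H$; to bound the total count we need to show that only finitely many distinct sheaves can arise as $H$ ranges over all of $\Amp(S)$. First I would note that the spherical Mukai vector $v=(\rk,\ell,c)$ with $\rk>0$ has underlying Chern character class with $(\rk,\ell)$ primitive (or reducible to the primitive case by a standard argument), so that Proposition \ref{prop:finwalls} applies: there are only finitely many numerical walls $\mathcal{W}_1,\dots,\mathcal{W}_N$ of $v$ inside $\Amp(S)$, and these subdivide $\Amp(S)$ into finitely many open chambers $\mathcal{C}_1,\dots,\mathcal{C}_M$.

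The heart of the argument is that on each chamber the $\mu_H$-stable spherical bundle is unique and constant. Within a single chamber $\mathcal{C}_j$, no polarization lies on a numerical wall of $v$, so by Lemma \ref{lem:ssemiimpliesonwall} no sheaf with class $v$ is strictly $\mu_H$-slope semistable for $H\in\mathcal{C}_j$. Hence for every $H$ in $\mathcal{C}_j$, Theorem \ref{thm:existsph} furnishes a $\mu_H$-semistable bundle, which must in fact be $\mu_H$-\emph{stable} (otherwise it would be strictly semistable, forcing $H$ onto a wall), and this stable bundle is the \emph{unique} $\mu_H$-semistable bundle with Mukai vector $v$. To see it does not depend on the point $H\in\mathcal{C}_j$, I would invoke Lemma \ref{lem:chaningpolarizationsegment}: given two polarizations $H,H'\in\mathcal{C}_j$, the segment $L=(1-t)H+tH'$ stays in the (convex) chamber, so no $L$ on it lies on a wall; if the unique stable bundle $E_H$ for $H$ failed to be $\mu_{H'}$-stable, then part (2) of that lemma would produce a parameter $t$ at which $E_H$ is strictly $\mu_L$-semistable, contradicting that $L$ avoids every wall. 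Therefore $E_H$ is $\mu_{H'}$-stable, and by the uniqueness clause of Theorem \ref{thm:existsph} it must coincide with $E_{H'}$. Thus each chamber contributes exactly one sheaf to the count.

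Finally I would assemble the pieces. Every sheaf counted by $\mathsf{H}_S(v)$ is $\mu_H$-stable for some $H\in\Amp(S)$; such an $H$ lies in exactly one chamber $\mathcal{C}_j$ (it cannot lie on a wall, since a $\mu_H$-stable bundle is certainly $\mu_H$-semistable and Theorem \ref{thm:existsph}'s uniqueness would then be incompatible with the strict semistability forced by Lemma \ref{lem:ssemiimpliesonwall} on a wall—more carefully, stability is an open condition so the locus of $H$ for which a fixed $E$ is stable is open and hence meets a chamber). The sheaf is then identified with the unique stable bundle attached to that chamber. Since there are finitely many chambers, we obtain
\begin{align*}
    \mathsf{H}_S(v)\leq M<+\infty,
\end{align*}
as desired.

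The main obstacle I anticipate is the bookkeeping around the boundary case of polarizations lying on walls, and the passage between the Chern-character conventions of Section \ref{sec4} and the Mukai-vector conventions used in the definition of $\mathsf{H}_S(v)$. Specifically, I must be careful that a $\mu_H$-stable sheaf does not secretly sit on a wall in a way that would let the same sheaf be counted from two adjacent chambers, and that the primitivity hypothesis $(\rk,\ell)$ primitive needed for Proposition \ref{prop:finwalls} is either automatic for a spherical Mukai vector or can be arranged without loss of generality. Verifying that slope stability (rather than Gieseker stability) is the correct notion throughout, and that the $\mu_H$-stable bundle produced by Theorem \ref{thm:existsph} is genuinely a bundle rather than merely a torsion-free sheaf, are the remaining details to pin down, but I expect these to follow from the general theory of spherical classes on K3 surfaces.
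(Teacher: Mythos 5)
Your proof is correct and follows essentially the same route as the paper: primitivity of $(\rk,\ell)$ for a spherical class, Proposition \ref{prop:finwalls} for finiteness of numerical walls and chambers, Theorem \ref{thm:existsph} together with Lemmas \ref{lem:chaningpolarizationsegment} and \ref{lem:ssemiimpliesonwall} to produce one constant slope-stable bundle per chamber, and an absorption argument for polarizations lying on walls. One caveat: your parenthetical claim that a polarization $H$ making $E$ stable ``cannot lie on a wall'' is false, and the reasoning behind it inverts Lemma \ref{lem:ssemiimpliesonwall} --- that lemma says strict semistability forces $H$ onto a numerical wall, not that every sheaf with class $v$ is strictly semistable once $H$ is on a wall. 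Numerical walls need not be actual walls (compare Proposition and Definition \ref{propdef:actualwall}), so stable bundles can perfectly well exist at polarizations on numerical walls. Your fallback argument is the right one and is what the paper actually does: if $E$ is $\mu_H$-stable with $H$ on a wall, then $E$ remains stable for all polarizations in the neighboring chambers; in the paper this is deduced again from Lemmas \ref{lem:chaningpolarizationsegment} and \ref{lem:ssemiimpliesonwall} (a segment from $H$ into a nearby chamber meets walls only at $H$ itself, so no strictly semistable point can occur on it), rather than by invoking openness of slope stability as a black box --- that openness is only recorded later, in Remark \ref{rem:slopevspoly}, and within this paper's framework it is itself a consequence of those two lemmas. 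With that repair, such an $E$ coincides with the stable bundle of an adjacent chamber and contributes nothing new, which completes the count exactly as in the paper.
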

\begin{proof}
    As $v$ is spherical, it is clear that $(r,\ell)$ is primitive. By Proposition \ref{prop:finwalls}, there are finitely many numerical walls of $v$. As $\Amp(S)$ is convex, it is divided by the numerical walls into finitely many numerical chambers.
    
    For all $H$ and $H'$ in the same numerical chamber, by Lemma \ref{lem:chaningpolarizationsegment} and \ref{lem:ssemiimpliesonwall}, $M^{ss}_H(v)=M^s_H(v)=M^s_{H'}(v)$. By Theorem \ref{thm:existsph}, in each numerical chamber there is a unique slope stable bundle with Mukai vector $v$.

    For each polarization $H$ on the numerical wall of $v$, if $M^s_H(v)=\{E\}$ is non-empty, then by Lemma \ref{lem:chaningpolarizationsegment} and \ref{lem:ssemiimpliesonwall}, the bundle $E$ is slope stable in all neighbor numerical chambers. So slope stable bundles on the numerical wall never contribute to the count. Therefore, $\mathsf H(v)$ is bounded by the number of numerical chambers which is finite.
\end{proof}

\begin{PropDef}\label{propdef:actualwall}
     Let $S$ be a smooth projective K3 surface and $v$ be a spherical Mukai vector. Let $\cW$ be a numerical wall of $v$, then the following are equivalent.
     \begin{enumerate}[(1)]
         \item $\forall\; H\in\cW$, $M^s_H(v)=\emptyset$.
         \item $v=v_1+\dots+v_s$, $s\geq2$, for some spherical Mukai vectors $v_i$ such that $\cW(v,v_i)=\cW$  and $M^s_H(v_i)\neq \emptyset$ for some $H\in\cW$ and all $i$.
         \item $\cW=\cW(v,w)$ for some  spherical Mukai vector $w$ satisfying $\rk(w)<\rk(v)$ and  $\langle w,v\rangle <0$.
         \item[(3)$'$]  $\cW=\cW(v,w)$ for some  spherical Mukai vector $w$ satisfying $\rk(w)\leq\frac{\rk(v)}2$,  $\langle w,v\rangle <0$ and $M^s_H(w)\neq \emptyset$ for some $H\in\cW$.
         \item[(3)$''$] $\cW=\cW(v,w)$ for some  spherical Mukai vector $w$ satisfying $\rk(w)\leq\tfrac{1}{2}\rk(v)$ and  $\langle w,v\rangle <0$.
         
         \item $\forall\; H_\pm\in\Amp(S)$ in two different components of $\Amp(S)\setminus \cW$, there is no spherical bundle $E$ with $v(E)=v$ both $\mu_{H^+}$ and $\mu_{H^-}$-slope stable.
     \end{enumerate}
     We will call such $\cW$ an \emph{actual wall} of $v$ and each connected component of $\Amp(S)\setminus(\cup_{\text{actual walls }}\cW)$ a \emph{chamber} of $v$. We will call a spherical Mukai vector $w$ as that in (3) a \emph{destabilizing factor} of $v$.
\end{PropDef}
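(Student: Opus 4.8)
The plan is to run the cycle $(1)\Rightarrow(2)\Rightarrow(3)'\Rightarrow(3)''\Rightarrow(3)\Rightarrow(1)$ together with the separate equivalence $(1)\Leftrightarrow(4)$, the two steps $(3)'\Rightarrow(3)''\Rightarrow(3)$ being immediate from the definitions. Fix once and for all a polarization $H$ in the relative interior of $\cW\cap\Amp(S)$ lying on no other numerical wall of $v$ (possible since by Proposition \ref{prop:finwalls} there are finitely many); then a class $u$ has the same slope as $v$ at $H$ exactly when $\cW(v,u)=\cW$. The two inputs I will use repeatedly are Theorem \ref{thm:existsph} and the Mukai dimension formula: the moduli space of $H$-semistable sheaves in a class $u$ is either empty or of dimension $\langle u,u\rangle+2$, so for a spherical class it is a single point, i.e. the semistable object is unique up to $S$-equivalence.

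For $(1)\Rightarrow(2)$ I would take the semistable bundle $E$ with $v(E)=v$ from Theorem \ref{thm:existsph}; by $(1)$ it is not stable, so its semisimplification is a nontrivial polystable sheaf $\bigoplus_k A_k^{\oplus m_k}$ with the $A_k$ pairwise non-isomorphic and stable of the common slope $\mu_H(v)$, and $\sum_k m_k\,v(A_k)=v$. Each $A_k$ is simple, so $\langle v(A_k),v(A_k)\rangle\ge -2$, and the dimension formula upgrades this to equality: if some factor had $\langle v(A_k),v(A_k)\rangle\ge 0$ it would deform in a family of dimension $\ge 2$, and direct-summing such deformations with the remaining factors would produce a positive-dimensional family of pairwise non-$S$-equivalent semistable sheaves in class $v$, contradicting that $M^{ss}_H(v)$ is a point. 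Hence every $v(A_k)$ is spherical; relabelling the classes with multiplicity as $v_1,\dots,v_s$ (with $s\ge 2$ since $E$ is not stable) gives $(2)$, the conditions $\cW(v,v_i)=\cW$ and $M^s_H(v_i)\neq\emptyset$ being clear.

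The crux is $(2)\Rightarrow(3)'$, where I must produce a destabilizing class of rank at most $\tfrac12\rk(v)$. Write $v=\sum_k m_k w_k$ with $w_k$ the distinct spherical factor classes and set $b_k:=\langle w_k,v\rangle$, so $\sum_k m_k b_k=\langle v,v\rangle=-2$ and some $b_k<0$; for distinct stable factors of the same reduced Hilbert polynomial $\Hom(A_k,A_l)=\Hom(A_l,A_k)=0$, whence by Serre duality $\langle w_k,w_l\rangle=\ext^1(A_k,A_l)\ge 0$ for $k\neq l$. Suppose, for contradiction, that every destabilizing class (those with $b_k<0$) had rank $>\tfrac12\rk(v)$. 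Since the ranks sum with multiplicity to $\rk(v)$, at most one class can exceed half the rank, and it must occur with multiplicity one; so there is a unique destabilizing class $w:=w_{k_0}$, with $m_{k_0}=1$ and $b_{k_0}\le -2$. Feeding $\langle w,w\rangle=-2$ and $\langle w,w_l\rangle\ge 0$ into $b_{k_0}=-2+\sum_{l\neq k_0}m_l\langle w,w_l\rangle$ forces $b_{k_0}=-2$ and $w\perp w_l$ for all $l\neq k_0$. Then the complementary class $v-w=\sum_{l\neq k_0}m_l w_l$ satisfies $\langle v-w,v-w\rangle=\langle v,v\rangle-2\langle v,w\rangle+\langle w,w\rangle=0$, i.e. it is isotropic, and it is realized by the semistable sheaf $\bigoplus_{l\neq k_0}A_l^{\oplus m_l}$. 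An isotropic class carries a $2$-dimensional family of semistable sheaves, and direct-summing these with $A_{k_0}$ once more contradicts $\dim M^{ss}_H(v)=0$. Hence some destabilizing factor has rank $\le\tfrac12\rk(v)$; being one of the $A_k$ it is spherical with $M^s_H(\cdot)\neq\emptyset$, which is $(3)'$.

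For $(3)\Rightarrow(1)$, suppose some $E\in M^s_H(v)$ existed and let $A$ be the semistable bundle in class $w$ from Theorem \ref{thm:existsph}; it has the same slope as $E$ since $\cW(v,w)=\cW$. From $\chi(A,E)=-\langle w,v\rangle>0$ and Serre duality one gets $\hom(A,E)+\hom(E,A)\ge\chi(A,E)>0$. But the rank inequality $\rk(w)<\rk(v)$ rules out both Hom-spaces: a nonzero $A\to E$ would have image of rank $\le\rk(A)<\rk(E)$, while stability of $E$ forces its saturation to be all of $E$, a contradiction; and a nonzero $E\to A$ would be injective by stability of $E$, giving $\rk(v)\le\rk(w)$. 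Thus $M^s_H(v)=\emptyset$ at the generic point of $\cW$, hence on all of $\cW$ by constancy of slope-stability off the sub-walls. Finally $(1)\Leftrightarrow(4)$ is Lemma \ref{lem:chaningpolarizationsegment}: a bundle slope-stable at $H^\pm$ on the two sides of $\cW$ is slope-stable along the joining segment, so at its crossing point with $\cW$, contradicting $(1)$; conversely a slope-stable bundle at a generic point of $\cW$ remains so in a neighbourhood meeting both sides, contradicting $(4)$. I expect the rank bound in $(2)\Rightarrow(3)'$ — the orthogonality/isotropy dichotomy powered by $\dim M^{ss}_H(v)=0$ — to be the main obstacle; the remaining care is the bookkeeping relating slope- and Gieseker-stability needed to justify $\langle w_k,w_l\rangle\ge 0$ and to identify the stable factors, which is routine for spherical classes.
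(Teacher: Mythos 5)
Your overall architecture (the cycle through (1), (2), (3)$'$, (3)$''$, (3) plus the separate equivalence with (4)) matches the paper, and your (3)$\Rightarrow$(1) and (1)$\Leftrightarrow$(4) arguments are essentially the paper's (3)$\Rightarrow$(4)$\Rightarrow$(1) steps and are sound. The genuine gap is exactly where you predicted it: both your (1)$\Rightarrow$(2) and your (2)$\Rightarrow$(3)$'$ are powered by the claim that $M^{ss}_H(v)$ is a single point, which you justify by asserting that the moduli space of $H$-\emph{semistable} sheaves of class $u$ is empty or of dimension $\langle u,u\rangle+2$. That dimension formula is a statement about the \emph{stable} locus (smoothness of $M^s$ at a stable sheaf, with tangent space $\Ext^1$ of dimension $\langle u,u\rangle+2$); it says nothing about properly semistable points, and under hypothesis (1) the stable locus $M^s_H(v)$ is empty, so the formula gives no information about $M^{ss}_H(v)$ at all. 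Indeed, ``every $H$-semistable sheaf of spherical class has spherical JH factors, hence $M^{ss}_H(v)$ is $0$-dimensional'' is essentially the content being proven, so the argument is circular here. The same problem recurs in your isotropy dichotomy: the assertion that the isotropic class $v-w$ ``carries a $2$-dimensional family of semistable sheaves'' at the wall polarization $H$ is unjustified. Extensions among the fixed rigid spherical factors $A_l$ all stay in one $S$-equivalence class, so to get a positive-dimensional family of pairwise non-$S$-equivalent sheaves you would need stable sheaves of class $v-w$ (or of classes refining it) at $H$; Yoshioka-type nonemptiness requires a generic polarization and a primitive class, and you have neither -- $H$ lies on a wall and $v-w$ need not be primitive. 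So both moduli-dimension inputs fail, and with them the two central steps.

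The paper closes these holes by different, and essentially Hom-theoretic, means: it anchors everything to a bundle $E$ that is $\mu_{H_+}$-stable in a \emph{neighboring chamber} and strictly $\mu_H$-semistable on the wall (obtained from Theorem \ref{thm:existsph} together with Lemmas \ref{lem:chaningpolarizationsegment} and \ref{lem:ssemiimpliesonwall}), rather than to the merely $H$-semistable Kuleshov bundle you use. Sphericality of the JH factors then follows by the argument of \cite[Lemma 6.2]{BM:walls}, which exploits that $E$ is stable (hence indecomposable) just off the wall -- in the orthogonal spherical-plus-isotropic configuration you try to exclude, all $\Ext^1$'s between distinct factors vanish and $E$ would decompose -- with no appeal to moduli dimensions. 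For the rank bound in (3)$'$, the paper proves Lemma \ref{lem:410}, the inequality $a_i+\langle v,w_i\rangle\geq 0$ for the factor classes of this particular $E$, by counting $\Hom(E_i,E)$ against the multiplicity $a_i$; applied to your unique big-rank destabilizer (multiplicity $1$) it forces $\langle w,v\rangle=-1$, directly contradicting the $\langle w,v\rangle\leq -2$ you derive, and it produces a second destabilizing factor of rank at most $\tfrac{1}{2}\rk(v)$. If you want to keep your outline, replace the two moduli-dimension claims by these two ingredients: choose $E$ stable just off the wall, and rule out isotropic factors and big-rank-only destabilizers by $\Hom$/$\Ext$ computations against $E$ rather than by dimension counts. (A smaller point: your pairing inequality $\langle w_k,w_l\rangle\geq 0$ via Hom-vanishing between distinct stable factors is only automatic for Gieseker stability or locally free factors; for slope stability, where JH factors are defined only up to modification in codimension $2$, this needs the care the paper takes in the proof of Lemma \ref{lem:410}.)
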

\begin{proof}
    (1)$\implies $(2): Let $H$ be an ample divisor on $\cW$ but not on any other numerical walls of $v$. Let $H_+$ be in the neighbor numerical chamber of $H$. By Theorem \ref{thm:existsph}, there exists a $\mu_{H_+}$-stable and $\mu_{H}$-slope semistable spherical  bundle $E$ with Mukai vector $v$. By (1), $E$ is strictly $\mu_H$-semistable. By the same argument as that in \cite[Lemma 6.2]{BM:walls}, the Jordan--H\"older filtration factor $E_i$'s of $E$ are all spherical. Their Mukai vectors give the $v_i$'s satisfying the condition in (2).

    We may write $v=a_1w_1+\dots +a_tw_t$ by combining similar terms of Mukai vectors of Jordan--H\"older factors of $E$. The following lemma will be useful for the rest of the argument. 
    \begin{Lem}\label{lem:410}
        For every $1\leq i\leq t$,
    \begin{align}
        a_i+\langle v,w_i\rangle \geq 0. \label{eq410}
    \end{align}
    \end{Lem}
    \begin{proof}[Proof of the Lemma]
     Denote by $E_i$  the unique object in $M^s_{H_0}(w_i)$. As both $E_i$ and $E$ are $\mu_{H_+}$-slope stable, the spaces $\Hom(E,E_i)$ and $\Hom(E_i,E)$ cannot both be non-zero. If both of them are zero, then $\langle v,w_i\rangle \geq 0$ and \eqref{eq410} holds automatically. 
     
     Without loss of generality, we may assume $\Hom(E_i,E)\neq 0$ and $\Hom(E,E_i)=\Hom(E_i,E[2])=0$. It follows that 
     \begin{align}\label{eq4112}
         \hom(E_i,E)+\langle v_i,v\rangle=\hom(E_i,E)-\chi(E_i,E)=\hom(E_i,E[1])\geq0.
     \end{align}
      We may exhaust the $E_i$ as subfactors in the Jordan--H\"older filtration of $E$. As $\Hom(E_i,E_i[1])=0$, this subfactor is $E_i^{\oplus a}$ for some $a\in\Z_{\geq0}$. It follows that $E$ fits into the short exact sequence  $0\to E_i^{\oplus a}\to E\to E'\to 0$ of bundles for some $\Hom(E_i,E')=0$. In particular, $a_i\geq a=\hom(E_i,E)$. Together with \eqref{eq4112}, the inequality \eqref{eq410} holds.
    \end{proof}

    (1)$\implies$(3)$'$: Note that $\sum_{i=1}^s\langle v_i,v\rangle =\langle v,v\rangle =-2$, there exists $v_i$ such that $\langle v_i,v\rangle<0$. Without the loss of generality, we may assume $i=1$. If $\rk(v_1)\leq \tfrac{1}{2}\rk(v)$, then $v_1$ satisfies all conditions on $w$ in  (3)$'$.
    
    Assume that $\rk(v_1)>\tfrac{1}{2}\rk(v)$, then $v_1\neq v_i$ for any $i\geq 2$.
    
     By Lemma \ref{lem:410},  $\langle v_1,v\rangle=-1$, it then follows that $\sum^s_{i=2}\langle v_i,v\rangle=-1$. There exists $i\geq 2$ such that $\langle v_i,v\rangle<0$. Note that $\rk(v_1)>\tfrac{1}{2}\rk(v)$, so $\rk(v_i)<\tfrac{1}{2} \rk(v)$ and $v_i$ satisfies all conditions on $w$ in  (3)$'$.
    
    (3)$'\implies$(3)$''\implies$(3): Obvious.
    
    (2)$\implies$(3): Note that $\sum_{i=1}^s\langle v_i,v\rangle =\langle v,v\rangle =-2$, there exists $v_i$ such that $\langle v_i,v\rangle<0$.
    
    (3)$\implies$(4): Suppose $E$ with $v(E)=v$ is both $\mu_{H_\pm}$-stable, then by Lemma \ref{lem:chaningpolarizationsegment}, $E$ is $\mu_H$-stable for some $H$ on $\cW$. Let $w$ be a spherical Mukai vector as that in (3). By Theorem \ref{thm:existsph}, there exists a $\mu_H$-semistable spherical bundle $F$ with $v(F)=w$.

    Since $H\in \cW=\cW(v,w)$, the slope $\mu_H(E)=\mu_H(F)$. Since $\chi(F,E)=-\langle w,v\rangle >0$, we have $\Hom(E,F)\neq 0$ or $\Hom(F,E)\neq 0$. As $E$ is a $\mu_H$-stable vector bundle, this can only happen if $E$ is a subbundle or quotient bundle of $F$. However, $\rk(E)=\rk(v)>\rk(w)=\rk(F)$ by assumption, we get the contradiction. So (4) holds.

    (4)$\implies$(1): Suppose $E$ is $\mu_H$-slope stable for some $H\in\cW$, then  by Lemma \ref{lem:chaningpolarizationsegment} and \ref{lem:ssemiimpliesonwall}, the bundle $E$ is slope stable in all neighbor numerical chambers of $H$. This contradicts the assumption of (4). So (1) holds.
    \end{proof}

    \begin{Rem}\label{rem:expressionofsph}
        Note that $v$ might have different decompositions as that in Proposition and Definition \ref{propdef:actualwall}.(2) even with respect to a very general $H\in\cW$. Lemma \ref{lem:410} only holds for the sum of Mukai vectors of Jordan--H\"older factors of a $\mu_{H_+}$-slope stable (or more generally, a $\mu_H$-semistable) $E$. In general, Lemma \ref{lem:410} does not hold for an arbitrary decomposition of $v$ as that in Proposition and Definition \ref{propdef:actualwall}.(2).
    \end{Rem}

\begin{Cor}\label{cor:HvdeterminedbyAmp}
    Let $S$ be a smooth projective K3 surface with a rational polyhedral nef cone and $v=(\rk,\ell,s)$ be a spherical Mukai vector, then $\mathsf H(v)$ is the number of actual chambers.
   
    Let $S'$ be another K3 surface. Assume that there exists an injective group homomorphism $f:\NS(S')\to\NS(S)$ preserving the intersection numbers  and $f(\Nef(S'))\subseteq \Nef(S)$. Then $\mathsf H_{S'}((\rk,\ell,s))\leq \mathsf H_S((\rk,f(\ell),s))$. In particular,  $\mathsf H_S(v)$ only depends on $(\Nef(S),(-,-)_{\text{intersection}})$.
\end{Cor}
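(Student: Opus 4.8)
The plan is to prove the three assertions in turn: first that $\mathsf H_S(v)$ equals the number of actual chambers, then the monotonicity inequality, and finally the numerical invariance by applying the inequality in both directions. Throughout I work with the numerically defined wall-and-chamber structure, which is legitimate because by Proposition \ref{prop:finwalls} there are only finitely many numerical walls, and by the equivalences in Proposition and Definition \ref{propdef:actualwall} the actual walls are singled out purely numerically (condition (3)).

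For the first assertion I would set up the assignment sending an actual chamber $C$ to the bundle $E_C$ that is $\mu_H$-stable for $H$ in any numerical chamber contained in $C$, where by Theorem \ref{thm:existsph} each numerical chamber carries a unique slope-stable bundle with Mukai vector $v$. It remains to check this assignment is well defined, injective, and surjective onto the set counted by $\mathsf H_S(v)$. Well-definedness amounts to showing $E$ is unchanged across a non-actual numerical wall $\cW$: since $\cW$ fails condition (1), there is $H_\ast\in\cW$ (taken generic on $\cW$, using openness of stability) carrying a $\mu_{H_\ast}$-stable bundle $E_\ast$, and Lemma \ref{lem:chaningpolarizationsegment} propagates the stability of $E_\ast$ to both adjacent chambers, forcing the stable bundles there to coincide with $E_\ast$. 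Injectivity is condition (4): if $E_{C_1}=E_{C_2}$ for distinct actual chambers, Lemma \ref{lem:chaningpolarizationsegment}(1) makes this common bundle stable along a segment joining interior points of $C_1$ and $C_2$, hence stable on both sides of an actual wall separating them, contradicting (4). Surjectivity holds because any bundle counted by $\mathsf H_S(v)$ is $\mu_H$-stable for some $H$, hence, by openness, for a generic $H'$, so it is the bundle attached to the actual chamber of $H'$.

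For the monotonicity, given a destabilizing factor $w'=(\rk_{w'},\ell_{w'},c_{w'})$ of $v'=(\rk,\ell,s)$ I set $w:=(\rk_{w'},f(\ell_{w'}),c_{w'})$; since $f$ preserves the intersection form one checks $\langle w,w\rangle=\langle w',w'\rangle=-2$ and $\langle w,v\rangle=\langle w',v'\rangle<0$ with $\rk(w)<\rk(v)$, so $w$ is a destabilizing factor of $v=(\rk,f(\ell),s)$ and $\cW(v',w')=f_\R^{-1}(\cW(v,w))$. The delicate point is that $\cW(v,w)$ genuinely meets $\Amp(S)$: writing $\delta':=\rk\,\ell_{w'}-\rk_{w'}\ell$ and choosing $H'_\pm\in\Amp(S')$ with $\delta'\cdot H'_\pm$ of opposite signs, the classes $f(H'_\pm)$ lie in $\Nef(S)$, and for a fixed ample $A\in\Amp(S)$ the perturbed classes $f(H'_\pm)+\epsilon A$ are ample (nef plus ample) while $f(\delta')\cdot(f(H'_\pm)+\epsilon A)$ still has opposite signs for small $\epsilon$; hence $\cW(v,w)=f(\delta')^\perp$ crosses $\Amp(S)$ and is an actual wall of $v$ by condition (3). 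This gives $\mathcal A'\subseteq f_\R^{-1}(\mathcal A)$ for the sets $\mathcal A,\mathcal A'$ of actual walls of $v,v'$, so $\mathsf H_{S'}(v')$ is at most the number of connected components of $\Amp(S')\setminus f_\R^{-1}(\bigcup\mathcal A)$. I then send each such component to an actual chamber of $v$ by the rule $H'\mapsto$ the chamber of $f(H')+\epsilon A$ for small $\epsilon$: this perturbation lands the point in $\Amp(S)$ off the finitely many actual walls, the resulting chamber is independent of the chosen point and of small $\epsilon$, and two points on opposite sides of a wall $f_\R^{-1}(\cW)$ map to opposite sides of $\cW$, so the rule is injective. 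Hence the count of those components is at most $\mathsf H_S(v)$, and $\mathsf H_{S'}(v')\le\mathsf H_S(v)$.

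The numerical invariance is then immediate: if $g\colon\NS(S)\to\NS(S'')$ is an isometry with $g(\Nef(S))=\Nef(S'')$, applying the inequality to $g$ and to $g^{-1}$ gives $\mathsf H_S(v)=\mathsf H_{S''}(g_\ast v)$, so $\mathsf H_S(v)$ depends only on $(\Nef(S),(-,-)_{\text{intersection}})$ together with $v$. The main obstacle throughout is the boundary phenomenon in the monotonicity step, namely that $f(\Amp(S'))$ need not lie in $\Amp(S)$ but may fall into a proper face of $\Nef(S)$; the uniform ``nef $+$ ample $=$ ample'' perturbation by a fixed $A\in\Amp(S)$ is exactly what circumvents this, both for producing genuine walls of $v$ and for defining the injection on chambers. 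A secondary technical point is the genericity-within-a-wall needed in the first assertion, which is handled by openness of stability.
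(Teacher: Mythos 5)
Your overall architecture (chamber--bundle bijection for the first assertion, transfer of destabilizing factors plus the ``nef $+$ ample'' perturbation for the second) is the right one, but the monotonicity step contains a claim that is false as stated. You bound $\mathsf H_{S'}(v')$ by the number of connected components of $\Amp(S')\setminus f_\R^{-1}\left(\bigcup\mathcal A\right)$, where $\mathcal A$ is the set of \emph{all} actual walls of $v$ on $S$. This reduction is valid only if every preimage $f_\R^{-1}(\cW)$, $\cW\in\mathcal A$, is a proper hyperplane of $\NS(S')_\R$; when $\rk\NS(S')<\rk\NS(S)$ it can happen that $f_\R(\NS(S')_\R)$ is entirely contained in an actual wall of $v$, in which case $f_\R^{-1}\left(\bigcup\mathcal A\right)=\NS(S')_\R$, your intermediate set is empty, and your inequality reads $\mathsf H_{S'}(v')\leq 0$. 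This genuinely occurs: take $S$ the elliptic K3 surface of Section \ref{sec5}, $v=(2,4e+\sigma,2)$ and $w=(1,3e,1)$; both are spherical, $\rk(w)<\rk(v)$, $\langle w,v\rangle=3-2-2=-1<0$, and $\cW(v,w)=(2e-\sigma)^\perp=\R(4e+\sigma)$ meets $\Amp(S)$, so it is an actual wall of $v$ by Proposition and Definition \ref{propdef:actualwall}(3) (the decomposition $v=w+(v-w)$, with $\Delta(w)=\Delta(v-w)=0$, shows it is a numerical wall). Now let $S'$ be a K3 surface with $\NS(S')=\Z H'$, $(H')^2=6$, and set $f(H')=4e+\sigma$: all hypotheses of the corollary hold, but $f_\R(\NS(S')_\R)=\R(4e+\sigma)\subseteq\cW(v,w)$, so your intermediate set is empty, while $\mathsf H_{S'}((2,H',2))=1$, since Theorem \ref{thm:existsph} produces a $\mu_{H'}$-semistable bundle and no rank-one subsheaf can have slope $3=\mu_{H'}(v')$ (slopes of line bundles on $S'$ are multiples of $6$), so that bundle is stable.

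The repair uses only what you already have, and is in effect what the paper does: push the walls of $v'$ forward rather than pulling all walls of $v$ back. Your wall correspondence shows every actual wall of $v'$ is of the form $f_\R^{-1}(\cW)$ for an actual wall $\cW$ of $v$ whose preimage \emph{is} a proper hyperplane. So choose one point $H'_{C'}$ in each actual chamber $C'$ of $v'$ and send $C'$ to the actual chamber of $v$ containing $f(H'_{C'})+\epsilon A$, with $A$ ample and off the finitely many actual walls of $v$: two distinct chambers $C'_1,C'_2$ are separated by some actual wall of $v'$, hence their images acquire opposite signs against the corresponding wall $\cW$ of $v$, and the assignment is injective; no intermediate component count is needed. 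A secondary, subtler point: in your first assertion, the well-definedness argument across a non-actual numerical wall $\cW$ is incomplete, because the failure of condition (1) supplies a single $H_*\in\cW$ carrying a stable bundle, and openness propagates stability only to the two numerical chambers adjacent \emph{at} $H_*$; since $\cW$ may cross other numerical walls, adjacent pairs along other parts of $\cW$ are not reached. The correct argument is the contrapositive of (1)$\implies$(2): if the bundle of a chamber adjacent along $\cW$ became strictly semistable at a generic point of $\cW$, its Jordan--H\"older factors would be spherical (as in the proof of Proposition and Definition \ref{propdef:actualwall}) and would exhibit a destabilizing factor as in (3), forcing $\cW$ to be actual.
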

\begin{proof}
    The first statement follows by  Proposition and Definition \ref{propdef:actualwall}.(4).   For the second statement, by  Proposition and Definition \ref{propdef:actualwall}.(3), all actual walls of $v$ in $\NS(S)$ are mapped into actual walls of $f(v):=(\rk,f(\ell),s)$ in $\NS(S')$. By Proposition and  Definition \ref{propdef:actualwall}.(4), two different actual chambers of $v$ in $\NS(S)$ are still separated by some actual wall in $\NS(S')$. The conclusion follows. 
\end{proof}

We finish the section with a remark on the difference between Gieseker and slope stabilities.

\begin{Rem}\label{rem:slopevspoly}
    Let $S$ be a smooth projective surface and  $v=(\rk,\ell,s)\in\tilde{\mathrm{H}}(S,\Z)$ with $\rk>0$ and $(\rk,\ell)$ being primitive. For $H\in\Amp_\Q(S)$, we denote by $M^s_{H,\text{slope}}(v)$ and $M^s_{H,\text{Gieseker}}(v)$ the space of all $\mu_H$-slope stable and $p_H$-stable coherent sheaves respectively.
    \begin{enumerate}
        \item  If $H$ is in an open numerical chamber, then we always have $M^s_{H,\text{slope}}(v)=M^s_{H,\text{Gieseker}}(v)$.\\
         If $F$ is $\mu_H$-slope stable, then $F$ is $\mu_{H'}$-slope stable for $H'$ in an open neighbourhood of $H$. This is not the case for $p_H$-stability. See Example \ref{eg:lonelystable} below for a spherical bundle that is $p_H$-stable only for $mH$ but not with respect to any other polarization.
        \item If $H$ and $H'$ are both in $\cap_{i=1}^s\cW_i$ for some numerical chambers $\cW_i$ of $v$ and the line segment $\{(1-t)H+tH'\;|\;t\in[0,1]\}$ does not intersect any other numerical chamber, then $M^s_{H,\text{Gieseker}}(v)=M^s_{H',\text{Gieseker}}(v)$. 
        In particular, if $S$ is a K3 surface with rational polyhedral nef cone and $v$ is a spherical Mukai vector, then $\mathsf H_{\text{Gieseker}}(v)$ is also finite. It is clear that $\mathsf H_{\text{Gieseker}}(v)\geq \mathsf H(v)$. The value $\mathsf H_{\text{Gieseker}}(v)$ will be strictly greater than $\mathsf H(v)$ for some $v$ if isolated $p_H$-stable sheaves as that in Example \ref{eg:lonelystable} appear on the numerical wall. 
    \end{enumerate}
\end{Rem}

\begin{Ex}\label{eg:lonelystable}
    Let $S$ be a generic elliptic K3 surface with a section and adopt the notation as that in Section \ref{sec5}. We may consider the spherical bundle $B:=\mathsf T_{\cO_S(E-F)}(\mathsf T_{\cO_S(F-E)}(\cO_S(2E-2F)))$, which is with Mukai vector $v(B)=(113,82e-82\sigma,-119)$. Then $B$ is  $p_{3e+\sigma}$-stable, but $B$ is not slope stable with respect to any polarization. The spherical bundle $B$ is not Gieseker stable with respect to any polarization other than $3e+\sigma$ up to a scalar.
\end{Ex}
\section{Case study: counting stable spherical bundles on an elliptic K3 surface}\label{sec5}

Let $S$ be a generic elliptic K3 surface  $\pi:S\to \P^1$ with a section $C\subset S$. Denote by  $E=\pi^{-1}(p)$ a generic fiber, $\sigma:=[C]$, and $e:=[E]$. Assume that $\NS(S)=\Z e\oplus \Z\sigma$. The intersection numbers are given as
\begin{align}
    e^2=0 \;\;\; e.\sigma=1 \;\;\; \sigma^2=-2.
\end{align}
The cones are $\Nef(S)=\R_{\geq 0}.e+\R_{\geq 0}.(2e+\sigma)$; $\Peff(S)=\R_{\geq 0}.e+\R_{\geq 0}.\sigma$.

\subsection{Accurate counting}

In this section, we compute the values of $\mathsf H_S(v)$ for such an elliptic K3 surface $S$. We first give a simple parametrization for all spherical Mukai vectors of $S$.
\begin{Lem}\label{lem:vforsphericalvector}
    Let $S$ be an elliptic K3 surface as above, then there is a one-to-one correspondence:
    \begin{align*}
        v:\Q\times \Z & \to \{\text{spherical Mukai vectors in } \Z\oplus \NS(S)\oplus \Z \} \\
        (\tfrac{n}m,k) &\mapsto \left(m,n\sigma + (km+n-n^{\varphi(m)-1}) e,\frac{-n^2+n(km+n-n^{\varphi(m)-1})+1}m\right).
    \end{align*}
\end{Lem}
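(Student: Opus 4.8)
The plan is to establish that the stated map is a well-defined bijection by verifying three things: that every pair $(\tfrac{n}{m},k)$ with $\gcd(m,n)$ as implied actually produces a spherical Mukai vector, that every spherical Mukai vector arises this way, and that distinct inputs yield distinct outputs. First I would recall the classification of spherical Mukai vectors on a K3 surface: a vector $v=(\rk,\ell,c)$ with $\rk>0$ is spherical precisely when $\langle v,v\rangle=\ell^2-2\rk c=-2$. So the strategy reduces to parametrizing integer solutions of $\ell^2-2\rk c=-2$ with $\rk>0$ in the lattice $\NS(S)=\Z e\oplus\Z\sigma$.

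Writing $\rk=m$ and $\ell=n\sigma+pe$ for integers $n,p$, the intersection form gives $\ell^2=-2n^2+2np$ using $e^2=0$, $e.\sigma=1$, $\sigma^2=-2$. The spherical condition $\ell^2-2mc=-2$ then reads $-2n^2+2np-2mc=-2$, i.e. $n^2-np+mc=1$, so $c=\frac{1-n^2+np}{m}$. The key integrality constraint is that $c\in\Z$, which forces $n^2-np\equiv 1\pmod m$, equivalently $n(n-p)\equiv 1\pmod m$. The role of the ratio $\tfrac{n}{m}$ is now visible: for the vector to be well-defined we need $n$ invertible modulo $m$, i.e. $\gcd(m,n)=1$, so that $\tfrac{n}{m}$ is a genuine reduced fraction recording $(\rk,n\bmod m)$ up to the rank. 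I would next unwind the specific coefficient $p=km+n-n^{\varphi(m)-1}$ appearing in the statement. The exponent $\varphi(m)-1$ is there to invert $n$ modulo $m$ via Euler's theorem: since $\gcd(m,n)=1$, one has $n^{\varphi(m)}\equiv 1\pmod m$, hence $n\cdot n^{\varphi(m)-1}\equiv 1\pmod m$, so $n^{\varphi(m)-1}$ is the canonical representative of $n^{-1}\bmod m$. Thus $n-p=n^{\varphi(m)-1}-km$, giving $n(n-p)=n\cdot n^{\varphi(m)-1}-knm\equiv 1\pmod m$, which is exactly the congruence needed for $c$ to be an integer; a short check confirms the displayed formula for $c$ matches $\frac{1-n^2+np}{m}$.

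For surjectivity I would argue in reverse: given any spherical $v=(m,n\sigma+pe,c)$ with $m>0$, the integrality of $c$ forces $n(n-p)\equiv 1\pmod m$, which forces $\gcd(m,n)=1$, so $\tfrac{n}{m}$ is a valid input; the integer $k$ is then uniquely recovered from $p=km+n-n^{\varphi(m)-1}$ by reducing modulo $m$ and dividing, since $n-n^{\varphi(m)-1}\equiv 0\pmod m$ is not automatic but $p-(n-n^{\varphi(m)-1})$ is divisible by $m$ precisely because both $p$ and $n-n^{\varphi(m)-1}$ represent $n-n^{-1}$ type quantities consistently modulo $m$—this is the point requiring care. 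For injectivity, note that from the output one reads off $m=\rk$ directly, then $n$ as the $\sigma$-coefficient, so $(\tfrac{n}{m},k)$ is recovered, with $k$ determined by the $e$-coefficient; distinct pairs give distinct $(m,n)$ or distinct $p$, hence distinct vectors.

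The main obstacle I anticipate is purely bookkeeping around the normalization: making sure the specific representative $n^{\varphi(m)-1}$ lands in the intended range so that $k$ is a well-defined \emph{integer} (rather than merely a residue) and that the correspondence is genuinely one-to-one rather than one-to-one only up to adjusting $k$. In other words, the substantive content is not the spherical condition itself—that is a one-line intersection-number computation—but verifying that the triple given by the formula has an integer last entry for all $(\tfrac{n}{m},k)$, and that every spherical vector's parameters $(m,n,k)$ are uniquely pinned down. I expect the cleanest exposition to fix $n^{\varphi(m)-1}$ as \emph{the} least nonnegative residue of $n^{-1}$ modulo $m$, verify $c\in\Z$ via Euler's theorem, and then observe that $m$, $n$, and $k$ can each be read off from the three components of the output in turn, yielding the bijection.
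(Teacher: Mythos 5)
Your proposal is correct and follows essentially the same route as the paper's own proof: well-definedness by the direct computation $\ell^2-2mc=-2$ together with Euler's theorem giving integrality of the last entry, surjectivity by observing that integrality forces $n(n-p)\equiv 1\pmod m$, hence $\gcd(m,n)=1$ and $p\equiv n-n^{\varphi(m)-1}\pmod m$, which pins down $k\in\Z$, and injectivity from the uniqueness of the irreducible form of $\tfrac{n}{m}$. The normalization worry you raise is in fact a non-issue, since $k$ ranges over all of $\Z$ and therefore absorbs whichever integer representative of $n^{-1}\bmod m$ one uses.
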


Here $\tfrac{n}m$ is in the unique irreducible form in the sense that $m\geq 1$ and $\gcd(n,m)=1$. We write $\varphi(-)$ for the Euler's totient function. In particular,  $n^{\varphi(m)}\equiv 1(\mod m)$.  When $m=1$ and $n=0$, we set the term $n^{\varphi(m)-1}$ to be $0$.
\begin{proof}
   By definition, the rank $m>0$. By a direct computation, $\langle v(\frac{n}m,k),v(\frac{n}m,k)\rangle =-2$. So $v$ is well-defined. 

   As every rational number has a unique irreducible form, the map $v$ is clearly injective.

   For every spherical Mukai vector $v=(m,n\sigma +be,s)$,  by definition the rank $m>0$. The self-intersection is  \begin{align}
       -2=\langle v,v\rangle = -2n^2+2bn-2ms.\label{eq5.2}
   \end{align}
   It follows that $m|-n^2+bn+1$. So $\gcd(m,n)=1$ and $$bn\equiv n^2-1(\mod m)\implies b\equiv n-n^{\varphi(m)-1}(\mod m).$$
    So there exists $k\in \Z$ such that $b=km+n-n^{\varphi(m)-1}$. By \eqref{eq5.2}, the last factor $s=\tfrac{1}{m}(-n^2+bn+1)$. Therefore, the map $v$ is surjective.   
\end{proof}

Note that if a spherical bundle $E$ is $\mu_H$-stable then the dual bundle $E^*$ and $E\otimes\cL$ are $\mu_H$-stable for every line bundle $\cL$.  It follows that $\mathsf H_S(v(a+n,m))=\mathsf H_S(v(a,0))=\mathsf H_S(v(-a,0))$ for all $a\in \Q$, and $n,m\in\Z$. 
\begin{Not}\label{not:H}
    By abuse of notions, we will write the counting function $\mathsf H(a):=\mathsf H_S(v(a,0))$ for every $a\in \Q$. It is clear that $\mathsf H(a+n)=\mathsf H(a)=\mathsf H(-a)$ for every $n\in \Z$. 
\end{Not}

We can describe the destabilizing factor as follows.

\begin{Lem}\label{lem:desfactor}
    A spherical Mukai vector $v(\tfrac{s}{r},k)$ is a destabilizing factor of $v(\tfrac{n}{m},0)$ if and only if $r<m$ and 
    \begin{align}\label{eq513}
        (\frac{n}m-\frac{s}r)^2-\frac{1}{r^2}< (\frac{n}m-\frac{s}r)(\frac{n-n^{\varphi(m)-1}}m-\frac{s-s^{\varphi(r)-1}}r-k)<0
    \end{align}
\end{Lem}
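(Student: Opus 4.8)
The plan is to unwind the definition of a destabilizing factor from Proposition and Definition \ref{propdef:actualwall}.(3) into three concrete requirements on $w:=v(\tfrac{s}{r},k)$ relative to $v:=v(\tfrac{n}{m},0)$: that $w$ is spherical, which is automatic by Lemma \ref{lem:vforsphericalvector}; that $\rk(w)=r<m=\rk(v)$, which is the stated condition $r<m$; that $\langle w,v\rangle<0$; and that $\cW(v,w)$ is a genuine numerical wall, i.e.\ $\cW(v,w)\cap\Amp(S)\neq\emptyset$. The last two conditions will produce the left and right halves of \eqref{eq513} respectively. To fix notation I would abbreviate the two $\NS$-coefficients as $b:=n-n^{\varphi(m)-1}$ and $b':=kr+s-s^{\varphi(r)-1}$, so that the relevant classes are $\ell_v=n\sigma+be$ and $\ell_w=s\sigma+b'e$; one checks directly that the second factor of the product in \eqref{eq513} equals $\tfrac{b}{m}-\tfrac{b'}{r}$, so I would set $\delta_\sigma:=\tfrac{n}{m}-\tfrac{s}{r}$ and $\delta_e:=\tfrac{b}{m}-\tfrac{b'}{r}$ and aim to prove the pair of inequalities $\delta_\sigma^2-\delta_\sigma\delta_e<\tfrac{1}{r^2}$ and $\delta_\sigma\delta_e<0$.

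\emph{The wall condition.} Using $\cW(v,w)=(m\ell_w-r\ell_v)^\perp$ together with $e^2=0$, $e.\sigma=1$, $\sigma^2=-2$, I would write $m\ell_w-r\ell_v=(ms-rn)\sigma+(mb'-rb)e$ and pair it against a general ample class, which on this surface is of the form $Pe+Q\sigma$ with $Q>0$ and $P>2Q$ because $\Nef(S)=\R_{\geq0}.e+\R_{\geq0}.(2e+\sigma)$. The pairing equals $(ms-rn)(P-2Q)+(mb'-rb)Q$, a strictly positive combination of $ms-rn$ and $mb'-rb$, so it vanishes for some ample $H$ exactly when these two integers have strictly opposite signs (which in particular forces $m\ell_w-r\ell_v\neq0$, so the wall is genuinely codimension one). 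Since $ms-rn=-mr\delta_\sigma$ and $mb'-rb=-mr\delta_e$, this is precisely $\delta_\sigma\delta_e<0$, the right half of \eqref{eq513}.

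\emph{The Mukai pairing.} The computational core is the identity $mr\langle w,v\rangle=(ms-rn)^2-(ms-rn)(mb'-rb)-r^2-m^2$, which I would obtain by expanding $\langle w,v\rangle=\ell_w.\ell_v-r c_v-m c_w$ and substituting the sphericity relations $m c_v=-n^2+nb+1$ and $r c_w=-s^2+sb'+1$ that come from $\langle v,v\rangle=\langle w,w\rangle=-2$; the degree-two terms collect into $(ms-rn)^2$ and the remaining terms into $-(ms-rn)(mb'-rb)$. Rewriting via $\delta_\sigma,\delta_e$ yields $\langle w,v\rangle=mr(\delta_\sigma^2-\delta_\sigma\delta_e)-\tfrac{r}{m}-\tfrac{m}{r}$. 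From this, $\langle w,v\rangle<0$ is equivalent to the left half of \eqref{eq513}: the implication ``$\Leftarrow$'' is immediate, since $\delta_\sigma^2-\delta_\sigma\delta_e<\tfrac{1}{r^2}$ forces $mr(\delta_\sigma^2-\delta_\sigma\delta_e)<\tfrac{m}{r}$ and hence $\langle w,v\rangle<-\tfrac{r}{m}<0$; for ``$\Rightarrow$'' I would use that $\langle w,v\rangle$ is an integer, so $\langle w,v\rangle<0$ gives $\langle w,v\rangle\leq-1$, which rearranges to $\delta_\sigma^2-\delta_\sigma\delta_e\leq\tfrac{1}{m^2}+\tfrac{1}{r^2}-\tfrac{1}{mr}$, and this bound is strictly below $\tfrac{1}{r^2}$ exactly because $r<m$.

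Finally I would dispatch the leftover numerical-wall bookkeeping to confirm $w$ really witnesses a wall in the sense of Definition \ref{def:numwall}: writing $v=w+(v-w)$, both ranks are positive since $0<r<m$, while the discriminant conditions are automatic, as $\Delta(w)=\langle w,w\rangle+2r^2=2(r^2-1)\geq0$ and, using $\langle v-w,v-w\rangle=-4-2\langle w,v\rangle\geq-2$ together with $\rk(v-w)\geq1$, also $\Delta(v-w)\geq0$. I expect the only real obstacle to be computational: deriving the identity for $mr\langle w,v\rangle$ cleanly and then running the short integrality-plus-$r<m$ argument that sharpens the naive threshold $\tfrac{1}{m^2}+\tfrac{1}{r^2}$ coming from $\langle w,v\rangle<0$ down to the asserted $\tfrac{1}{r^2}$.
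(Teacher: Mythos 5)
Your proof is correct and follows essentially the same route as the paper: both unwind Proposition and Definition \ref{propdef:actualwall}.(3), translate the wall condition $\cW(v,w)\cap\Amp(S)\neq\emptyset$ into the sign inequality (the right half of \eqref{eq513}), and use integrality of the Mukai pairing together with $r<m$ to sharpen $\langle w,v\rangle<0$ into the left half. Your closing verification that $v=w+(v-w)$ satisfies the discriminant conditions of Definition \ref{def:numwall} is a piece of bookkeeping the paper leaves implicit, but it does not alter the argument.
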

\begin{proof}
    By Proposition and Definition \ref{propdef:actualwall}.(3), $v(\tfrac{s}{r},k)$ is a destabilizing factor of $v(\tfrac{n}{m},0)$ if and only if $r<m$ and 
    \begin{enumerate}
        \item  $\cW(v(s/r,k),v(n/m,0))\cap \Amp(S)\neq \emptyset$;
        \item $\langle v(s/r,k),v(n/m,0)\rangle\leq -1$ or equivalently $<0$.
    \end{enumerate}
    Denote by divisor $D_1=n\sigma +(n-n^{\varphi(m)-1})e$ and $D_2=s\sigma+(kr+s-s^{\varphi(r)-1})e$. Then condition (a) $\iff$ the divisor of  $rD_1-mD_2$    is not in $\Peff(S)$ or $-\Peff(S)$. 
    \begin{align*}
        \iff (rn-ms)(r(n-n^{\varphi(m)-1})-m(kr+s-s^{\varphi(r)-1}))<0.
    \end{align*}
    Condition (b) 
    \begin{align*}
        \iff & D_1.D_2-r\frac{D_1^2+2}{2m}-m\frac{D_2^2+2}{2r}<-\frac{r}m \\
        \iff & 2rmD_1.D_2-r^2D^2_1-2r^2-m^2D_2^2-2m^2< -2r^2\\
        \iff & -(rD_1-mD_2)^2<2m^2.\\
        \iff & 2(rn-ms)^2 - 2(rn-ms)\left(r(n-n^{\varphi(m)-1})-m(kr+s-s^{\varphi(r)-1})\right)<2m^2.
   \end{align*}
   Combine this together with condition (a), dividing all terms by $r^2m^2$, this is equivalent to the inequality \eqref{eq513}.
\end{proof}

\begin{Ex}\label{eg:computeH}
Note that \eqref{eq513} in particular implies $|\frac{n}m-\frac{s}r|<\frac{1}r$ and $$\left|\frac{n-n^{\varphi(m)-1}}m-\frac{s-s^{\varphi(r)-1}}r-k\right|<\frac{m}r.$$
For each given $\frac{n}m$, these two conditions reduce the number of possible destabilizing factors to around $2m\ln{m}$. With the help of the computer, we may list all destabilizing factors $v(\frac{s}r,k)$ for $v(\frac{n}m,0)$ when $m$ is small. Note that the wall $\cW(v(\frac{s}r,k),v(\frac{n}m,0))$ is given by
\begin{align}\label{eq:wallelliptic}
    ((rn-ms).\sigma+(r(n-n^{\varphi(m)-1})-m(kr+s-s^{\varphi(r)-1})).e)^\perp\cap \Amp(S)
\end{align}
which is determined by the ratio of two coefficients. We may then compute the number of all actual walls. 
We list a few explicit computations here.\\
 
$\mathsf H(1)=1$; $\mathsf H(\frac{1}{2})=2$;  $\mathsf H(\frac{1}{3})=3$;  $\mathsf H(\frac{1}{4})=4$;  $\mathsf H(\frac{1}{5})=5$;  $\mathsf H(\frac{2}{5})=6$;  $\mathsf H(\frac{1}{6})=6$;  $\mathsf H(\frac{1}{7})=7$;  $\mathsf H(\frac{2}{7})=7$;  $\mathsf H(\frac{3}{7})=7$;  $\mathsf H(\frac{1}{8})=8$;  $\mathsf H(\frac{3}{8})=6$;  $\mathsf H(\frac{1}{9})=9$;  $\mathsf H(\frac{2}{9})=7$;  $\mathsf H(\frac{4}{9})=8$;  $\mathsf H(\frac{1}{10})=10$;  $\mathsf H(\frac{3}{10})=9$; 

$\dots\dots$

$\mathsf H(\frac{1}{21})=21$;  $\mathsf H(\frac{2}{21})=13$;  $\mathsf H(\frac{4}{21})=12$;  $\mathsf H(\frac{5}{21})=13$;  $\mathsf H(\frac{8}{21})=11$;  $\mathsf H(\frac{10}{21})=14$;  

$\dots\dots$

$\mathsf H(\frac{1}{32})=32$;  $\mathsf H(\frac{3}{32})=16$;  $\mathsf H(\frac{5}{32})=16$;  $\mathsf H(\frac{7}{32})=14$;  $\mathsf H(\frac{9}{32})=15$;  $\mathsf H(\frac{11}{32})=16$;  $\mathsf H(\frac{13}{32})=14$;  $\mathsf H(\frac{15}{32})=13$;  

$\dots\dots$

$\mathsf H(\frac{1}{93})=93$;  $\mathsf H(\frac{2}{93})=49$;  $\mathsf H(\frac{4}{93})=30$;  $\mathsf H(\frac{5}{93})=28$;  $\mathsf H(\frac{7}{93})=25$;  $\mathsf H(\frac{8}{93})=25$;  $\mathsf H(\frac{10}{93})=26$;  $\mathsf H(\frac{11}{93})=24$;  $\mathsf H(\frac{13}{93})=23$;  $\mathsf H(\frac{14}{93})=24$;  $\mathsf H(\frac{16}{93})=21$;  $\mathsf H(\frac{17}{93})=23$;  $\mathsf H(\frac{19}{93})=22$;  $\mathsf H(\frac{20}{93})=22$;  $\mathsf H(\frac{22}{93})=22$;  $\mathsf H(\frac{23}{93})=31$;  $\mathsf H(\frac{25}{93})=23$;  $\mathsf H(\frac{26}{93})=24$;  $\mathsf H(\frac{28}{93})=20$;  $\mathsf H(\frac{29}{93})=25$;  $\mathsf H(\frac{32}{93})=20$;  $\mathsf H(\frac{34}{93})=24$;  $\mathsf H(\frac{35}{93})=21$;  $\mathsf H(\frac{37}{93})=26$;  $\mathsf H(\frac{38}{93})=22$;  $\mathsf H(\frac{40}{93})=23$;  $\mathsf H(\frac{41}{93})=23$;  $\mathsf H(\frac{43}{93})=26$;  $\mathsf H(\frac{44}{93})=25$;  $\mathsf H(\frac{46}{93})=50$;  

$\dots\dots$

$\mathsf H(\frac{92}{811})=48$;  $\mathsf H(\frac{93}{811})=47$;  $\mathsf H(\frac{94}{811})=46$;  $\mathsf H(\frac{95}{811})=43$;  $\mathsf H(\frac{96}{811})=45$;  $\mathsf H(\frac{97}{811})=50$;  $\mathsf H(\frac{98}{811})=48$;  $\mathsf H(\frac{99}{811})=46$;  $\mathsf H(\frac{100}{811})=53$; $\dots$; $\mathsf H(\frac{266}{811})=50$;  $\mathsf H(\frac{267}{811})=51$;  $\mathsf H(\frac{268}{811})=58$;  $\mathsf H(\frac{269}{811})=83$;  $\mathsf H(\frac{270}{811})=276$;  $\mathsf H(\frac{271}{811})=146$;  $\mathsf H(\frac{272}{811})=72$;  $\mathsf H(\frac{273}{811})=60$;  $\mathsf H(\frac{274}{811})=60$;  $\mathsf H(\frac{275}{811})=47$;  $\mathsf H(\frac{276}{811})=46$;  $\mathsf H(\frac{277}{811})=45$. 
   
\end{Ex}

We may prove the computation of $\mathsf H(\tfrac{n}{m})$ for some explicit value of $\frac{n}{m}$ as follows.
\begin{Ex}\label{eg:basiccomputation}
Let $S$ be a generic elliptic K3 surface admitting a section and adopt Notation \ref{not:H}, then
    \begin{enumerate}
        \item $\mathsf H(\frac{1}m)=m$.
        \item $\mathsf H(\frac{n}{2n+1})=n+4$ when $n\geq 2$.
    \end{enumerate}
\end{Ex}
\begin{proof}
    (a) By Proposition and Definition \ref{propdef:actualwall}.(3)$''$, we only need to consider all $v(\frac{s}r,k)$ with $r\leq \frac{m}2$ satisfying \eqref{eq513}. When $r\geq 2$, as $|\frac{1}m-\frac{1}r|<\frac{1}{r^2}$ and $\gcd(s,r)=1$, the destabilizing factor is in the form of $v(\tfrac{1}{r},k)$.

    Note that both $n=s=1$ in \eqref{eq513}, so by the second inequality, $k\leq -1$. By the first inequality, $(\tfrac{1}m-\tfrac{1}r)^2-\tfrac{1}{r^2}<(\tfrac{1}m-\tfrac{1}r)(-k)\leq -\tfrac{1}{2r}\leq -\tfrac{1}{r^2}$. So there is no solution to \eqref{eq513} when $r\geq 2$.\\

    We only need to consider all destabilizing factors in the form of $v(s,k)$. It is clear that $s=0$ or $1$. Inequalities in \eqref{eq513} become $(\tfrac{1}{m}-s)^2-1< (\tfrac{1}m-s)(-k)<0$,
    
    When $s=1$, there is no solution of $k$ to \eqref{eq513}.

    When $s=0$, solutions to \eqref{eq513} are $k=1,\dots, m-1$.

    Note that the slopes of the lines through $(\frac{1}m,0)$ and each $(0,k)$ are all different. So the $m-1$ destabilizing factors of $v(\frac{1}m,0)$ correspond to $m-1$ different actual wall. So $\mathsf H(\frac{1}m)=m$.

    (b) Note that $n^{\varphi(2n+1)-1}\equiv -2(\mod 2n+1)$. Replacing $v(\tfrac{n}{2n+1},0)$ by $v(\frac{n}{2n+1},k_n')$, we may assume $v(\tfrac{n}{2n+1},k_n')=(2n+1,n.\delta+(n+2).e,*)$. In \eqref{eq513}, the term $\frac{n-n^{\varphi(2n+1)-1}}{2n+1}$ is accordingly replaced as $\frac{n+2}{2n+1}$.
    
    When $r=1$, then $s=0$ or $1$. Solutions  to \eqref{eq513} are $(s,k)=$ $(0,1)$, $(0,2)$, or $(1,0)$.

    When $r=2$, then $s=1$. Solutions to \eqref{eq513} are $k=0,-1,\dots,-n+1$.

    When $r=2d$ with $2\leq d\leq n/2$, then as $|\frac{n}{2n+1}-\frac{s}{2d}|<\frac{1}{2d}$, the numerator $s$ can only be $d-1$. It follows that $2|d$, so $(d-1)^{\varphi(2d)-1}\equiv -d-1 (\mod 2d)$. It is clear that \eqref{eq513} has no solution for $k\in\Z$.

    When $r=2d+1$ with $1\leq d\leq n/2$, then $s$ can only be $d$. There is a unique $k_d\in\Z$ satisfying $\eqref{eq513}$. More precisely,  $v(\tfrac{d}{2d+1},k_d)=(2d+1,d.\delta+(d+2).e,*)$. Note that the wall $\cW(v(\tfrac{n}{2n+1},k_n'),v(\tfrac{d}{2d+1},k_d))$ is always spanned by $\delta+5e$ which is the same as that given by $\cW(v(\tfrac{n}{2n+1},k_n'),v(0,2))$. 

    To sum up,  actual walls correspond to the destabilizing factors $v(0,1)$, $v(0,2)$, $v(1,0)$, $v(\tfrac{1}{2},k)$ where $k=0,-1,\dots,-n+1$. It is clear that these walls are all different from each other when $n\geq 2$. So there are $n+4$ chambers in total for $v(\tfrac{n}{2n+1},k_n')$.
\end{proof}
\begin{Rem}
    There are some other patterns of $\mathsf H(\tfrac{n}m)$. We write down some typical ones in the remark. Here we adopt the notion of continuous fraction  $$[a_1,a_2,\dots,a_s]:=\tfrac{1}{a_1+\frac{1}{a_2+\frac{1}{\dots+\frac{1}{a_s}}}}.$$
    \begin{enumerate}
    \item $\mathsf H(\tfrac{n}m)\leq m$ when $m\geq 6$. 
    \item $\mathsf H(\tfrac{2}{2n+1})=\begin{cases}
        n+4 & \text{ when $n\equiv 0,1 (\mod 3)$;}\\
        n+3 & \text{ when $n\equiv 2 (\mod 3)$.}
    \end{cases}$  
    \item $\mathsf H([a,b])=\begin{cases}
        b+2a & \text{when $b\geq a \geq 2$;}\\
        a+2b \text{ or } a+2b-1& \text{when $a\geq b\geq 2$.}
    \end{cases}$
    \item$\mathsf H([a_1,\dots,a_s,N])=N+2\sum a_i$ when $a_1\geq 2$ and $N$ is sufficiently large with respect to all $a_i$'s.
    \item Let $b_1=b_2=1$ and $b_{n+1}:=b_n+b_{n-1}$, then $\mathsf H(\frac{b_n}{b_{n+1}})=\mathsf H([\underbrace{1,1,\dots,1}_{n^{\text{th}}\text{ 1 in total}}])=\lfloor\frac{n}2\rfloor^2+2$.
    \end{enumerate}

\subsection{Asymptotic behaviors}
    We also notice the following general behavior of $\mathsf H(\frac{n}m)$ when $m$ tends to infinity:
    \begin{align}\label{eq511}
       \mathsf H_{\min}(m):= \min_{1\leq n\leq m,\;\gcd(n,m)=1}\{\mathsf H(\tfrac{n}m)\}= \Theta((\ln m)^2).\\
       \mathsf H_{\mathrm{ave}}(m):= \mathrm{Average}_{1\leq n\leq m,\;\gcd(n,m)=1}\{\mathsf H(\tfrac{n}m)\}= \Theta((\ln m)^2).\label{eq5116}
    \end{align}
\end{Rem}
 In the rest of the note, we prove the following weaker version of the average estimation \eqref{eq5116} on the counting $\mathsf H(v)$.
\begin{Prop}\label{prop:estimationofH}
    Let $S$ be a generic elliptic K3 surface admitting a section.  Adopt Notation \ref{not:H}, then for any $\alpha>0$, there exists constant $C_1$ and $C_2$ such that for $m\gg0$,
    \begin{align}\label{eq512}
      \tfrac{C_1}m(\varphi(m)\ln m)^2\leq  \sum_{\substack{1\leq n< m,\; \gcd(n,m)=1}}\mathsf H\left(\tfrac{n}m\right)\leq C_2 m^{1+\alpha}.
    \end{align}
\end{Prop}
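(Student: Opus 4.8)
The plan is to translate the counting function $\mathsf H(n/m)$ into a lattice/divisor count and then estimate the two sides of \eqref{eq512} separately. First I would reduce to counting walls: since $\Nef(S)$ is a two-dimensional rational polyhedral cone, its actual walls are rays, and a cone cut by $W$ interior rays has exactly $W+1$ chambers; hence by Corollary \ref{cor:HvdeterminedbyAmp}, $\mathsf H(n/m) = 1 + \#\{\text{actual walls of } v(n/m,0)\}$, so $\sum_{n}\mathsf H(n/m) = \varphi(m) + \sum_n \#\{\text{actual walls}\}$ and the additive $\varphi(m)$ is harmless for both inequalities. By Proposition and Definition \ref{propdef:actualwall}(3) together with Lemma \ref{lem:desfactor}, every actual wall is $\cW(v(n/m,0), v(s/r,k))$ for a destabilizing factor $v(s/r,k)$ with $r<m$ obeying \eqref{eq513}, and its ray in $\Nef(S)$ is the primitive direction of the vector in \eqref{eq:wallelliptic}. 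In particular $\#\{\text{actual walls}\} \le \#\{\text{destabilizing factors}\} = \sum_{r=1}^{m-1}\#\{(s,k) : \eqref{eq513}\}$, which is the route to the upper bound, while a careful selection of destabilizing factors with pairwise distinct rays is the route to the lower bound.

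Next I would make the per-$n$ count explicit. Fix $r$ and set $j := nr - ms$. The first inequality of \eqref{eq513} is equivalent to $|n/m - s/r| < 1/r$, i.e. $|j| < m$, which pins down $s$ uniquely once the sign of $j$ is chosen. For fixed $(r,s)$, the admissible $k$ are the integer points of an interval of length $L(r,j) = \frac{m^2-j^2}{mr|j|}$; thus $\#\{k\}$ is $0$ when this interval misses $\Z$ and otherwise lies between $L(r,j)$ and $L(r,j)+1$. Summing $\#\{k\} \le L(r,j)+1$ over the (at most two) choices of $s$, then over $r$, and then over $n$ coprime to $m$, the contribution of the $\sum L$-terms is a harmonic double sum: for $\gcd(r,m)=1$ the residues $j \equiv nr \pmod m$ run over $(\Z/m)^\times$ as $n$ does, so $\sum_{n} L(r,\cdot) = \frac1r\sum_{0<j<m,\ \gcd(j,m)=1}(\frac mj - \frac jm) = O\big(\tfrac{\varphi(m)\ln m}{r}\big)$, and hence $\sum_n \sum_r L = O(\varphi(m)(\ln m)^2) = O(m(\ln m)^2)$.

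For the upper bound the remaining quantity is the number of nonempty intervals, $\sum_n \#\{(r,s) : \text{interval} \ne \emptyset\}$. Here I would use the change of variables of Lemma \ref{lem:desfactor}: writing the wall invariant as the integer $a := m^2 + jQ - j^2$, where $Q$ is the $e$-coefficient in \eqref{eq:wallelliptic}, the two conditions in \eqref{eq513} become $a \in [1,m^2]$, and after normalizing $(r,k)\mapsto(r,t)$ one recognizes $a \in A_m$ and the total count as $\mathsf G'(m)=\sum_{a\in A_m}\tau(a)$ of Question \ref{ques:averofdivfunction} up to bounded factors, each $a$ being hit with multiplicity controlled by its divisors. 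The elementary divisor bound $\tau(a) = O_\alpha(a^\alpha)$ with $a\le m^2$ then gives $\mathsf G'(m) \le |A_m|\cdot O_\alpha(m^{2\alpha}) = O_\alpha(\varphi(m)\ln m\cdot m^{2\alpha}) = O_\alpha(m^{1+3\alpha})$; rescaling $\alpha$ yields $\sum_n \mathsf H(n/m) \le C_2 m^{1+\alpha}$.

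Finally, the lower bound is where I expect the real difficulty. One cannot simply bound from above, and the $\sum L$ computation, while of the right size $\varphi(m)(\ln m)^2$, counts destabilizing factors with multiplicity and does not by itself produce distinct actual walls. The plan is to fix, for each $r$ coprime to $m$, the value of $s$ realizing the residue $j_r$ of $nr$ nearest $0$ modulo $m$, so that $L(r,j_r)\approx \frac{1}{r\,\|nr/m\|}$, and then to show that the resulting walls are pairwise distinct (their primitive directions $P:Q$ from \eqref{eq:wallelliptic} genuinely differ) and actual. Summing $L(r,j_r)$ over $r$ and $n$ reduces the lower bound to estimating the Ostrowski-type sum $\sum_{n}\sum_{r<m}\frac{1}{r\,\|nr/m\|}$ from below. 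The obstacle is that $\|nr/m\|$ is governed by the continued-fraction expansion of $n/m$ and is highly irregular; to get a clean bound I would decompose dyadically in $|j|$ and in $r$, count the pairs $(n,r)$ with $\gcd(n,m)=\gcd(r,m)=1$ and $|j|$ in each dyadic range — whose density carries the factor $\varphi(m)/m$ — and sum. This dyadic count, together with the coprimality densities, is exactly what produces the slightly lossy but robust shape $\frac{C_1}{m}(\varphi(m)\ln m)^2$ rather than the conjectural $\varphi(m)(\ln m)^2$; controlling the error in the equidistribution of $\{nr/m\}$, equivalently the precise average of $\tau$ over the thin set $A_m$ in Question \ref{ques:averofdivfunction}, is precisely the gap between this proposition and the sharp asymptotic \eqref{eq5116}.
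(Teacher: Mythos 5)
Your upper bound is essentially the paper's argument in a slightly rearranged form: both routes convert the count of destabilizing factors into a divisor sum over the set $A_m$ and then invoke $\tau(a)=O_\alpha(a^\alpha)$ with $a\le m^2$. (The paper makes the identification exact, $\sum_{(n,s)\in S_{m,r}}\mathsf F\left(\tfrac nm,\tfrac sr\right)=\mathsf G(m,r)$ in Lemma \ref{lem:boundonHandF}, rather than splitting into interval lengths plus a count of nonempty intervals.) One caveat: the case $\gcd(r,m)=a>1$, which you dismiss as ``up to bounded factors'', is handled in the paper via Lemma \ref{lem:boundonF}\eqref{lem5q5} and the multiplicity $\varphi(m)/\varphi(m/a)$ in Lemma \ref{lem:boundonHandF}; these multiplicities are not bounded, only polynomially controlled, but they do not hurt the $m^{1+\alpha}$ bound, so this is repairable and not a real gap.

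The lower bound is where your proposal genuinely breaks. Your plan sums, for each fixed $n$, over \emph{all} $r<m$ coprime to $m$, and so it needs the walls produced by destabilizing factors with different denominators $r$ to be pairwise distinct. This is false: in Example \ref{eg:basiccomputation}(b) the destabilizing factors $v\left(\tfrac{d}{2d+1},k_d\right)$ for every $1\le d\le n/2$ cut out one and the same actual wall, so the Ostrowski-type sum $\sum_n\sum_{r}\tfrac{1}{r\lVert nr/m\rVert}$ overcounts walls by an unbounded factor and cannot bound $\sum_n\mathsf H\left(\tfrac nm\right)$ from below without a distinctness argument that you do not have (and that cannot hold in general). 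The paper sidesteps cross-$r$ distinctness entirely: it restricts to $\tfrac nm$ in the union $B_m$ of \emph{disjoint} intervals $\left(\tfrac sr,\tfrac sr+\tfrac{1}{\sqrt m}\right)$ with $r<m^{1/4}$, assigns to each such $n$ the \emph{single} ratio $q\left(\tfrac nm\right)=\tfrac sr$, and uses only Lemma \ref{lem:boundonF}\eqref{lem5q3}, $\mathsf H\left(\tfrac nm\right)\ge \mathsf F\left(\tfrac nm,\tfrac sr\right)+1$; this is legitimate because for a fixed ratio $\tfrac sr$ the walls \eqref{eq:wallelliptic} for different $k$ are automatically distinct (the $\sigma$-coefficient is fixed while the $e$-coefficient moves with $k$). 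The sum over $r$ then arises from partitioning the $n$'s according to which interval of $B_m$ they lie in, not from summing over $r$ for each fixed $n$, and the resulting double sum $\sum_{r<m^{1/4},\,\gcd(r,m)=1}\;\sum_{1\le d<r\sqrt m,\,\gcd(d,mr)=1}\tfrac{m^2-d^2}{mrd}$ is evaluated by elementary harmonic estimates, giving $\Theta\left(\tfrac{\varphi(m)^2}{m}(\ln m)^2\right)$ unconditionally. By contrast, your version ends in a reduction to an unproven dyadic/equidistribution estimate, so even granting distinctness it is not a complete proof. To salvage your approach, replace ``sum over all $r$ for each $n$'' by ``choose one $r$ per $n$'' --- which is exactly the role of $B_m$ in the paper.
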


We make some notions and lemmas on elementary number theory before the proof. For a pair of rational numbers $\frac{n}m$ and $\frac{s}r$ with  $1\leq r<m$, we will write:
\begin{align}
    \mathsf F\left(\tfrac{n}{m},\tfrac{s}{r}\right):=\#\{k\in \Z\;|\; k \text{ satisfies inequalities in \eqref{eq513}}\}.
\end{align}

\begin{Lem}\label{lem:boundonF}
The function $\mathsf F$ and $\mathsf H$ satisfy the following properties.

\begin{enumerate}[(1)]
    \item $\mathsf F\left(\tfrac{n}{m},\frac{s}{r}\right)=\mathsf F\left(1-\frac{n}{m},1-\frac{s}{r}\right)$.\label{lem5q1}
    \item When $|rn-ms|\geq m$, $\mathsf F\left(\frac{n}{m},\frac{s}{r}\right)=0$.\\
When $|rn-ms|< m$, $\mathsf F\left(\frac{n}{m},\frac{s}r\right)\geq \left\lfloor\frac{m^2-(rn-ms)^2}{mr|rn-ms|}\right\rfloor$.
    \label{lem5q2}
    \item For every $r<m$ and $\frac{s}r\in \Q$, $\mathsf H\left(\frac{n}{m}\right)\geq \mathsf F\left(\frac{n}{m},\frac{s}{r}\right)+1$.\label{lem5q3}
    \item \label{lem5q4}Assume that $0<n<m$, then 
    \begin{align}\label{eq56}
        \mathsf H\left(\tfrac{n}{m}\right)\leq 1+ \sum_{1\leq r\leq \left\lfloor \tfrac{m}{2}\right\rfloor}\sum_{\substack{0\leq s\leq r \\ \gcd(s,r)=1}}\mathsf F\left(\tfrac{n}{m},\tfrac{s}{r}\right).
    \end{align}
    \item \label{lem5q5} Let $a=\gcd(m,r)$, then $\mathsf F\left(\frac{n}{m},\frac{s}{r}\right)\leq \mathsf F\left(\frac{an}{m},\frac{as}{r}\right)$.
    \item \label{lem5q6}When $\gcd(m,r)=1$, 
    \begin{align}\label{eg:F}
        \mathsf F\left(\tfrac{n}{m},\tfrac{s}{r}\right)=\#\left\{t\in\Z_{\geq 1}\mid (rn-ms)\vert m^2+r^2-tmr >(rn-ms)^2\right\}.
    \end{align}
\end{enumerate}
\end{Lem}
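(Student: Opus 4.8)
The plan is to run all six parts off a single integer reformulation of \eqref{eq513}. Set $P:=rn-ms$ and $Q:=r(n-n^{\varphi(m)-1})-m(s-s^{\varphi(r)-1})-mrk$; a short manipulation of \eqref{eq513}, identical to the one in the proof of Lemma \ref{lem:desfactor}, shows that \eqref{eq513} is equivalent to $P^2-m^2<PQ<0$, and that as $k$ runs over $\Z$ the integer $Q$ runs over the residue class of $Q_0:=r(n-n^{\varphi(m)-1})-m(s-s^{\varphi(r)-1})$ modulo $mr$. Solving for $k$, the quantity $\mathsf F(\tfrac nm,\tfrac sr)$ is exactly the number of integers in an open interval of length $L:=\tfrac{m^2-P^2}{mr|P|}$. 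Part (2) is then immediate: the interval is empty precisely when $|P|\ge m$, and otherwise it contains at least $\lfloor L\rfloor$ integers (the degenerate case of one fewer would force both $L$ and the endpoint $\tfrac{Q_0}{mr}$ to be integral, which a short divisibility check excludes). For part (1), the substitution $n\mapsto m-n$, $s\mapsto r-s$ sends $P\mapsto-P$ and, since the inverse of $-n$ modulo $m$ is $-n^{\varphi(m)-1}$ and similarly for $s$, sends $Q_0\mapsto-Q_0\pmod{mr}$; as $P^2-m^2<PQ<0$ is invariant under $(P,Q)\mapsto(-P,-Q)$, the assignment $Q\mapsto-Q$ is a bijection between the two solution sets.

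Parts (3) and (4) are geometric, built on the wall formula \eqref{eq:wallelliptic} and Corollary \ref{cor:HvdeterminedbyAmp}. Fix $\tfrac sr$ with $r<m$. By Lemma \ref{lem:desfactor} the $\mathsf F(\tfrac nm,\tfrac sr)$ admissible values of $k$ correspond to the destabilizing factors $v(\tfrac sr,k)$, whose walls, by \eqref{eq:wallelliptic}, have fixed $\sigma$-coefficient $P$ and $e$-coefficient changing by $-mr\neq0$ as $k$ increments; hence they are pairwise distinct rays in the two-dimensional cone $\Amp(S)$. Distinct rays cut a planar convex cone into one more region than their number, so Corollary \ref{cor:HvdeterminedbyAmp} gives $\mathsf H(\tfrac nm)\ge\mathsf F(\tfrac nm,\tfrac sr)+1$, proving (3). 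For the upper bound (4), Proposition and Definition \ref{propdef:actualwall}.(3)$''$ guarantees that every actual wall is realized by a destabilizing factor with $r\le\tfrac m2$, while the consequence $|\tfrac nm-\tfrac sr|<\tfrac1r$ of \eqref{eq513} recorded in Example \ref{eg:computeH} forces $0\le s\le r$ for such a factor when $0<n<m$. Therefore the number of actual walls is at most $\sum_{r\le\lfloor m/2\rfloor}\sum_{0\le s\le r,\,\gcd(s,r)=1}\mathsf F(\tfrac nm,\tfrac sr)$, and the chambers outnumber the walls by one.

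For part (5), write $a=\gcd(m,r)$, $m=am'$, $r=ar'$ and $P=aP'$ with $P'=r'n-m's$. Because $a\mid Q_0$, every $Q$ in the class $Q_0\pmod{mr}$ is divisible by $a$, and putting $Q=a\tilde Q$ turns $P^2-m^2<PQ<0$ into $P'^2-m'^2<P'\tilde Q<0$. The congruence $n^{\varphi(m)-1}\equiv n^{\varphi(m')-1}\pmod{m'}$—both sides equal $n^{-1}$ by Euler's theorem—together with its analogue for $s$ modulo $r'$, shows that $\tilde Q$ lands in the residue class modulo $m'r'$ associated with $(\tfrac n{m'},\tfrac s{r'})$. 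Thus $Q\mapsto\tilde Q$ injects the solutions counted by $\mathsf F(\tfrac nm,\tfrac sr)$ into those counted by $\mathsf F(\tfrac{an}m,\tfrac{as}r)=\mathsf F(\tfrac n{m'},\tfrac s{r'})$, which is the claimed inequality.

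Part (6) is the crux and, I expect, the main obstacle. Assuming $\gcd(m,r)=1$ and writing $n\,n^{\varphi(m)-1}=1+m\alpha$, $s\,s^{\varphi(r)-1}=1+r\beta$, a direct expansion of $PQ$ collapses to the identity $PQ=P^2-\bigl(m^2+r^2-t\,mr\bigr)$ with $t:=n\,s^{\varphi(r)-1}+s\,n^{\varphi(m)-1}-r\alpha-m\beta-Pk\in\Z$. Writing $N_t:=m^2+r^2-tmr$, this says $N_t=P(P-Q)$, so $P\mid N_t$ automatically; moreover $PQ<0\iff N_t>P^2$ and $P^2-m^2<PQ\iff N_t<m^2$, and since $0<r<m$ the last inequality is equivalent to $t\ge1$. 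As $\gcd(mr,P)=1$, the affine map $k\mapsto t$ is a bijection onto the residue class $\{t:P\mid N_t\}$, so the $k$-count equals $\#\{t\ge1:P\mid N_t>P^2\}$, which is \eqref{eg:F}. The delicate point is this identity: it is where the exponents $n^{\varphi(m)-1}$, $s^{\varphi(r)-1}$ conspire through Euler's theorem to convert the stability inequality into a divisibility condition on $m^2+r^2-tmr$, precisely the quantity underlying Question \ref{ques:averofdivfunction}.
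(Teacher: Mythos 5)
Your proposal is correct and follows essentially the same route as the paper: the same integer reformulation $P^2-m^2<PQ<0$ of \eqref{eq513} for parts (1), (2), (5), (6), the same wall-counting geometry via Lemma \ref{lem:desfactor}, \eqref{eq:wallelliptic} and Proposition and Definition \ref{propdef:actualwall}.(3)$''$ for parts (3) and (4), and in (6) your explicit identity $PQ=P^2-(m^2+r^2-tmr)$ with the bijection $k\mapsto t$ is just the paper's SunZi-theorem computation of $(rn-ms)(rn^{\varphi(m)-1}-ms^{\varphi(r)-1})\equiv m^2+r^2\pmod{mr}$ organized differently. If anything, your treatment of (2) is slightly more careful than the paper's, since you explicitly rule out the degenerate case where an open interval of integer length with integer endpoints would contain one fewer lattice point.
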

\begin{proof}
    Statement \eqref{lem5q1} follows by \eqref{eq513} directly.
    
    \eqref{lem5q2} When $|rn-ms|\geq m$, the first formula $\left(\frac{n}{m}-\frac{s}{r}\right)^2-\frac{1}{r^2}\geq 0$. So there is no $k$ satisfying \eqref{eq513}.

    When $|rn-ms|< m$, the inequality follows by dividing $\left|\frac{n}m-\frac{s}r\right|$ on \eqref{eq513}.
    
    \eqref{lem5q3} By Lemma \ref{lem:desfactor}, there are $\mathsf F\left(\frac{n}{m},\frac{s}{r}\right)$ different $k_i$'s such that $v(\frac{s}r,k_i)$ destabilizes $v(\frac{n}m,0)$. By \eqref{eq:wallelliptic}, the walls of them are different from each other. By Proposition and Definition \ref{propdef:actualwall}, the inequality holds.

    \eqref{lem5q4} By Proposition and Definition \ref{propdef:actualwall}.(3)$''$,  each actual wall of $v(\frac{n}m,0)$ corresponds to at least one destabilizing factor $v(\frac{s}r,k)$ with $r\leq \frac{m}2$. When $\frac{s}r\not \in[0,1]$, $|\frac{s}{r}-\frac{n}{m}|\geq \frac{1}r$. By \eqref{lem5q1}, $\mathsf F\left(\frac{n}{m},\frac{s}{r}\right)=0$. The inequality \eqref{eq56} follows.

    \eqref{lem5q5} Assume $m=am'$ and $r=ar'$, then $m'|n^{\varphi(m)-1}-n^{\varphi(m')-1}$ and $r'|s^{\varphi(r)-1}-s^{\varphi(r')-1}$. Denote by $c=\frac{n^{\varphi(m)-1}-n^{\varphi(m')-1}}{m'}-\frac{s^{\varphi(r)-1}-s^{\varphi(r')-1}}{r'}\in\Z$. 

    For every $k\in\Z$ satisfying \eqref{eq513} with respect to $(\frac{n}{m},\frac{s}{r})$, the integer $c+ka$ satisfies \eqref{eq513} with respect to $(\frac{n}{m'},\frac{s}{r'})$.

    \eqref{lem5q6} An integer $k$ satisfies \eqref{eq513}
    \begin{align*}
        \iff & 
            (rn-ms)^2-m^2<(rn-ms)(r(n-n^{\varphi(m)-1})-m(s-s^{\varphi(r)-1})-kmr)< 0 \\
        \iff & m^2> (rn-ms)(rn^{\varphi(m)-1}-ms^{\varphi(r)-1}+kmr)>(rn-ms)^2.
    \end{align*}
    When $\gcd(m,r)=1=\gcd(n,m)=\gcd(s,r)$, it is clear that $\gcd(rn-ms,mr)=1$. 

    Note that $$(rn-ms)(rn^{\varphi(m)-1}-ms^{\varphi(r)-1})\equiv r^2n^{\varphi(m)}+m^2s^{\varphi(r)}\equiv r^2+m^2(\mod mr).$$
    By SunZi Theorem, 
    \begin{align*}
        & \{a\in\Z\mid a\equiv m^2+r^2(\mod mr), (rn-ms)\vert a\}\\
        =& \{(rn-ms)(rn^{\varphi(m)-1}-ms^{\varphi(r)-1})+k(rn-ms)mr\mid k\in \Z\}.
    \end{align*}
    So \begin{align*}
       & \mathsf F\left(\tfrac{n}{m},\tfrac{s}{r}\right) \\
       =&\#\left\{k\in\Z \;\middle|\; (rn-ms)(rn^{\varphi(m)-1}-ms^{\varphi(r)-1})+k(rn-ms)mr\in \left((rn-ms)^2,m^2\right)\right\}\\
       =&\#\left\{a\in\Z\;\middle|\; a\equiv m^2+r^2(\mod mr), (rn-ms)\vert a, a\in \left((rn-ms)^2,m^2\right) \right\}\\
       =& \#\left\{t\in\Z\;\middle|\; (rn-ms)\vert a=m^2+r^2-tmr \in \left((rn-ms)^2,m^2\right)\right\}\\
       =& \#\left\{t\in\Z_{\geq 1}\;\middle|\; (rn-ms)\vert m^2+r^2-tmr >(rn-ms)^2\right\}.
    \end{align*}
    The claim follows.
\end{proof}

For a pair of positive integers $(m,r)$ satisfying $m>r$, we denote by 

\begin{align*}S_{m,r}:=&\left\{(n,s)\in\Z^2\;\middle|\; 1\leq n\leq m-1,0\leq s\leq r,\gcd(n,m)=\gcd(s,r)=1\right\}.\\
    A_{m,r}:=&\{m^2+r^2-trm\in \Z\cap [1,m^2]\mid t\in \Z\}.\\
    \mathsf G(m,r):=&2\sum_{a\in A_{m,r}} \left\lfloor\tfrac{\tau(a)}2\right\rfloor \text{ and } \mathsf G(m):=\sum_{\substack{1\leq r<m,\;\gcd(r,m)=1}}\mathsf G(m,r),
\end{align*}
where $\tau(n)=\sum_{d\vert n} 1$ is the divisor function.

\begin{Lem}\label{lem:boundonHandF}
    Let $m\geq 2$ be a positive integer. 
    \begin{enumerate}[(1)]
        \item For all positive integers $r,d\in [1,m-1]$, we have
        \begin{align}\label{eq534}
            &\#\left\{(n,s)\in S_{m,r} \mid   rn-ms=d  \right\} \\
             & \begin{cases}
              =1 & \text{if }\gcd(m,r)=1 \text{ and }\gcd(mr,d)=1;  \\
              \leq \varphi(m)/\varphi(\tfrac{m}{a}) & \text{if }\gcd(m,r)=a,\; a\vert d, \text{ and }\gcd(\tfrac{mr}{a^2},\tfrac{d}{a})=1; \\ 
              =0 & \text{otherwise.}
            \end{cases}\notag
        \end{align}
    \item For every $r<m$ with $\gcd(m,r)=1$, 
    \begin{align}\label{eq5132}
       \sum_{(n,s)\in S_{m,r}}\mathsf F\left(\tfrac{n}{m},\tfrac{s}{r}\right)=  \mathsf G(m,r).
    \end{align}
    \item  \begin{align}\label{eqestofHtotal}
        \sum_{\substack{1\leq n\leq m-1\\\gcd(n,m)=1}}\mathsf H\left(\tfrac{n}{m}\right)\leq \varphi(m)+ \sum_{d>1, d\vert m} \frac{\varphi(m)}{\varphi(d)}\mathsf G\left(d\right).
    \end{align}
    \end{enumerate}
\end{Lem}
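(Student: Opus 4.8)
The plan is to prove the three parts in order, since (1) feeds (2) and both feed (3); throughout I write $a=\gcd(m,r)$, $m'=m/a$, $r'=r/a$, so that $\gcd(m',r')=1$.

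For part (1) I would treat this as a lattice-point count on the line $rn-ms=d$ inside the box $[1,m-1]\times[0,r]$ with the two coprimality constraints. The key identity is $rn-ms=a(r'n-m's)$, which immediately forces $a\mid d$ (otherwise the count is $0$), and, setting $d'=d/a$, reduces matters to $r'n-m's=d'$ with $\gcd(m',r')=1$, whose solutions are $(n,s)=(n_0+m'j,\ s_0+r'j)$. I would check that the constraint $1\le n\le m-1$ pins down exactly the $a$ values $j\in\{0,\dots,a-1\}$ and that for each of these the companion $s$ automatically lands in $[0,r]$ (the lower bound $s\ge 0$ comes from $rn-d$ being a non-negative multiple of $m$). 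Since $\gcd(n,m')=1$ and $\gcd(s,r')=1$ hold automatically for the reduced solution, the only surviving conditions are $\gcd(n,a)=1$ and $\gcd(s,a)=1$. When $a=1$ this isolates a single pair, which survives iff $\gcd(mr,d)=1$ by a one-line residue computation; when $a>1$ the admissible $j$ are at most the lifts of $n_0\bmod m'$ to a unit modulo $m=am'$, and that number is $\varphi(m)/\varphi(m/a)$ because the reduction $(\Z/m)^\times\to(\Z/m')^\times$ is a surjective homomorphism with fibres of that size. The residual ``otherwise'' case ($a\mid d$ but $\gcd(mr/a^2,d/a)\neq 1$) forces a prime divisor into either $\gcd(n,m)$ or $\gcd(s,r)$, so the count is $0$.

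For part (2) I would unfold $\mathsf F$ via the closed form of Lemma~\ref{lem:boundonF}(6): for $\gcd(m,r)=1$,
\[
\mathsf F\!\left(\tfrac nm,\tfrac sr\right)=\#\{\,a\in A_{m,r}\ :\ (rn-ms)\mid a,\ (rn-ms)^2<a\,\},
\]
after noting that ``$t\ge 1$ and $a<m^2$'' is exactly ``$a\in A_{m,r}$'' (the value $m^2$ is never attained because $0<r<m$). Then I would swap the order of summation and count triples $(n,s,a)$. The decisive numerical input is that every $a\in A_{m,r}$ is coprime to $mr$, since $a\equiv m^2+r^2\pmod{mr}$ and $\gcd(m^2+r^2,mr)=1$ when $\gcd(m,r)=1$; hence every divisor $D$ of $a$ satisfies $\gcd(D,mr)=1$. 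Part (1), together with the involution $(n,s)\mapsto(m-n,r-s)$ sending $d\mapsto -d$ (which handles negative divisors), then shows that for each divisor $D$ of $a$ with $D^2<a$ there is exactly one $(n,s)\in S_{m,r}$ with $rn-ms=D$. Pairing $D\leftrightarrow a/D$ gives $2\lfloor \tau(a)/2\rfloor$ such divisors (the floor absorbing a possible $D=\sqrt a$), and summing over $a\in A_{m,r}$ reproduces $\mathsf G(m,r)$.

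For part (3) I would start from the pointwise bound in Lemma~\ref{lem:boundonF}(4), sum it over $n$ coprime to $m$ in $[1,m-1]$ (this contributes the leading $\varphi(m)$), and interchange sums to leave $\sum_{r\le \lfloor m/2\rfloor}\sum_{(n,s)\in S_{m,r}}\mathsf F(\tfrac nm,\tfrac sr)$. For each $r$, Lemma~\ref{lem:boundonF}(5) replaces $\mathsf F(\tfrac nm,\tfrac sr)$ by $\mathsf F(\tfrac{n}{m'},\tfrac{s}{r'})$, to which the closed form of (6) applies since $\gcd(m',r')=1$. Running the swap of part (2) in the primed variables, the inner sum becomes $\sum_{a^\ast\in A_{m',r'}}\sum_{D\mid a^\ast,\,D^2<a^\ast}N(D)$ with $N(D)=\#\{(n,s)\in S_{m,r}:r'n-m's=D\}$. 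Here the crucial observation is that $r'n-m's=D$ is equivalent to $rn-ms=aD$, so $N(D)$ is precisely the quantity of part (1) at $d=aD$; as $a\mid aD$ and $\gcd(m'r',D)=1$, the middle case of part (1) yields the uniform bound $N(D)\le \varphi(m)/\varphi(m')$. Factoring this out bounds the inner sum by $\tfrac{\varphi(m)}{\varphi(m')}\mathsf G(m',r')$. Finally I would regroup the sum over $r$ by $d:=m'=m/\gcd(m,r)$: the assignment $r\mapsto (d,\,r'=r/\gcd(m,r))$ is a bijection onto $\{(d,r'):d\mid m,\ \gcd(r',d)=1,\ 1\le r'\le \lfloor d/2\rfloor\}$, and replacing the partial sum $\sum_{r'\le \lfloor d/2\rfloor}\mathsf G(d,r')$ by the full non-negative sum $\mathsf G(d)$ gives the claimed bound $\varphi(m)+\sum_{d\mid m,\,d>1}\tfrac{\varphi(m)}{\varphi(d)}\mathsf G(d)$.

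I expect the main obstacle to be the heart of part (3): recognizing that the ``thick box'' count $N(D)$ need not be computed directly but can be read off from the middle case of part (1) through the identity $rn-ms=a(r'n-m's)$, and then tracking the coprimality bookkeeping so that exactly the factor $\varphi(m)/\varphi(m/a)$ emerges and matches $\varphi(m)/\varphi(d)$ after regrouping. A secondary nuisance is the collection of edge cases — $r=1$ or $r'=1$, perfect-square values of $a$ in the divisor pairing, and the endpoint conventions of the boxes $[1,m-1]$ and $[0,r]$ — which I anticipate are all absorbed by the strict inequality $D^2<a$ and the involution symmetry, but which need care to state cleanly.
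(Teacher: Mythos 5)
Your proposal is correct and takes essentially the same approach as the paper's proof: the lattice-point count with unit lifts under $(\Z/m)^\times\to(\Z/(m/a))^\times$ for (1), the summation swap over divisors of elements of $A_{m,r}$ (using their coprimality with $mr$ and the involution $(n,s)\mapsto(m-n,r-s)$) for (2), and the reduction to the coprime pair $(m',r')$ via Lemma \ref{lem:boundonF}(5) combined with the multiplicity bound $\varphi(m)/\varphi(m/a)$ from (1), regrouped over $d=m/\gcd(m,r)$, for (3). The only cosmetic differences are that you count lifts in the $n$-coordinate where the paper argues with the $s$-coordinate, and in (3) you re-run the divisor swap directly rather than citing (2) for the reduced pair.
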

\begin{proof}
 (1) Assume that $\gcd(m,r)=a$, as  $\gcd(n,m)=\gcd(s,r)=1$, it is  clear that $a|rn-ms$ and $\gcd(\tfrac{mr}{a^2},\tfrac{rn-ms}a)=1$. So the `otherwise' case in the equation holds.

 When $r=1$ and $(n,s)\in S_{m,r}$, it is clear that $n-ms=d$ has a unique solution $(n,s)=(d,0)$. We may assume $r\geq 2$ for the rest of the argument.

Assume that $\gcd(m,r)=1$. By SunZi Theorem, for every $d$ satisfying $\gcd(mr,d)=1$, there is a unique pair $(n,s)\in S_{m,r}$ such that $rn-ms\equiv d (\mod mr)$.  Note that $rn-ms\in [r+m-rm,rm-r]$, if  $rn-ms\equiv d(\mod mr)$ for some $d\in[1,m-1]$, then $rn-ms= d$. The first equality in the equation holds.

Now assume that $\gcd(m,r)=a>1$ and let $m=am'$ and $r=ar'$. For a given number $d=ad'\in[a,m-a]$,  let $(n_0,s_0)$ be the smallest solution to $rn-ms=ad'$ for all $(n,s)\in S_{m,r}$. Then other solutions to $rn-ms=ad'$ are all in the form of $(n_0+tr',s_0+tm')$, where $t=0,1,\dots,a-1$. Note that there are exactly $\tfrac{\varphi(am')}{\varphi(m')}$ many $t$ satisfying $\gcd(s_0+tm',a)=1$, so there are at most $\tfrac{\varphi(am')}{\varphi(m')}$ many pairs of $(n_0+tr',s_0+tm')$ in $S_{m,r}$. The second inequality \eqref{eq534} holds. \\

(2) By Lemma \ref{lem:boundonF}.\eqref{lem5q1}\eqref{lem5q2}, the left-hand side of \eqref{eq5132} equals to 
$$2\cdot\sum_{\substack{(n,s)\in S_{m,r}\; 1\leq rn-ms \leq m-1}}\mathsf F\left(\tfrac{n}{m},\tfrac{s}{r}\right).$$
By (1) and Lemma \ref{lem:boundonF}.\eqref{lem5q6}, this is equal to 
$$2\cdot\sum_{1\leq d \leq m-1,\;\gcd(d,mr)=1}\#\left\{k\in\Z_{\geq 1}\mid d\vert m^2+r^2-kmr >d^2\right\}.$$
As $\gcd(m,r)=1$, it is clear that $\gcd(mr,m^2+r^2-kmr)=1$. In particular, any divisor of $m^2+r^2-kmr$ is coprime to $mr$.

As $m>r$ and $k\geq 1$, we have $m^2+r^2-kmr<m^2$. The equation holds.\\

(3) By Lemma \ref{lem:boundonF}.\eqref{lem5q4}, the left-hand side of \eqref{eqestofHtotal} is less than or equal to 
\begin{align}
 \notag  &  \varphi(m) +\sum_{1\leq r\leq \left\lfloor \tfrac{m}{2}\right\rfloor}\sum_{(n,s)\in S_{m,r}}\mathsf F\left(\tfrac{n}{m},\tfrac{s}{r}\right)\\
   \leq & \varphi(m) +\sum_{1\leq a<m,\;a\vert m}\sum_{\substack{1\leq r<m,\\\gcd(r,m)=a}}\sum_{(n,s)\in S_{m,r}}\mathsf F\left(\tfrac{n}{m},\tfrac{s}{r}\right) \label{eq5113}
\end{align}
Assume $\gcd(m,r)=a$, let $m=am'$ and $r=ar'$. Then by (1), (2), and Lemma \ref{lem:boundonF}.\eqref{lem5q5}, the sum 
\begin{align*}
    \sum_{(n,s)\in S_{m,r}}\mathsf F\left(\tfrac{n}{m},\tfrac{s}{r}\right)& \leq \frac{2\varphi(m)}{\varphi(m')} \sum_{\substack{(n',s')\in S_{m',r'}\\1\leq r'n'-m's' \leq m'-1}}\mathsf F\left(\tfrac{n'}{m'},\tfrac{s'}{r'}\right)=\frac{\varphi(m)}{\varphi(m')} \mathsf G(m',r').
\end{align*}
Substitute this back to \eqref{eq5113}, we get
\begin{align*}
    \varphi(m) +\sum_{1\leq a<m,\;a\vert m}\sum_{\substack{1\leq r<m,\\\gcd(r,m)=a}}\frac{\varphi(m)}{\varphi(m')} \mathsf G(m',r')=\varphi(m) +\sum_{1\leq a<m,\;a\vert m}\frac{\varphi(m)}{\varphi(m')} \mathsf G(m').
\end{align*}
The inequality follows.
\end{proof}

\begin{proof}[Proof of Proposition \ref{prop:estimationofH}]
    We first prove the upper bound. Note that for any $\epsilon>0$, $\tau(n)=o(n^\epsilon)$, it follows that $\mathsf G(m)=m\log m \cdot o(m^\epsilon)=o(m^{1+2\epsilon})$. When $d\vert m$, we have $\frac{\varphi(m)}{\varphi(d)}\leq \frac{m}d$.   The upper bound follows from \eqref{eqestofHtotal}.\\

    We then prove the lower bound. Denote by 
    $$B_m:=\bigcup_{\substack{1\leq r <m^{\frac{1}{4}},\; 0\leq s\leq r-1\\ \gcd(s,r)=1}}\left(\tfrac{s}r,\;\tfrac{s}r+\tfrac{1}{\sqrt{m}}\right).$$
    
    Note that when $r<m^{\frac{1}{4}}$, the intervals $(\frac{s}r,\;\frac{s}r+\frac{1}{\sqrt m})$ do not intersect with each other. So for every $\frac{n}m\in B_m$, there is a unique $\frac{s}r$, denote as $q(\frac{n}m)$, associated with $\frac{n}m$ such that $\tfrac{n}{m}\in\left(\tfrac{s}r,\;\tfrac{s}r+\tfrac{1}{\sqrt{m}}\right)$.  
    
     {\allowdisplaybreaks
     \begin{align*}
     & \sum_{\substack{1\leq n\leq m-1\\\gcd(n,m)=1}}\mathsf H\left(\tfrac{n}{m}\right)\geq  \sum_{\frac{n}m\in B_m}\mathsf H\left(\tfrac{n}{m}\right) 
     \geq \sum_{\frac{n}m\in B_m} 1+\mathsf F\left(\tfrac{n}{m},q\left(\tfrac{n}m\right)\right) & \text{by Lemma \ref{lem:boundonF}.\eqref{lem5q3}}  \\ 
     \geq & \sum_{\frac{n}m\in B_m,\; q(\frac{n}m)=\frac{s}r} \frac{m^2-(rn-ms)^2}{mr|rn-ms|} & \text{by Lemma \ref{lem:boundonF}.\eqref{lem5q2}}\\
     = & \sum_{1\leq r< m^{1/4}}\sum_{\substack{0\leq s\leq r-1\\\gcd(s,r)=1}} \sum_{\substack{0< rn-sm < r\sqrt{m}\\\gcd(n,m)=1}} \frac{m^2-(rn-ms)^2}{mr(rn-ms)}  &  \\
     \geq & \sum_{\substack{1\leq r< m^{1/4}\\ \gcd(m,r)=1}}\sum_{\substack{1\leq d<r\sqrt{m}\\ \gcd(d,mr)=1}} \frac{m^2-d^2}{mrd} & \text{by Lemma \ref{lem:boundonHandF} (1)}  
     \\ \geq & m(\sum_{\substack{1\leq r< m^{1/4}\\ \gcd(m,r)=1}} \sum_{\substack{1\leq d<r\sqrt{m}\\ \gcd(d,mr)=1}} \frac{1}{rd})-\sum_{\substack{1\leq r< m^{1/4}} }\sum_{\substack{1\leq d<r\sqrt{m}}} \frac{d}{mr}\\\geq & m\sum_{\substack{1\leq r< m^{1/4}\\ \gcd(m,r)=1}} \frac{\varphi(rm)}{rm}\frac{1}{r}\ln(r\sqrt m)-\Theta(\sqrt m) \\ \geq & \frac{1}2\varphi(m)\ln m\cdot\sum_{\substack{1\leq r< m^{1/4}\\ \gcd(m,r)=1}} \frac{\varphi(r)}{r^2}-\Theta(\sqrt m)   \\
     \geq & \frac{\varphi(m)^2}{2m}\ln m\cdot \sum_{\substack{1\leq r< m^{1/4}}} \frac{\varphi(r)}{r^2}-\Theta(\sqrt m)=\Theta\left(\tfrac{\varphi(m)^2}{m}(\ln m)^2\right).
          \end{align*}
          }
          The lower bound follows.
\end{proof}

\begin{Rem}\label{rem:lastwords}
    As we have mentioned in Remark \ref{rem:unstablesph}, it is not clear to us whether there exists a spherical bundle that is never slope semistable with respect to any polarization. We are also curious whether other kinds of `counting' are better than the naive one. For example, one may put different weights on spherical bundles. It is not clear to us whether one can build up any meaningful power series from these counting values.
\end{Rem}

\appendix

\section{Spherical twists and standard autoequivalences} \label{appendix}
\begin{center}
\small{by {\sc Genki Ouchi}}
\end{center}

In this appendix, we will prove that the spherical twists of spherical objects are not standard for smooth projective varieties of dimensions greater than one.
Let $X$ be a smooth projective $n$-dimensional variety over $\mathbb{C}$.

\begin{Def}
We define the {\it standard autoequivalence group} $A(X)$ as the subgroup
\[ A(X):=\mathbb{Z}[1] \times \left(\mathrm{Aut}(X) \ltimes \mathrm{Pic}(X) \right) \]
of the autoequivalence group $\mathrm{Aut}(\Db(X))$.
An autoequivalence $\Phi$ of $\Db(X)$ is called {\it standard with respect to} $X$ if $\Phi \in A(X)$ holds.
\end{Def}

The main theorem in Appendix A is as follows.

\begin{Thm}\label{thm:non-standard}
Assume that $n\geq 2$. Let $E$ be a $n$-spherical object in $\Db(X)$ and $k$ be a non-zero integer. Let $Y$ be a smooth projective variety with an exact equivalence $\Psi: \Db(X) \to \Db(Y)$.
There is not a standard autoequivalence $\Phi \in A(Y)$ such that $\mathsf{T}^k_E=\Psi^{-1} \circ \Phi \circ \Psi$ holds in $\mathrm{Aut}(\Db(X))$.
In particular, the autoequivalence $\mathsf{T}^k_E$ is not conjugate to a standard autoequivalence with respect to $X$ in $\mathrm{Aut}(\Db(X))$.
\end{Thm}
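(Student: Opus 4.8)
The plan is to isolate one structural feature common to every standard autoequivalence and then show it fails for every nonzero power of a spherical twist. The feature is being \emph{$t$-exact up to an overall shift} for the standard heart $\Coh$; the failure is detected on the two kinds of objects the spherical twist treats in opposite ways, namely objects of $(E')^{\perp}$ (fixed) and $E'$ itself (shifted by $1-n$).

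First I would reduce to the case $Y=X$. Spherical twists are intrinsic, so for any exact equivalence $\Psi$ one has $\Psi\circ\mathsf T_E\circ\Psi^{-1}=\mathsf T_{\Psi(E)}$ (immediate from $\mathsf T_E(G)=\Cone(E\otimes\RHom(E,G)\to G)$ and exactness of $\Psi$; cf.\ \cite{Seidel-Thomas:braid}). Thus the asserted identity $\mathsf T_E^k=\Psi^{-1}\circ\Phi\circ\Psi$ is equivalent to $\Phi=\mathsf T_{E'}^k$ with $E':=\Psi(E)$. Since $\Psi$ intertwines Serre functors, $\mathsf S_Y(E')=\Psi\,\mathsf S_X(E)=E'[n]$ and $\Hom(E',E'[i])\cong\Hom(E,E[i])$, so $E'$ is again $n$-spherical on the $n$-dimensional $Y$. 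It therefore suffices to prove that for $n\ge2$, $k\ne0$ and any $n$-spherical $E'\in\Db(Y)$ one has $\mathsf T_{E'}^k\notin A(Y)$.

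Then I would set up the dichotomy. Every element of $A(Y)=\Z[1]\times(\Aut(Y)\ltimes\Pic(Y))$ can be written as $[\ell]\circ\Phi_0$, where $\Phi_0\colon F\mapsto g^{*}F\otimes L$ is the geometric part. As pullback along an isomorphism and tensoring by a line bundle are exact, $\Phi_0$ is $t$-exact: it restricts to an autoequivalence $\overline{\Phi_0}$ of $\Coh(Y)$ with $\cH^i\circ\Phi_0\cong\overline{\Phi_0}\circ\cH^i$, so $\{i:\cH^i(\Phi_0F)\ne0\}=\{i:\cH^i(F)\ne0\}$ for all $F$. Suppose now $\mathsf T_{E'}^k=[\ell]\circ\Phi_0$. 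By Proposition \ref{prop:sphericalorth} choose a nonzero $K\in(E')^{\perp}$; since $\mathsf T_{E'}$ fixes $(E')^{\perp}$ we get $\mathsf T_{E'}^k(K)\cong K$, hence $\cH^i(K)\cong\overline{\Phi_0}\,\cH^{i+\ell}(K)$ for all $i$. Comparing the top and bottom nonzero degrees of the finite nonempty set $\{i:\cH^i(K)\ne0\}$ forces $\ell=0$, so $\mathsf T_{E'}^k=\Phi_0$ is $t$-exact.

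Finally I would apply $t$-exactness to $E'$ itself. Writing $S=\{i:\cH^i(E')\ne0\}$, $t$-exactness gives $\{i:\cH^i(\mathsf T_{E'}^kE')\ne0\}=S$, while $\mathsf T_{E'}^k(E')=E'[k(1-n)]$ has nonzero cohomology exactly in degrees $S-k(1-n)$. Hence $S=S-k(1-n)$, i.e.\ the nonempty finite set $S\subset\Z$ is invariant under translation by $k(1-n)\ne0$ (here $n\ge2$, $k\ne0$), which is absurd. This contradiction yields $\mathsf T_{E'}^k\notin A(Y)$ and hence the theorem. The main obstacle, and the reason I route the argument through the $t$-structure rather than through the induced action on $K$-theory or cohomology, is that in the delicate cases (for instance $n$ even with $k$ even, where $\mathsf T_{E'}^2$ already acts trivially on the numerical Grothendieck group, or when $\rk E'=0$) the numerical class of $\mathsf T_{E'}^k(\cO_x)$ coincides with that of a shifted point sheaf, so no purely numerical invariant separates $\mathsf T_{E'}^k$ from a standard autoequivalence; one is forced to use genuine object-level information, which the fixed subcategory $(E')^{\perp}$ and the shifted object $E'[1-n]$ supply.
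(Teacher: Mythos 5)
Your proof is correct, but it takes a genuinely different route from the paper's. The paper argues via categorical entropy: it combines the formula $h_t(\mathsf{T}_E)=0$ for $t\geq 0$ and $(1-n)t$ for $t<0$ (Theorem \ref{thm:spherical_entropy}, which itself relies on \cite[Theorem 1.4]{genkientropy} and needs Proposition \ref{prop:sphericalorth} to remove the hypothesis $E^\perp\neq 0$) with the fact that every standard autoequivalence has affine-linear entropy $h_t(\Phi)=h_0(f^*\circ(\otimes\mathcal{L}))+\ell t$ (Proposition \ref{prop:standard_entropy}); since entropy is a conjugation invariant (\cite[Lem 2.9]{Kik}) and $h_t(\mathsf{T}_E^k)$ has a genuine kink at $t=0$ when $k\neq 0$ and $n\geq 2$, no conjugate of $\mathsf{T}_E^k$ can be standard. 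You bypass the entropy machinery entirely: you reduce to $\Phi=\mathsf{T}_{\Psi(E)}^k$ via the intrinsic nature of spherical twists, observe that the geometric part of a standard autoequivalence is $t$-exact for the standard heart, use the nonzero orthogonal object $K$ from Proposition \ref{prop:sphericalorth} to force the shift $\ell=0$, and then derive the contradiction from $\mathsf{T}_{E'}^k(E')=E'[k(1-n)]$, since a finite nonempty subset of $\Z$ cannot be invariant under a nonzero translation. Notably, both proofs hinge on the same key input, Proposition \ref{prop:sphericalorth}, but deploy it differently: the paper uses it to make the entropy formula unconditional, you use it to kill the shift. Your argument is more elementary and self-contained (no split generators, no limit formula from \cite{DHKK}), whereas the paper's route buys Theorem \ref{thm:spherical_entropy} as a statement of independent interest and gets conjugation-invariance for free. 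Two points you leave implicit but which are standard: that $\dim Y=\dim X$, needed for $\Psi(E)$ to be $n$-spherical on $Y$ in the sense of Definition \ref{def:spherical} (alternatively, you could transport $K$ and the identities $\mathsf{T}_{E'}^k(K)\cong K$, $\mathsf{T}_{E'}^k(E')\cong E'[k(1-n)]$ directly through $\Psi$ and avoid invoking this); and the functor isomorphism $\Psi\circ\mathsf{T}_E\circ\Psi^{-1}\cong\mathsf{T}_{\Psi(E)}$, which is the standard compatibility of twists with equivalences (cf.\ \cite{Huybrechts:FM}).
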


In the proof of Theorem \ref{thm:non-standard}, we use the notion of categorical entropy of autoequivalences of $\Db(X)$ introduced in \cite{DHKK}. 

Let $\Phi$ be an autoequivalence of $\Db(X)$. By \cite[Definition 2.4]{DHKK}, we have the categorical entropy $h_t(\Phi):\mathbb{R} \to [-\infty, \infty)$. By \cite[Definition 2.4, Proof of Lemma 2.5]{DHKK}, we have $h_0(\Phi) \in \mathbb{R}_{\geq 0}$. 
An object $G \in \Db(X)$ is called a {\it split generator} of $\Db(X)$ if $\Db(X)=\langle G\rangle_{\mathrm{thick}}$ holds, where $\langle G\rangle_{\mathrm{thick}}$ is the smallest thick subcategory of $\Db(X)$ containing $G$.
The categorical entropy $h_t(\Phi)$ is computed by the following formula.

\begin{Thm}[{\cite[Theorem 2.6]{DHKK}}]\label{thm:entropy}
\begin{itemize}
\item[$(1)$] For a split generator $G$ of $\Db(X)$, we have
\[ h_t(\Phi)=\lim_{N \to \infty}\frac{1}{N} \log \left( \sum_{m \in \mathbb{Z}} \hom(G, \Phi^N(G)[m])e^{-mt} \right)\]
for any $t \in \mathbb{R}$.
\item[$(2)$] Let $\ell$ be an integer. Then we have
\[ h_t(\Phi[\ell])=h_t(\Phi)+\ell t \]
for any $t \in \mathbb{R}$.
\item[(3)] For a positive integer $k$, we have
\[ h_t(\Phi^k)=kh_t(\Phi). \]
\end{itemize}
\end{Thm}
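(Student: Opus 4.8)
The plan is to work throughout with the \emph{complexity} (categorical mass function) $\delta_t(E,F)$ of \cite{DHKK}, defined for $E,F\in\Db(X)$ as the infimum of $\sum_i e^{n_i t}$ over all finite sequences of cones that build, out of the shifts $E[n_i]$, an object of which $F$ is a direct summand; by definition $h_t(\Phi)=\limsup_N \tfrac1N\log\delta_t(G,\Phi^N G)$ for a split generator $G$. The first point I would settle is that this $\limsup$ is a genuine limit. Composing two filtrations gives submultiplicativity $\delta_t(E_1,E_3)\le\delta_t(E_1,E_2)\,\delta_t(E_2,E_3)$, and since $\Phi$ is an exact equivalence it preserves complexity, so $\delta_t(G,\Phi^{M+M'}G)\le\delta_t(G,\Phi^M G)\,\delta_t(G,\Phi^{M'}G)$. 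Thus $N\mapsto\log\delta_t(G,\Phi^N G)$ is subadditive and Fekete's lemma yields the limit; the same submultiplicativity, applied to a second split generator $G'$ through the fixed factors $\delta_t(G,G')$ and $\delta_t(G',G)$, shows the limit is independent of $G$.

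For part (1) I would prove two matching inequalities between $\delta_t$ and the Poincar\'e function $P_t(E,F):=\sum_m\hom(E,F[m])e^{-mt}$. The lower bound is soft: applying $\Hom(G,-)$ to an exact triangle $A\to B\to C$ gives a long exact sequence, hence $\hom(G,B[m])\le\hom(G,A[m])+\hom(G,C[m])$ and $P_t(G,B)\le P_t(G,A)+P_t(G,C)$; running this through any filtration realizing $\delta_t(G,F)$ yields $P_t(G,F)\le P_t(G,G)\,\delta_t(G,F)$, so $\delta_t(G,\Phi^N G)\ge P_t(G,\Phi^N G)/P_t(G,G)$ and $h_t(\Phi)\ge\lim_N\tfrac1N\log P_t(G,\Phi^N G)$. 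For the reverse inequality I would pass to the dg-algebra $A=\RHom_X(G,G)$ and the equivalence $\RHom_X(G,-)\colon\Db(X)\xrightarrow{\sim}\mathrm{Perf}(A)$, under which $F$ corresponds to the perfect module $M=\RHom_X(G,F)$ with $\sum_m\dim H^m(M)e^{-mt}=P_t(G,F)$ and $\delta_t(G,F)$ computed from free resolutions of $M$. Because $X$ is smooth and proper, $A$ is a smooth proper dg-algebra (equivalently, by Rouquier--Orlov strong generation $\Db(X)$ is generated in a bounded number of steps), so there are a uniform constant $C=C(X,G,t)$ and a bounded length $d$ for which every perfect $M$ admits a free resolution of length $\le d$ whose total weighted rank is at most $C\,P_t(G,F)$; this gives $\delta_t(G,F)\le C\,P_t(G,F)$ with $C$ independent of $F$. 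Taking $\tfrac1N\log$ kills the constant and produces the matching upper bound, completing part (1) (and re-proving that the Hom-limit exists).

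Parts (2) and (3) then follow formally from the formula in (1). For (2), since $(\Phi[\ell])^N(G)=\Phi^N(G)[\ell N]$, reindexing $m\mapsto m-\ell N$ factors $e^{\ell N t}$ out of the weighted sum, so $\tfrac1N\log\sum_m\hom(G,(\Phi[\ell])^N(G)[m])e^{-mt}=\ell t+\tfrac1N\log P_t(G,\Phi^N G)\to \ell t+h_t(\Phi)$. For (3), writing $a_M:=\log P_t(G,\Phi^M G)$, part (1) and the first paragraph give that $a_M/M\to h_t(\Phi)$; applying (1) to the endofunctor $\Phi^k$ gives $h_t(\Phi^k)=\lim_N a_{kN}/N=k\lim_N a_{kN}/(kN)$, and since $a_M/M$ converges the subsequence $a_{kN}/(kN)$ shares the same limit, so $h_t(\Phi^k)=k\,h_t(\Phi)$.

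The main obstacle is the upper bound in part (1): the soft direction only controls $P_t$ by $\delta_t$, whereas the converse requires a \emph{uniform} (in $F$) inequality $\delta_t(G,F)\le C\,P_t(G,F)$. This is precisely where smooth-properness of $X$ is used, through the finiteness of weighted free resolutions of perfect $A$-modules; the delicate point is that the multiplicities occurring in a bounded-length filtration must be dominated by the cohomology dimensions $\sum_m\dim H^m(M)$ up to a constant depending only on $A$, which is the substance of strong generation and the step I would spend the most care on.
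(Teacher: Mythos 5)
Your proposal is correct, but note that the paper itself does not prove this statement: Theorem \ref{thm:entropy} is quoted from \cite[Theorem 2.6]{DHKK}, and the only proof content the appendix supplies is the remark that $(2)$ and $(3)$ are easily deduced from $(1)$, with pointers to \cite[Lem 2.7 (v)]{Kik} and \cite[Section 2.2]{DHKK}. Your deductions of $(2)$ and $(3)$ --- reindexing $m \mapsto m-\ell N$ via $(\Phi[\ell])^N = \Phi^N[\ell N]$ (valid because $\Phi$ is exact, hence commutes with shifts), and extracting the subsequence $a_{kN}/(kN)$ from the convergent sequence $a_M/M$ --- are exactly those easy deductions, so on this point you match the paper. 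For $(1)$, what you reconstruct is in substance the original argument of \cite{DHKK}: submultiplicativity of $\delta_t$ together with $\Phi$-invariance makes $N \mapsto \log\delta_t(G,\Phi^N G)$ subadditive, Fekete gives a genuine limit, the long-exact-sequence estimate gives the soft bound $P_t(G,F) \le P_t(G,G)\,\delta_t(G,F)$, and the uniform converse $\delta_t(G,F) \le C\,P_t(G,F)$ is where smooth-properness enters. One attribution in your final paragraph deserves correction: bounded generation time (strong generation in the sense of Rouquier--Orlov) by itself bounds only the number of cone steps, not the multiplicities, so it is not "the substance" of the uniform bound. What actually does the work is smoothness: the diagonal bimodule $A$ is perfect over $A\otimes A^{\mathrm{op}}$, so $M = A\otimes_A M$ is built in a bounded number of steps from $A\otimes_{\C} M$, and over a field one has $A\otimes_{\C} M \simeq \bigoplus_m A[-m]^{\oplus \dim H^m(M)}$, which is precisely how the factor $P_t(G,F)$ appears with a constant depending only on $(A,t)$; this is the dg reformulation of the resolution-of-the-diagonal argument in \cite{DHKK}. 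With that step made precise, your proof is complete and agrees with the cited source rather than with any argument in the paper under review.
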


In Theorem \ref{thm:entropy}, the statements $(2)$ and $(3)$ are easily deduced from $(1)$.
See also \cite[Lem 2.7 (v)]{Kik} and \cite[Section 2.2]{DHKK}. By \cite[Lem 2.9]{Kik}, if autoequivalences $\Phi_1, \Phi_2$ of $\Db(X)$ are conjugate in $\mathrm{Aut}\Db(X)$, we have $h_t(\Phi_1)=h_t(\Phi_2)$ for any $t \in \mathbb{R}$. To prove Theorem \ref{thm:non-standard}, we compare $h_t(\mathsf{T}_E)$ with $h_t(\Phi)$ for a standard autoequivalence $\Phi \in A(Y)$, where $Y$ is a Fourier--Mukai partner of $X$. 
The following is the explicit formula of $h_t(\mathsf{T}_E)$. 

\begin{Thm}\label{thm:spherical_entropy}
The equality
\[ h_t(\mathsf{T}_E)=\left\{
\begin{array}{ll}
0 & (t\geq0) \\
(1-n)t & (t < 0)
\end{array}
\right.\]
holds.
\end{Thm}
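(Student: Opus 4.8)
The plan is to read off $h_t(\mathsf{T}_E)$ from the split-generator formula of Theorem \ref{thm:entropy}(1), establishing matching upper and lower bounds, where the lower bound is made transparent by choosing the generator to contain the right summands. The computational engine is the shift identity
\[
\RHom(E,\mathsf{T}_E^N(F))\cong \RHom(E,F)[(1-n)N]\qquad(N\geq 0),
\]
which follows by induction from $\mathsf{T}_E^{-1}(E)\cong E[n-1]$ (a consequence of $\mathsf{T}_E(E)=E[1-n]$) together with the fact that $\mathsf{T}_E$ is an equivalence: $\RHom(E,\mathsf{T}_E F)=\RHom(\mathsf{T}_E^{-1}E,F)=\RHom(E,F)[1-n]$. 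Feeding this into the defining triangles $E\otimes\RHom(E,\mathsf{T}_E^{j}G)\to \mathsf{T}_E^{j}G\to \mathsf{T}_E^{j+1}G$ exhibits $\mathsf{T}_E^N(G)$ as the top of a tower built from $G$ whose successive cones are the shifted pieces $E\otimes\RHom(E,G)[(1-n)j+1]$, $0\le j\le N-1$.

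For the upper bound I would fix an arbitrary split generator $G$, apply $\RHom(G,-)$ to this tower, and use subadditivity of $\hom$ along the long exact sequences to get
\[
\sum_{m}\hom(G,\mathsf{T}_E^N(G)[m])e^{-mt}\ \le\ C(t)+e^{t}A(t)B(t)\sum_{j=0}^{N-1}e^{(1-n)jt},
\]
where $A(t)=\sum_i \hom(G,E[i])e^{-it}$, $B(t)=\sum_q\hom(E,G[q])e^{-qt}$ and $C(t)=\sum_m\hom(G,G[m])e^{-mt}$ are positive Laurent polynomials in $e^{-t}$ (finiteness and positivity use smoothness, properness, and that $G$ generates, so $\RHom(G,E),\RHom(E,G)\neq 0$). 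Since $1-n<0$, the geometric sum stays bounded in $N$ for $t>0$, grows linearly for $t=0$, and is dominated by $e^{(1-n)Nt}$ for $t<0$; applying $\tfrac1N\log(\,\cdot\,)$ gives $h_t(\mathsf{T}_E)\le 0$ for $t\ge 0$ and $h_t(\mathsf{T}_E)\le(1-n)t$ for $t<0$.

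The main obstacle is the lower bound, because the Euler form is blind to this growth: in $K$-theory the classes $[E\otimes\RHom(E,G)[(1-n)j]]$ enter with alternating signs, so $\chi(G,\mathsf{T}_E^N(G))$ remains bounded and the exponential growth for $t<0$ comes entirely from non-cancelling high-degree $\Ext$'s that a naive long-exact-sequence estimate would lose. To capture it cleanly I would instead choose the generator to contain the fixed summands, taking $G=G_0\oplus E\oplus K$ with $G_0$ any split generator and $0\ne K\in E^\perp$ supplied by Proposition \ref{prop:sphericalorth}. Since $\mathsf{T}_E^N$ is additive, $\hom(G,\mathsf{T}_E^N(G)[m])$ bounds below the contribution of each summand. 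The summand $E$ gives $\mathsf{T}_E^N(E)=E[(1-n)N]$, so $\hom(E,\mathsf{T}_E^N(E)[m])\ne 0$ exactly at $m=(n-1)N$ and $m=(n-1)N+n$; the term at $m=(n-1)N$ alone contributes $e^{-(n-1)Nt}=e^{(1-n)Nt}$, yielding $h_t(\mathsf{T}_E)\ge(1-n)t$ for all $t$. The summand $K$ satisfies $\mathsf{T}_E^N(K)=K$, contributing the constant $\hom(K,K)\ge 1$ at $m=0$, hence $h_t(\mathsf{T}_E)\ge 0$ for all $t$.

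Combining the four estimates gives $\max\{0,(1-n)t\}\le h_t(\mathsf{T}_E)\le\max\{0,(1-n)t\}$, which is precisely the asserted formula (the two branches agree and equal $0$ at $t=0$). The only technical care needed is in verifying the shift identity and in bookkeeping the long exact sequences for the upper bound; once $G$ is enlarged to contain $E$ and a nonzero object of $E^\perp$, the lower bound — the part that the Euler characteristic cannot detect — becomes immediate.
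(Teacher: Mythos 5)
Your proof is correct, and in substance it is the route the paper takes: the paper's proof of Theorem \ref{thm:spherical_entropy} is a one-line citation of \cite[Theorem 1.4]{genkientropy} (the entropy formula for $\mathsf{T}_E$, whose proof is exactly your tower-plus-subadditivity upper bound together with the lower bounds extracted from the generator summands $E$ and a fixed object of $E^\perp$) combined with Proposition \ref{prop:sphericalorth}, which supplies the nonzero $K\in E^\perp$ with $\mathsf{T}_E(K)\cong K$ — precisely the role $K$ plays in your argument. The only difference is that you unpack the cited theorem's proof instead of invoking it as a black box.
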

\begin{proof}
This is a direct consequence of \cite[Theorem 1.4]{genkientropy} and Proposition \ref{thm:mainEperp}.
\end{proof}

Next, we compute the categorical entropy of standard autoequivalences.
Let $Y$ be a smooth projective variety with an exact equivalence $\Psi: \Db(X) \to \Db(Y)$. 
Take $\Phi \in A(Y)$. There is an automorphism $f$ of $Y$, a line bundle $\mathcal{L}$ on $Y$ and an integer $\ell$ such that 
$ \Phi= f^* \circ (\otimes \mathcal{L})[\ell]$. Note that $h_t(\Psi^{-1} \circ \Phi \circ \Psi)=h_t(\Phi)$ holds for any $t \in \mathbb{R}$. We have the following formula.

\begin{Prop}\label{prop:standard_entropy}
The equality 
\[ h_t(\Phi)=h_0(f^* \circ (\otimes \mathcal{L}))+\ell t \]
holds for any $t \in \mathbb{R}$.
\end{Prop}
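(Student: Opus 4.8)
The plan is to split off the shift $[\ell]$ and exploit that the remaining part $\Phi_0 := f^* \circ (\otimes \mathcal{L})$ is exact for the standard $t$-structure, hence does not enlarge cohomological amplitudes. Writing $\Phi = \Phi_0[\ell]$, Theorem \ref{thm:entropy}(2) immediately gives $h_t(\Phi) = h_t(\Phi_0) + \ell t$, so the whole statement reduces to the claim that $h_t(\Phi_0)$ is independent of $t$, that is, $h_t(\Phi_0) = h_0(\Phi_0)$ for every $t \in \mathbb{R}$.

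To establish this, I would first choose a split generator $G$ of $\Db(Y)$ that is \emph{locally free} — for instance a finite direct sum of powers of an ample line bundle, which split-generates $\Db(Y)$ by a standard result. By Theorem \ref{thm:entropy}(1), $h_t(\Phi_0)$ may be computed with this particular $G$. Since $f^*$ and $\otimes \mathcal{L}$ are exact, preserve $\Coh(Y)$, and preserve local freeness, each iterate $\Phi_0^N(G)$ is again a locally free sheaf concentrated in degree $0$.

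The key consequence is a uniform-in-$N$ bound on the cohomological range. For locally free sheaves one has $\hom(G, \Phi_0^N(G)[m]) = \Ext^m(G, \Phi_0^N(G)) = H^m(Y, G^\vee \otimes \Phi_0^N(G))$, which vanishes unless $0 \le m \le n = \dim Y$. Thus the inner sum in Theorem \ref{thm:entropy}(1) runs over the fixed finite set $\{0, \dots, n\}$, regardless of $N$. On this range the weight $e^{-mt}$ lies between the two positive constants $\min(1, e^{-nt})$ and $\max(1, e^{-nt})$, so, writing $\Sigma_N := \sum_m \hom(G, \Phi_0^N(G)[m])$ for the $t=0$ sum,
\[ \min(1, e^{-nt})\,\Sigma_N \;\le\; \sum_{m} \hom(G, \Phi_0^N(G)[m])\, e^{-mt} \;\le\; \max(1, e^{-nt})\,\Sigma_N. \]
Applying $\tfrac{1}{N}\log(-)$ and letting $N \to \infty$, the two constant factors contribute $0$ to the limit, whence $h_t(\Phi_0) = h_0(\Phi_0)$ for all $t$. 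Combined with the reduction above, this yields $h_t(\Phi) = h_0(\Phi_0) + \ell t = h_0(f^* \circ (\otimes \mathcal{L})) + \ell t$, as claimed.

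The only substantive point is precisely this uniform bound on the amplitude of $\hom(G, \Phi_0^N(G)[\bullet])$: it is what collapses the $t$-dependence of $h_t(\Phi_0)$, and it rests on taking the split generator to be a sheaf (indeed locally free) so that the $t$-exactness of $\Phi_0$ keeps every $\Phi_0^N(G)$ in a single degree. By contrast, for a genuinely non-standard autoequivalence such as $\mathsf{T}_E$ the amplitude grows linearly in $N$, which is exactly why $h_t(\mathsf{T}_E)$ has a nontrivial slope in $t$ (Theorem \ref{thm:spherical_entropy}); the remaining comparison of slopes is what will ultimately separate $\mathsf{T}_E^k$ from $A(Y)$ in Theorem \ref{thm:non-standard}.
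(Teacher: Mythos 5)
Your proof is correct and follows essentially the same route as the paper: split off the shift via Theorem \ref{thm:entropy}(2), take the locally free split generator $\bigoplus_{i}\mathcal{O}_Y(i)$, and use that $(f^*\circ(\otimes\mathcal{L}))^N(G)$ remains a sheaf to conclude that the entropy of the shift-free part is constant in $t$. The only difference is that where the paper invokes \cite[Lemma 2.10]{DHKK} for this constancy, you prove it directly via the uniform bound $0\le m\le n$ on the cohomological amplitude of $\hom(G,\Phi_0^N(G)[\bullet])$, which is precisely the content of that cited lemma.
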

\begin{proof}
Fix $t \in \mathbb{R}$.
By Theorem \ref{thm:entropy} (2), we have 
\[ h_t(\Phi)=h_t(f^* \circ (\otimes \mathcal{L}))+\ell t. \]

Consider the object
\[ G:=\bigoplus_{i=0}^n\mathcal{O}_Y(i),\]
where $\mathcal{O}_Y(1)$ is a very ample line bundle on $Y$. 
Then $G$ is a split generator of $\Db(Y)$. 
For any positive integer $N$, the objects $G$ and $(f^* \circ (\otimes \mathcal{L}))^N(G)$ are coherent sheaves on $Y$. By 
\cite[Lemma 2.10]{DHKK}, the categorical entropy $h_t(f^* \circ (\otimes \mathcal{L}))$ is constant with respect to $t$. Hence, we obtain 
\[ h_t(f^* \circ (\otimes \mathcal{L}))=h_0(f^* \circ (\otimes \mathcal{L})). \]
\end{proof}

Now, we give the proof of Theorem \ref{thm:non-standard}.
It is enough to prove the case of $k>0$.
Assume that there is a standard autoequivalence $\Phi \in A(Y)$ such that $\mathsf{T}^k_E=\Psi^{-1} \circ \Phi \circ \Psi$ holds in $\mathrm{Aut}(\Db(X))$. By Theorem \ref{thm:entropy} (3), we have  $kh_t(\mathsf{T}_E)=h_t(\mathsf{T}^k_E)=h_t(\Phi)$
for any $t \in \mathbb{R}$. By Theorem \ref{thm:spherical_entropy} and Proposition \ref{prop:standard_entropy}, this is a contradiction since $n \geq 2$. 

\begin{Rem}
Let $C$ be a smooth projective curve.
For a closed point $x \in C$, the skyscraper sheaf $\mathcal{O}_x$ is a $1$-spherical object in $\Db(C)$. As \cite[Example 8.10, i)]{Huybrechts:FM}, we have $\mathsf{T}_{\mathcal{O}_x} \simeq \otimes \mathcal{O}_C(x)$. In particular, the functor $\mathsf{T}_{\mathcal{O}_x} \in A(C)$ holds.
\end{Rem}

\bibliography{all}                      

\newcommand{\etalchar}[1]{$^{#1}$}
\begin{thebibliography}{DHKK14}

\bibitem[BB17]{K3Pic1}
Arend Bayer and Tom Bridgeland.
\newblock Derived automorphism groups of {K}3 surfaces of {P}icard rank 1.
\newblock {\em Duke Math. J.}, 166(1):75--124, 2017.

\bibitem[BM14]{BM:walls}
Arend Bayer and Emanuele Macr{\`{\i}}.
\newblock {M}{M}{P} for moduli of sheaves on {K}3s via wall-crossing: nef and
  movable cones, {L}agrangian fibrations.
\newblock {\em Invent. Math.}, 198(3):505--590, 2014, arXiv:1301.6968.

\bibitem[CGL{\etalchar{+}}23]{shengxuannodal}
Warren Cattani, Franco Giovenzana, Shengxuan Liu, Pablo Magni, Luigi
  Martinelli, Laura Pertusi, and Jieao Song.
\newblock Kernels of categorical resolutions of nodal singularities.
\newblock {\em Rend. Circ. Mat. Palermo, II. Ser}, 2023.

\bibitem[DHKK14]{DHKK}
G.~Dimitrov, F.~Haiden, L.~Katzarkov, and M.~Kontsevich.
\newblock Dynamical systems and categories.
\newblock {\em Contemporary Mathematics}, 621:133--170, 2014.

\bibitem[HL10]{HL:Moduli}
Daniel Huybrechts and Manfred Lehn.
\newblock {\em The geometry of moduli spaces of sheaves}.
\newblock Cambridge Mathematical Library. Cambridge University Press,
  Cambridge, second edition, 2010.

\bibitem[Huy06]{Huybrechts:FM}
D.~Huybrechts.
\newblock {\em Fourier-{M}ukai transforms in algebraic geometry}.
\newblock Oxford Mathematical Monographs. The Clarendon Press Oxford University
  Press, Oxford, 2006.

\bibitem[Huy16]{HuybrechtsK3book}
Daniel Huybrechts.
\newblock {\em Lectures on {K}3 surfaces}, volume 158 of {\em Cambridge Studies
  in Advanced Mathematics}.
\newblock Cambridge University Press, Cambridge, 2016.

\bibitem[Kik17]{Kik}
Kohei Kikuta.
\newblock On entropy for autoequivalences of the derived category of curves.
\newblock {\em Advances in Mathematics}, 308:699--712, 2017.

\bibitem[Kov94]{Kovacs:K3}
S{\'a}ndor~J. Kov{\'a}cs.
\newblock The cone of curves of a {$K3$} surface.
\newblock {\em Math. Ann.}, 300(4):681--691, 1994.

\bibitem[KS23]{kuznetsov2023categorical}
Alexander Kuznetsov and Evgeny Shinder.
\newblock Categorical absorptions of singularities and degenerations, 2023,
  arXiv: 2207.06477.

\bibitem[Kul89]{Kuleshov:spherical}
S.~A. Kuleshov.
\newblock A theorem on the existence of exceptional bundles on surfaces of type
  {$K3$}.
\newblock {\em Izv. Akad. Nauk SSSR Ser. Mat.}, 53(2):363--378, 1989.

\bibitem[Mor84]{Morrison:K3}
D.~R. Morrison.
\newblock On {$K3$} surfaces with large {P}icard number.
\newblock {\em Invent. Math.}, 75(1):105--121, 1984.

\bibitem[Ouc20]{genkientropy}
Genki Ouchi.
\newblock On entropy of spherical twists.
\newblock {\em Proc. Amer. Math. Soc.}, 148(3):1003--1014, 2020.
\newblock With an appendix by Arend Bayer.

\bibitem[ST01]{Seidel-Thomas:braid}
Paul Seidel and Richard Thomas.
\newblock Braid group actions on derived categories of coherent sheaves.
\newblock {\em Duke Math. J.}, 108(1):37--108, 2001, arXiv:math/0001043.

\end{thebibliography}
\bibliographystyle{halpha}  
\end{document}